\def\beq{\begin{equation}}
\def\eeq{\end{equation}}
\def\ba{\begin{array}}
\def\ea{\end{array}}
\def\S{\mathbb S}
\def\R{\mathbb R}
\def\N{\mathbb N}
\def\la{\langle}
\def\ra{\rangle}
\def \ds{\displaystyle}
\def \vs{\vspace*{0.1cm}}
\def\slashii#1{\setbox0=\hbox{$#1$}             % set a box for #1
   \dimen0=\wd0                                 % and get its size
   \setbox1=\hbox{\sl/} \dimen1=\wd1            % get size of /
   \ifdim\dimen0>\dimen1                        % #1 is bigger
      \rlap{\hbox to \dimen0{\hfil\sl/\hfil}}   % so center / in box
      #1                                        % and print #1
   \else                                        % / is bigger
      \rlap{\hbox to \dimen1{\hfil$#1$\hfil}}   % so center #1
      \hbox{\sl/}                               % and print /
   \fi}                                         %
\def\slashiii#1{\setbox0=\hbox{$#1$}#1\hskip-\wd0\hbox to\wd0{\hss\sl/\/\hss}}
\newcommand{\C}{{\mathbf C}}
\newtheorem{thm}{Theorem}[section]
\newtheorem{lm}[thm]{Lemma}
\newtheorem{prop}[thm]{Proposition}
\newtheorem{conj}[thm]{Conjecture}
\theoremstyle{definition}
\theoremstyle{remark}
\begin{document}

\title{The Super-Toda System and Bubbling of Spinors}

\author[Jost]{J\"urgen Jost}
\address{Max Planck Institute for Mathematics in the Sciences\\ Inselstrasse 22\\ 04103 Leipzig, Germany}
%\address{Department of Mathematics\\ Leipzig University\\ 04081 Leipzig, Germany}
\email{jost@mis.mpg.de}

\author[Zhou]{Chunqin Zhou}
\address{School of Mathematical Sciences, Shanghai Jiao Tong University\\ 800 Dongchuan Road \\ Shanghai, 200240 \\China}
\email{cqzhou@sjtu.edu.cn}

\author[Zhu]{Miaomiao Zhu}
\address{School of Mathematical Sciences, Shanghai Jiao Tong University\\ 800 Dongchuan Road \\ Shanghai, 200240 \\China}
\email{mizhu@sjtu.edu.cn}

\thanks{The second author was supported partially by NSFC of China (No. 11271253). The third author was supported in part by NSFC of China (No. 11601325).}

\date{\today}
\begin{abstract}
We introduce the super-Toda system on Riemann surfaces and study the blow-up analysis for a sequence of solutions to the super-Toda system on a closed Riemann surface with uniformly bounded energy. In particular, we show the energy identities for the spinor parts of a blow-up sequence of solutions for which there are possibly four types of bubbling solutions, namely, finite energy solutions of the  super-Liouville equation or the super-Toda system defined on $\R^2$ or on $\R^2\setminus\{0\}$. This is achieved by showing some new energy gap results for the spinor parts of these four types of bubbling solutions.
\end{abstract}

\keywords{super-Toda system, blow-up, energy identity,  concentration compactness}

\maketitle

\section{Introduction}

The supersymmetric two-dimensional Toda lattice was introduced by Olshanetsky \cite{O}. It is a natural generalization or combination  of  the supersymmetric extension of the Liouville equation \cite{P1,P2} on one hand and the standard Toda lattice on the other hand. Olshanetsky \cite{O} studied it from the perspective of Lie algebras and integrable systems. In this paper, we consider a version from the perspective of geometric analysis. Our approach is more in some sense narrower  than that of \cite{O}, because we only consider the Lie group $SU(N+1)$ for $N=2$, but here, we are interested in the analytical and not so much in the algebraic aspects of the theory, and for that, the choice of Lie group is not so important. Our setting is also different from that of \cite{O}, insofar as we set up the theory with commuting variables only, instead of considering anticommuting spinor fields. This offers the crucial advantage that we can apply methods from nonlinear partial differential equations (see \eqref{0-4} or \eqref{equ-1} below) and the calculus of variations (see \eqref{cv}), and exploring those connections is the main purpose of this work. Remarkably, and perhaps surprisingly, the symmetries of the system are not much affected. We lose the supersymmetry between the commuting and the anticommuting variables, but, for instance, the more important conformal symmetry is preserved. Therefore, here, we do not discuss the extensive physics literature any further, but let us mention at least  some  references  about super-Liouville
equations, \cite{ARS, FH}, that provide some further background and motivation. (The super-Liouville equations can be seen as super-Toda equations for $SU(N+1)$ for $N=1$.) Thus, we concentrate on the analytical aspects, and in that direction, our main new contribution seems to be a bubbling analysis for spinors.

The two-dimensional field equations of mathematical physics provide a rich source of deep analytical problems that have lead to new methods and insights that have stimulated much subsequent work, see for instance \cite{CY} or \cite{T}. The present work aims to proceed further in this direction, and, in particular, explore phenomena that arise from coupling different fields in a ``physically correct'' manner.

Let us now describe the system in more technical terms.
Let $(M,g)$ be a  closed Riemann surface with a fixed spin structure. We consider the following coupled system of real-valued functions $\{u_k\}_{k=1}^N$ and spinors $\{\psi_k\}_{k=1}^N$ on $M$,
\begin{equation}\label{0-4}
\left\{
\begin{array}{rcl}
-\Delta u_k &=& \sum_{i=1}^{N}a_{ki}(e^{2u_i}-\frac 12e^{u_i}\left\langle \psi_i ,\psi_i
\right\rangle )-K_g,\\
\slashiii{D}\psi_k &=&\ds  -e^{u_k}\psi_k,\\
\end{array}
\right.
\end{equation}
for $k=1,2,\cdots,N$.  Here $(a_{ij})_{N\times N}$ is the Cartan matrix of $SU(N+1)$, i.e.
 \[(a_{ij})=\left(\begin{matrix}2& -1& 0 & \cdots & \cdots & 0\\ -1&2 & -1 & 0 &\cdots & 0\\ 0& -1&2&-1&\cdots & 0\\ \cdots & \cdots &\cdots &\cdots &\cdots &\cdots \\0&\cdots &\cdots &-1&2&-1\\
 0&\cdots&\cdots&0&-1&2\end{matrix}\right),
\]
$K_g$ is the Gaussian curvature of $M$ and $\slashiii{D}$ is the Dirac operator. This system couples Liouville type equations and Dirac type equations in a rather natural way. We call (\ref{0-4}) the {\em super-Toda system}.

When $\psi_k$ and $K_g$ vanish, we obtain the classical two-dimensional $SU(N+1)$ Toda system
\begin{equation*}
-\Delta u_k = \sum_{i=1}^{N}a_{ki}e^{2u_i},
\end{equation*}
for $k=1,2,\cdots,N$, which has been extensively studied in the last two decades, see e.g. \cite{JW2, JW, JLW, Lu, LWY, LL, MN, OS} etc.

When $N=1$, we have
\begin{equation}\label{equ-sl}
\left\{
\begin{array}{rcl}
-\Delta u &=& \ds\vs 2e^{2u}-e^{u}\left\langle \psi ,\psi
\right\rangle  -K_g,  \\
\slashiii{D}\psi &=&\ds  -e^{u}\psi,\\
\end{array}
\right.
\end{equation}
%By the conformal invariance of the  Dirac operator (see e.g.  Proposition 3.1 in \cite{JWZ}), setting
%\begin{equation}\label{transform}
%w=\frac u2-\frac 12\ln2, \quad \phi=\frac 1{2^{\frac 14}}\psi,
%\end{equation}
%one can easily check that \eqref{equ-sl} is equivalent to the following form of the super-Liouville equation
%\begin{equation*}
%\left\{
%\begin{array}{rcl}
%-\Delta w &=& \ds\vs 2e^{2w}-e^{w}\left\langle \phi ,\phi
%\right\rangle  -K_g,  \\
%\frac{1}{\sqrt{2}}\slashiii{D}\phi &=&\ds  -e^{w}\phi,\\
%\end{array}
%\right.
%\end{equation*}
which is a natural  extension of the Liouville equation by a spinorial field  introduced in \cite{JWZ} and further explored in \cite{JWZZ1, JWZZ2, JZZ1, JZZ2, JZZ3}. We will also call \eqref{equ-sl} the super-Liouville equation.

In the present paper, we shall consider the case of $N=2$, where the coupled system can be written as
\begin{equation}
\left\{
\begin{array}{rcl}
-\Delta u_1 &=& \ds\vs 2e^{2u_1}-e^{2u_2}-e^{u_1}\left\langle \psi_1 ,\psi_1
\right\rangle +\frac 12 e^{u_2}\left\langle \psi_2 ,\psi_2
\right\rangle -K_g,\\
-\Delta u_2 &=& \ds\vs 2e^{2u_2}-e^{2u_1}-e^{u_2}\left\langle \psi_2 ,\psi_2
\right\rangle +\frac 12 e^{u_1}\left\langle \psi_1 ,\psi_1
\right\rangle-K_g,\\
\slashiii{D}\psi_1 &=&\ds  -e^{u_1}\psi_1,   \\
\slashiii{D}\psi_2 &=&\ds  -e^{u_2}\psi_2,   \\
\end{array}
\right.   \label{equ-1}
\end{equation}

%\begin{equation}
%\left\{
%\begin{array}{rcl}
%-\Delta u_k &=& \sum_{i=1}^{2}a_{ki}(e^{u_i}-\frac 12e^{\frac {u_i}2}\left\langle \psi_i ,\psi_i
%\right\rangle )-R_g,\\
%\slashiii{D}\psi_k &=&\ds  -e^{\frac {u_k}2}\psi_k,\\
%\end{array}
%\right.   \label{equ-2}
%\end{equation}

\noindent for  the Cartan matrix of $SU(3)$
\[(a_{ij})=\left(\begin{matrix}2& -1\\ -1&2 \end{matrix}\right).
\]
The system (\ref{equ-1}) is the Euler-Lagrange system of the functional
\begin{eqnarray}\nonumber
E\left( u_1,u_2,\psi_1,\psi_2 \right) &=&\int_{M}\left \{\frac 13(
\left| \nabla u_1\right| ^2+\left| \nabla u_2\right| ^2+\nabla u_1\nabla u_2)+ K_g(u_1+u_2) \right \} dv \\
\label{cv}
& + & \int_{M}\left \{\left\langle (\slashiii{D}+e^{u_1})\psi_1
,\psi_1 \right\rangle +\left\langle (\slashiii{D}+e^{u_2})\psi_2
,\psi_2 \right\rangle-e^{2u_1}-e^{2u_2}  \right \}dvol_g.
\end{eqnarray}
It naturally couples the functional of the Toda sytem with some spinor fields  and it preserves a fundamental property of the Toda system on Riemann surfaces,  the conformal invariance. Compare this with the functional considered in \cite{O}. While, apart from differences in notation, the two functionals in the end are formally very similar, we point out that our functional involves only commuting fields. This enables us to impose inequalities like \eqref{ineq}, for instance, which would not be meaningful for anticommuting fields.

In this paper, we shall investigate the blow up behavior for limits of a sequence of solutions to the super-Toda system (\ref{equ-1}). Our  first main result is the following Brezis-Merle type concentration compactness theorem:

\begin{thm}\label{BMTH}
Let $(u_{1n},u_{2n},\psi_{1n},\psi_{2n})$ be a sequence of solutions to (\ref{equ-1}) with the following energy condition
\begin{equation}\label{ineq}
\int_{M}e^{2u_{1n}}+e^{2u_{2n}}+|\psi_{1n}|^4+|\psi_{2n}|^4<C
\end{equation}for some constant $C>0$.
  Define the blow-up sets
\[\begin{array}{rcl}
\Sigma _{u_{in}}&=&\left\{ x\in M,\text{ there is a sequence
}y_n\rightarrow x\text{ such that }u_{in}(y_n)\rightarrow +\infty
\right\}\\
\Sigma _{\psi_{in}}&=&\left\{ x\in M,\text{ there is a sequence
}y_n\rightarrow x\text{ such that }\left| \psi _{in}(y_n)\right|
\rightarrow +\infty \right\}
.\end{array}
\]
Then, we have $\Sigma_{\psi_{in}}\subset\Sigma_{u_{in}}$. Moreover,
$(u_{1n},u_{2n},\psi_{1n},\psi_{2n})$ admits a subsequence,
still denoted  by $(u_{1n},u_{2n},\psi_{1n},\psi_{2n})$, satisfying the following properties:

\begin{enumerate}
\item[a)] $\psi _{in}$ is bounded in $%
L_{loc}^{\infty} (M\backslash \Sigma _{\psi_{in}})$ .

\item[b)]  for $(u_{1n},u_{2n})$, one of the following alternatives holds:
\begin{enumerate}
\item[i)]  $(u_{1n},u_{2n})$ is bounded in $L^{\infty}(M)\times L^{\infty}(M)$ or $(u_{1n},u_{2n})\rightarrow (-\infty, -\infty)$ uniformly in  $M$.
\item[ii)]  $u_{in}$ is bounded in $L^{\infty}(M)$, but $u_{jn}\rightarrow -\infty $ uniformly in $M$ for $j\neq i$.
\item[iii)] the blow up set $\Sigma_{u_{1n}}\cup\Sigma _{u_{2n}}$ is finite and not empty. Moreover,  $u_{in}$ is bounded in $L^{\infty}_{loc}(M\backslash (\Sigma_{u_{1n}}\cup \Sigma_{u_{2n}}))$ or $u_{in}\rightarrow -\infty $ uniformly on compact subsets of $M\backslash (\Sigma_{u_{1n}}\cup \Sigma_{u_{2n}})$, for $i=1,2$.
\end{enumerate}
\end{enumerate}
\end{thm}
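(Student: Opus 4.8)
The plan is to adapt the Brezis--Merle method together with elliptic estimates for the Dirac operator, treating the two equations as a coupled system. First I would establish the inclusion $\Sigma_{\psi_{in}}\subset\Sigma_{u_{in}}$ by a local argument: on any ball $B$ on which $u_{in}$ is uniformly bounded above, the Dirac equation $\slashiii{D}\psi_{in}=-e^{u_{in}}\psi_{in}$ has a right-hand side controlled by $C|\psi_{in}|$, and since $\psi_{in}\in L^4_{loc}$ by \eqref{ineq}, elliptic $L^p$ estimates for $\slashiii{D}$ (bootstrapping from $W^{1,4/3}$ to $W^{1,p}$ for larger $p$, hence to $C^{0,\alpha}$) give a uniform $L^\infty_{loc}$ bound on $\psi_{in}$. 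This simultaneously proves part a): away from $\Sigma_{\psi_{in}}$ (equivalently, on the complement of $\Sigma_{u_{in}}$, plus near points of $\Sigma_{u_{in}}$ where in fact $u_{in}\to-\infty$) the spinors stay bounded in $L^\infty_{loc}$.

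Next I would analyze the scalar system \eqref{equ-1} using the now-available control on the spinor terms. The key point is that once $\psi_{in}$ is bounded in $L^\infty_{loc}(M\setminus\Sigma_{\psi_{in}})$, the quantities $e^{u_{in}}\langle\psi_{in},\psi_{in}\rangle$ are of the same order as the $e^{2u_{in}}$ terms (by Young/Cauchy--Schwarz, $e^{u_i}|\psi_i|^2 \le \frac12 e^{2u_i} + \frac12|\psi_i|^4$), so the right-hand sides of the equations for $-\Delta u_{1n}$ and $-\Delta u_{2n}$ are bounded in $L^1(M)$ by \eqref{ineq}, uniformly. I would then run the Brezis--Merle alternative on $M$: decompose $u_{in}=u_{in}^1+u_{in}^2$ where $u_{in}^1$ solves the Poisson equation with the $L^1$ right-hand side (so $e^{p u_{in}^1}$ is bounded in $L^1$ for $p<2\pi/\|\text{RHS}\|_1$-type thresholds via the Brezis--Merle inequality) and $u_{in}^2$ is harmonic hence controlled by its average. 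Because the Toda coupling matrix is the positive-definite Cartan matrix of $SU(3)$, one gets effective control by passing to suitable linear combinations (e.g. $u_{1n}+u_{2n}$ and $u_{1n}-u_{2n}$, or the three functions $u_1, u_2, -(u_1+u_2)$ adapted to the $SU(3)$ structure), which diagonalizes the leading quadratic interaction and lets one apply the scalar theory componentwise.

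The trichotomy in b) then comes from tracking the behavior of the averages $\bar u_{in}$ and the concentration set. Define $\Sigma_{u_{in}}$ as the set where no uniform upper bound holds; a standard covering/Harnack argument using the $L^1$ bound on $-\Delta u_{in}$ shows this set is finite (each concentration point carries a definite amount of ``mass'' $\int e^{2u_{in}}$, so there are finitely many). On $M\setminus(\Sigma_{u_{1n}}\cup\Sigma_{u_{2n}})$, local Harnack-type estimates for the system force each $u_{in}$ either to stay locally bounded or to tend to $-\infty$ locally uniformly, giving iii); the global alternatives i) and ii) arise when the concentration set is empty, where one distinguishes cases according to which of $\bar u_{1n}$, $\bar u_{2n}$ stay bounded and which drift to $-\infty$, again using the sign structure of the Cartan matrix to rule out mixed blow-up/vanishing behavior within a single component. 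The main obstacle I anticipate is precisely the coupling: in the scalar ($N=1$) super-Liouville case the Brezis--Merle argument is by now standard, but here the off-diagonal $-e^{2u_j}$ terms have the ``wrong'' sign for a naive maximum-principle or Kato-inequality argument, so the delicate step is choosing the right linear combinations and showing that the negative cross-terms cannot destroy the $L^1$-smallness needed to enter the subcritical regime of the Brezis--Merle inequality — this is where the positive-definiteness of the $SU(3)$ Cartan matrix, and the careful bookkeeping of which combination concentrates where, really has to be used.
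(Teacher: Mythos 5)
Your proof of the inclusion $\Sigma_{\psi_{in}}\subset\Sigma_{u_{in}}$ and of part a) via elliptic estimates for $\slashiii{D}$ is essentially the paper's argument, and the general strategy of proving a small-energy regularity lemma and then running a Brezis--Merle covering/Harnack argument is also correct in outline. However, the central technical step — the small-energy $L^\infty$ bound on $u_{in}^{+}$ — is where your plan has a genuine gap.

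You propose to diagonalize the quadratic interaction by passing to linear combinations such as $u_{1n}+u_{2n}$ and $u_{1n}-u_{2n}$ (or inverse-Cartan-matrix combinations) and then applying scalar Brezis--Merle ``componentwise.'' This does not work as stated, for two reasons. First, the exponentials $e^{2u_1}$ and $e^{2u_2}$ do not decompose under such linear changes of variables: knowing that $e^{p(u_1+u_2)}$ and $e^{p(u_1-u_2)}$ are in $L^1$ gives no direct $L^p$ control of $e^{2u_1}$ or $e^{2u_2}$ individually, which is what the elliptic bootstrap needs. Second, while $-\Delta(u_1+u_2)=e^{2u_1}+e^{2u_2}-\frac12(e^{u_1}|\psi_1|^2+e^{u_2}|\psi_2|^2)$ has a manageable sign, the combination $u_1-u_2$ satisfies $-\Delta(u_1-u_2)=3(e^{2u_1}-e^{2u_2})-\cdots$, which has no definite sign, so neither the subharmonicity argument nor the Brezis--Merle inequality applies cleanly to that component. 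So the ``diagonalization'' route does not actually neutralize the troublesome cross-terms.

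The paper (Lemma \ref{snrlm}) avoids diagonalization entirely and instead exploits the sign structure of each scalar equation separately. For $u_{1n}$, write the right-hand side as the sum of a nonnegative piece, $2e^{2u_{1n}}+\frac12 e^{u_{2n}}|\psi_{2n}|^2$, and the remainder $-e^{2u_{2n}}-e^{u_{1n}}|\psi_{1n}|^2\le 0$. Let $w_{1n}$ solve the Dirichlet problem with right-hand side equal to the nonnegative piece; by choosing $\varepsilon_0$ so that its $L^1$-mass is below $2\pi$, the Brezis--Merle inequality gives $e^{2w_{1n}}$ uniformly bounded in $L^p$ for some $p>1$. Then $u_{1n}-w_{1n}$ is \emph{subharmonic} (its Laplacian is $e^{2u_{2n}}+e^{u_{1n}}|\psi_{1n}|^2\ge 0$), so the mean value inequality combined with the $L^1$ bound on $(u_{1n}-w_{1n})^{+}$ (coming from $\int e^{u_{1n}}$) gives a pointwise upper bound. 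This yields $e^{2u_{1n}}\in L^p$ directly, and symmetrically for $u_{2n}$. That is the key idea your proposal is missing: the wrong-sign cross-terms are not ``diagonalized away'' but absorbed into the subharmonic part of a one-equation decomposition, where they help rather than hurt. Once this small-energy regularity lemma is established, the remainder of your proposal (measure-theoretic $\varepsilon$-regular points, finiteness of the blow-up set, Harnack alternatives) is essentially the paper's argument.
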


At a blow up point $p \in \Sigma_{u_{1n}}\cup\Sigma _{u_{2n}}$, by applying a blow-up argument, one can possibly  get an entire solution
$(u,\psi)$  of the super-Liouville system \eqref{equ-sl} on $\R^2$ with finite energy or an entire solution  $(u_1,u_2,\psi_1, \psi_2)$ of the super-Toda system \eqref{equ-1} defined on $\R^2$, i.e
\begin{equation}\label{bequ-1}
\left\{
\begin{array}{rcl}
-\Delta u_1 &=& \ds\vs 2e^{2u_1}-e^{2u_2}-e^{u_1}\left\langle \psi_1 ,\psi_1
\right\rangle +\frac 12 e^{u_2}\left\langle \psi_2 ,\psi_2
\right\rangle,  \qquad  x  \in \R^2,\\
-\Delta u_2 &=& \ds\vs 2e^{2u_2}-e^{2u_1}-e^{u_2}\left\langle \psi_2 ,\psi_2
\right\rangle +\frac 12 e^{u_1}\left\langle \psi_1 ,\psi_1
\right\rangle, \qquad  x  \in \R^2, \\
\slashiii{D}\psi_1 &=&\ds  -e^{u_1}\psi_1, \qquad  x \in \R^2, \\
\slashiii{D}\psi_2 &=&\ds  -e^{u_2}\psi_2, \qquad  x \in \R^2, \\
\end{array}
\right.
\end{equation}
with the finite energy condition
\begin{equation}
\int_{\R^2}(e^{2u_1}+e^{2u_2}+|\psi_1|^4+|\psi_2|^4)<\infty. \label{bequ-2}
\end{equation}
Recall that in the super-Liouville case, it was shown in \cite{JWZ} that the following integral quantity is quantized
\begin{equation} \label{entire-energy-sl}
\alpha= \int_{\R^2}e^{2u}-\frac 12 e^{u}|\psi|^2=2 \pi.
\end{equation}
In the super-Toda case, naturally, we shall define the following two integral quantities
\begin{equation} \label{entire-energy-st}
\alpha_i : =\int_{\R^2}e^{2u_i}-\frac 12 e^{u_i}|\psi_i|^2, \quad i=1, 2.
\end{equation}

\

 By applying the Kelvin transformation and  potential analysis, we have the following asymptotic behavior of an entire solution for the super-Toda system. In particular, we show that the two quantities defined in \eqref{entire-energy-st} are both larger than $2\pi$.

\begin{thm}\label{asyth}
Let $(u_1,u_2,\psi_1,\psi_2)$ be a solution of (\ref{bequ-1}) and (\ref{bequ-2}). Then we have
\begin{equation}\label{ayu}
u_i(x)=-\frac {\beta_i}{2\pi} \ln{|x|}+C+O(|x|^{-1}),
\end{equation}
%\begin{equation}\label{aypsi}
%\psi_i(x)=-\frac {1}{2\pi}\frac{x}{|x|^2}\cdot
%\xi_i+o(|x|^{-1})\qquad \text{for}\quad |x| \quad \text{near}\quad
%\infty,
%\end{equation}
%where $\cdot$ is the Clifford multiplication, $C\in R$ is some constant¡£
\begin{equation}\label{asy-psi-4}
|\psi_i (x)|\leq C|x|^{-\frac 12-\delta_0},
\end{equation}
for $|x|$ large and for some small positive constant $\delta_0$, where $\beta_1 =2\alpha_1-\alpha_2$, $\beta_2 =2\alpha_2-\alpha_1$ and $\alpha_i, i=1,2$ are defined as in \eqref{entire-energy-st}. Moreover, there hold
\begin{equation}\label{alpha}
\alpha_1^2+\alpha_2^2-\alpha_1\alpha_2=2\pi(\alpha_1+\alpha_2),
\end{equation}
\begin{equation}\label{beta}
\beta_1^2+\beta_2^2+\beta_1\beta_2=6\pi(\beta_1+\beta_2),
\end{equation}
with $\beta_i>2\pi$  and $\alpha_i>2\pi$, for $i=1,2$.
\end{thm}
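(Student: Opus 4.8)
The plan is to adapt the scheme developed for the super-Liouville equation in \cite{JWZ} to the coupled system, in three stages: a priori bounds and polynomial decay for the spinors; a potential-theoretic analysis of the functions $u_i$; and a Pohozaev-type identity for the system. First I would record the elementary integrability facts. Writing the $u_i$-equations in \eqref{bequ-1} as $-\Delta u_i = F_i$, Young's inequality $e^{u_i}|\psi_i|^2 \le \frac12 e^{2u_i} + \frac12|\psi_i|^4$ together with \eqref{bequ-2} gives $F_i \in L^1(\R^2)$, and by the very definition \eqref{entire-energy-st} of the $\alpha_i$ one has $\int_{\R^2}F_i = 2\alpha_i - \alpha_j = \beta_i$. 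It is convenient to pass to the combinations $U_i := \sum_j (a^{-1})_{ij}\,u_j$, which decouple the Liouville part into $-\Delta U_i = g_i$, where $g_i := e^{2u_i} - \frac12 e^{u_i}|\psi_i|^2 \in L^1(\R^2)$ and $\int_{\R^2}g_i = \alpha_i$. Using interior elliptic estimates for $\slashiii{D}$ together with the fact that the local energies $\int_{B_2(x)}\big(e^{2u_i}+|\psi_i|^4\big)$ tend to $0$ as $|x|\to\infty$ (a consequence of \eqref{bequ-2}), one first shows $\psi_i\in L^\infty(\R^2)$ with $\psi_i(x)\to 0$, and then Brezis--Merle type estimates applied to the $u_i$-equations give that each $u_i$ is bounded above on $\R^2$.

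With these bounds I would carry out the potential-theoretic analysis. From the representation $U_i(x) = -\frac1{2\pi}\int_{\R^2}\ln|x-y|\,g_i(y)\,dy + h_i(x)$ with $h_i$ entire harmonic, together with the radial-average identity $\frac{d}{dr}\overline{U_i}(r) = -\frac{1}{2\pi r}\int_{B_r}g_i$, one obtains $\overline{U_i}(r)/\ln r \to -\frac{\alpha_i}{2\pi}$; since $U_i$ is bounded above, Liouville's theorem forces $h_i$ to be constant, and, after upgrading the average to a pointwise estimate via the equation and the upper bound on $u_i$, one gets $u_i(x) = -\frac{\beta_i}{2\pi}\ln|x| + C + o(1)$, with interior gradient estimates and the decay of the terms in $F_i$ sharpening the remainder to $O(|x|^{-1})$, which is \eqref{ayu}. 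In particular $e^{2u_i}(x)$ is comparable to $|x|^{-\beta_i/\pi}$ near infinity, so the integrability of $e^{2u_i}$ over $\R^2$ forces $\beta_i>2\pi$; since $3\alpha_1 = 2\beta_1+\beta_2$ and $3\alpha_2 = \beta_1+2\beta_2$, it follows that $\alpha_i>2\pi$ as well. For the spinor decay \eqref{asy-psi-4} I would apply the Kelvin transform $\hat\psi_i(x) := \frac{x}{|x|^2}\cdot\psi_i\!\big(\frac{x}{|x|^2}\big)$, which satisfies the Dirac equation $\slashiii{D}\hat\psi_i = -|x|^{-2}\,e^{u_i(x/|x|^2)}\,\hat\psi_i$ on $B_1\setminus\{0\}$; by \eqref{ayu} its coefficient behaves like $|x|^{\beta_i/(2\pi)-2}$ near the origin and, precisely because $\beta_i>2\pi$, lies in $L^p(B_1)$ for some $p>2$. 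A bootstrap using interior $L^p$-regularity for $\slashiii{D}$ then gives $\hat\psi_i\in C^{0,\gamma}(B_{1/2})$; in particular $\hat\psi_i$ is bounded near the origin, which translates into $|\psi_i(x)|\le C|x|^{-1}$ (hence $\le C|x|^{-1/2-\delta_0}$) for $|x|$ large.

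It remains to establish \eqref{alpha}; then \eqref{beta} follows at once, since $\beta_1^2+\beta_2^2+\beta_1\beta_2 = 3(\alpha_1^2+\alpha_2^2-\alpha_1\alpha_2)$ and $\beta_1+\beta_2 = \alpha_1+\alpha_2$. For \eqref{alpha} I would integrate over $B_R$ the mixed Pohozaev expression $\sum_i\big[(x\cdot\nabla U_i)(-\Delta u_i) + (x\cdot\nabla u_i)(-\Delta U_i)\big]$ and let $R\to\infty$. The boundary integrals on $\partial B_R$, evaluated with \eqref{ayu} and its gradient form, converge to $-\frac{1}{2\pi}\sum_i\alpha_i\beta_i = -\frac1\pi(\alpha_1^2+\alpha_2^2-\alpha_1\alpha_2)$. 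In the interior, using $-\Delta u_i = \sum_j a_{ij}g_j$, $-\Delta U_i = g_i$, and $\sum_i a_{ij}U_i = u_j$, the expression reduces to $2\sum_j\int_{B_R}(x\cdot\nabla u_j)\,g_j$; here $\int(x\cdot\nabla u_j)\,e^{2u_j} = \frac12\int x\cdot\nabla(e^{2u_j}) \to -\int_{\R^2}e^{2u_j}$, while pairing $\slashiii{D}\psi_j = -e^{u_j}\psi_j$ with the dilation-twisted spinor $x\cdot\nabla\psi_j+\frac12\psi_j$ and exploiting the trace-freeness of the energy--momentum tensor of $\psi_j$, together with the decay \eqref{asy-psi-4} (which kills the boundary contributions), yields $\int_{\R^2}(x\cdot\nabla u_j)\,e^{u_j}|\psi_j|^2 = -\int_{\R^2}e^{u_j}|\psi_j|^2$; hence $\int(x\cdot\nabla u_j)\,g_j \to -\alpha_j$ and the interior limit equals $-2(\alpha_1+\alpha_2)$. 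Equating the two sides gives $\alpha_1^2+\alpha_2^2-\alpha_1\alpha_2 = 2\pi(\alpha_1+\alpha_2)$, which is \eqref{alpha}.

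The step I expect to be the main obstacle is the Pohozaev identity: one must set up the Dirac energy--momentum tensor of the $\psi_i$ correctly, keep track of all of its boundary contributions, and verify that the decay rates in \eqref{ayu} and \eqref{asy-psi-4} are precisely strong enough for every boundary term to converge to its asserted limit, with no leftover residue from the mixed terms $e^{u_i}|\psi_j|^2$ and $e^{2u_i}e^{2u_j}$. A second, more technical, difficulty lies in the potential-theoretic step and in the interplay between the two asymptotics: the coefficient $e^{u_i}$ in the Dirac equation is controlled only once \eqref{ayu} is known, whereas the sharp form of \eqref{ayu} uses the decay of $\psi_i$, so the two must be bootstrapped in tandem — the point that closes the bootstrap being that the logarithmic asymptotics together with finite energy already force $\beta_i>2\pi$, which is exactly the threshold at which the Kelvin-transformed Dirac coefficient becomes $L^p$-integrable for some $p>2$.
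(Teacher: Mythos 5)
Your plan follows the paper's broad outline --- potential analysis for the $u_i$, Kelvin transform for the spinors, a quantity that forces the algebraic relation between the $\alpha_i$ --- but it has one genuine gap, and it replaces one of the paper's key tools by a different (but plausible) one.

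\textbf{The gap: excluding $\beta_i=2\pi$.} You claim that once $u_i/\ln|x|\to-\beta_i/(2\pi)$ is known, integrability of $e^{2u_i}$ ``forces $\beta_i>2\pi$,'' and you identify the closing of this bootstrap as the delicate point. But the logarithmic asymptotics together with $\int_{\R^2}e^{2u_i}<\infty$ force only $\beta_i\geq 2\pi$: for instance a profile with $e^{2u_i}\sim|x|^{-2}(\ln|x|)^{-2}$ is integrable and corresponds to $\beta_i=2\pi$ exactly. Worse, the sharper pointwise estimate $u_i(x)=-\frac{\beta_i}{2\pi}\ln|x|+C+o(1)$, which you invoke to upgrade $e^{2u_i}$ to ``comparable to $|x|^{-\beta_i/\pi}$'', itself \emph{requires} $\beta_i>2\pi$ in the standard potential-theory arguments (this is the Chen--Li scheme, which needs a subcritical exponent to bound the Newtonian potential near infinity). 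So the bootstrap does not close as you describe; the borderline case must be excluded by a separate argument. The paper does this with a dedicated contradiction: first, if $\beta_1=2\pi$ and $\beta_2>2\pi$ then, after the Kelvin transform, the source $f_1$ for $v_1$ is strictly positive near the singularity (here the spinor decay from Lemma~\ref{asy-psi} and the comparison of the rates $e^{2v_1}\sim|x|^{-2}$ versus $e^{2v_2}\sim|x|^{-(4-\beta_2/\pi)}$ enter crucially), which forces $\int e^{2v_1}=\infty$, a contradiction; second, if $\beta_1=\beta_2=2\pi$ the same type of divergence argument applied to the decoupled combination $w=\tfrac23 v_1+\tfrac13 v_2$ gives a contradiction. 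You need some version of this two-step exclusion before the rest of your proof can proceed.

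\textbf{The different route for \eqref{alpha}--\eqref{beta}.} You derive the quadratic relation by integrating a mixed Pohozaev expression and pairing the Dirac equation with the dilation generator $x\cdot\nabla\psi_j+\tfrac12\psi_j$. The paper instead expands the holomorphic quadratic differential $T(z)$ of Proposition~\ref{pro-holo} near infinity; since $T$ is entire and decays, it vanishes identically, and reading off the coefficient of $z^{-2}$ gives exactly $\sum_{j,k}a^{jk}\bigl(2\tfrac{\beta_k}{2\pi}-\tfrac{\beta_j\beta_k}{4\pi^2}\bigr)=0$, which is \eqref{beta}. These two routes are morally the same (the Pohozaev identity is essentially the real part of $\oint z\,T(z)\,dz$), but the holomorphic-differential argument is shorter: you do not need to verify that all the mixed boundary terms in the Pohozaev integral cancel, because Liouville's theorem for holomorphic functions does that bookkeeping for you. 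Your approach would work in principle, but --- as you yourself flag --- it requires tracking every boundary contribution of the spinor energy--momentum tensor with the decay rates obtained above, and this is exactly the step most likely to hide an error.

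A small additional remark: your Kelvin-bootstrap claim $|\psi_i(x)|\leq C|x|^{-1}$ is stronger than what the paper proves; the paper's Lemma~\ref{asy-psi} gives $|\psi_i(x)|\leq C|x|^{-1/2-\delta_0}$ for some small $\delta_0>0$, which is what is actually needed downstream, and that weaker conclusion is what one gets most directly from the integral estimate once $\beta_i>2\pi$ is in hand.
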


We remark that the identities in \eqref{alpha}, \eqref{beta} were shown with the help of a holomorphic quadratic differential defined for a solution of the super-Toda system (see Proposition \ref{pro-holo}).

By further exploring the asymptotic analysis of the blow-up sequence  $(u_{1n},u_{2n},\psi_{1n},\psi_{2n})$, we shall see that, in addition to the above mentioned two types of bubbling solutions defined on $\R^2$, there possibly exist two more types of bubbling solutions, namely, solutions of \eqref{equ-1} or \eqref{equ-sl} defined on $\R^2\setminus \{0\}$ and with finite energy. In general, a global or a local singularity for such solutions may not be removable and hence these solutions may not be conformally extended to solutions on $\S^2$. In the case of the super-Liouville equation, we can show that all the singularities arising from the bubbling solutions are removable (Theorem 1.2 in \cite{JWZZ1}), by exploring the fact that the corresponding holomorphic quadratic differential is locally $L^1$ integrable near the singularities (Proposition 2.6 in \cite{JWZZ1}). However, in the case of the super-Toda system where there are two scalar fields and two spinor fields, the approach in \cite{JWZZ1}  fails and hence we need to develop a new method, without relying on some singularity removability result.

In this paper, we develop  such a new method. In fact, in our new approach, we are able to show that all these four types of  bubbling solutions on $\R^2$ or on $\R^2\setminus \{0\}$ have a new kind of energy gap property for the spinor equation (see Lemma \ref{spinor-energygap}).   Consequently, we have the following energy identities for the spinor parts of a blow-up sequence of solutions to the super-Toda system \eqref{equ-1} with uniformly bounded energy:

\begin{thm}\label{engy-indt}
Let $(u_{1n},u_{2n},\psi_{1n},\psi_{2n})$ be a sequence of solutions to (\ref{equ-1}) with uniformly bounded energy
\begin{equation}\label{equ-2n}
\int_{M}e^{2u_{1n}}+e^{2u_{2n}}+|\psi_{1n}|^4+|\psi_{2n}|^4 <C<\infty
\end{equation}
Then, there are at most finitely many bubbling solutions of the following four types:
 \begin{itemize}
 \item[(I.1-I.2)] nontrivial solutions of (\ref{equ-1}) on $\R^2$ or on $\R^2\setminus{\{0\}}$ with finite energy:  \\
 $(u_1^{i,k},u_2^{i,k}, \psi_1^{i,k}, \psi_2^{i,k})$, $i=1,2,\cdots , l; k=1,2,\cdots, L_i$ satisfying
 \begin{equation}\label{bubbleproperty-st}
\epsilon_1 \leq \int_{\R^2}e^{2u_1^{i,k}}+e^{2u_2^{i,k}}, \quad    \int_{\R^2} |\psi_1^{i,k}|^4+|\psi_2^{i,k}|^4< \infty,   \quad \int_{\R^2}e^{2u_j^{i,k}}-\frac 12 e^{u_j^{i,k}}|\psi_j^{i,k}|^2 > 0, \quad  j=1,2.
\end{equation}

 \item[(II.1-II.2)]  nontrivial solutions of (\ref{equ-sl}) on $\R^2$ or on $\R^2\setminus{\{0\}}$ with finite energy: \\
 $(u_j^{i,k},\phi_j^{i,k})$, $i=1,2,\cdots , m; k=1,2,\cdots, M_i$, $j=1,2$
satisfying
 \begin{equation}\label{bubbleproperty-sl}
\epsilon_1 \leq \int_{\R^2}e^{2u_j^{i,k}}, \quad   \int_{\R^2} |\phi_j^{i,k}|^4 < \infty, \quad
\int_{\R^2}e^{2u_j^{i,k}}-\frac 12 e^{u_j^{i,k}}|\psi_j^{i,k}|^2 > 0.
\end{equation}

\end{itemize}
where $\epsilon_1>0$ is some universal constant as in Lemma \ref{spinor-energygap}, such that, after selection of a subsequence, $(\psi_{1n},\psi_{2n})$
converges in $C_{loc}^{2}$ to some $(\psi_1,\psi_2)$ on $M\backslash (\Sigma_{1n}\cup \Sigma_{2n})$
and the following energy identities hold: \begin{equation*}
\lim_{n\rightarrow
\infty}\int_{M}|\psi_{jn}|^4=\int_{M}|\psi_j|^4+\sum_{i=1}^{l}
\sum_{k=1}^{L_i}\int_{\R^2}|\psi_j^{i,k}|^4+\sum_{i=1}^{m}
\sum_{k=1}^{M_i}\int_{\R^2}|\phi_j^{i,k}|^4,     \qquad j=1, 2.
\end{equation*}
\end{thm}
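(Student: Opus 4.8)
The plan is to follow the now-standard blow-up scheme for coupled Liouville--Dirac type systems (as in \cite{JWZ, JWZZ1, JWZZ2}), but with the crucial new ingredient being the spinor energy gap of Lemma~\ref{spinor-energygap}, which replaces the singularity-removability argument that worked in the super-Liouville case but fails here. First I would set up the bubbling tree: working with a subsequence, by Theorem~\ref{BMTH} the blow-up set $S := \Sigma_{u_{1n}} \cup \Sigma_{u_{2n}}$ is finite, $\Sigma_{\psi_{in}} \subset \Sigma_{u_{in}}$, and $(\psi_{1n},\psi_{2n})$ is locally bounded, hence (by elliptic estimates for $\slashiii{D}$ applied to the equations $\slashiii{D}\psi_{jn} = -e^{u_{jn}}\psi_{jn}$, using $e^{u_{jn}}$ locally bounded off $S$) converges in $C^2_{loc}(M\setminus S)$ to a limit $(\psi_1,\psi_2)$. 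It then suffices to prove the energy identity locally around each $p \in S$, i.e.\ on a small disk $B_r(p)$ with $\bar B_r(p) \cap S = \{p\}$.

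Around such a $p$ I would run the standard rescaling/induction-on-concentration-scale procedure. At each stage one picks points $x_n \to p$ and scales $\lambda_n \to 0$ realizing the maximal local concentration of $e^{2u_{1n}} + e^{2u_{2n}}$, sets $\tilde u_{jn}(x) = u_{jn}(x_n + \lambda_n x) + \ln \lambda_n$ and $\tilde \psi_{jn}(x) = \lambda_n^{1/2}\psi_{jn}(x_n+\lambda_n x)$; the conformal invariance of \eqref{equ-1} means the rescaled fields again solve \eqref{equ-1} with $K_g$ rescaled to $0$, and the uniform energy bound \eqref{equ-2n} is preserved. Passing to a limit gives a nontrivial entire solution on $\R^2$ or on $\R^2 \setminus \{0\}$ (the puncture appearing when the concentration point escapes to the origin of a previous scale and the limit scalar fields cannot be controlled there), of either super-Toda type (I.1/I.2) or, when in the limit only one of the two scalar fields survives, super-Liouville type (II.1/II.2). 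The lower energy bound $\epsilon_1 \le \int e^{2u_1^{i,k}} + e^{2u_2^{i,k}}$ (resp.\ $\le \int e^{2u_j^{i,k}}$) comes from the $\epsilon$-regularity/no-neck-for-small-energy estimate, and since each bubble carries at least $\epsilon_1$ of the uniformly bounded conformal volume, the process terminates after finitely many steps — giving the finiteness of the index sets $l, L_i, m, M_i$. Standard arguments then yield the energy identity for the scalar conformal volumes $\int e^{2u_{jn}}$.

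The genuinely new and hardest step is upgrading this to an energy identity for the spinor quantities $\int |\psi_{jn}|^4$, i.e.\ ruling out \emph{spinor energy loss on the necks} connecting consecutive bubbles (and between the bubbles and the body). On each neck region — an annulus $A_n = B_{\delta}(x_n) \setminus B_{\lambda_n R}(x_n)$ with $\delta \to 0$, $R \to \infty$ — I would need to show $\int_{A_n} |\psi_{jn}|^4 \to 0$. The standard estimate for $\slashiii{D}\psi = -e^u\psi$ on such an annulus, via a decomposition of $\psi$ into radial modes (the kernel of $\slashiii{D}$ on an annulus being spanned by explicit holomorphic/antiholomorphic spinors $z^k$), reduces neck control to controlling $\|e^{u_{jn}}\|$ and the boundary (flux) terms on $\partial A_n$; the flux terms are handled by the smallness of the scalar energy on the neck (no scalar bubbling in the neck region, which is how the procedure partitions the domain) together with a Pohozaev-type identity. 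The point at which the super-Liouville proof used singularity removability is exactly where one needs to know the spinor bubble energy does not ``leak'' into arbitrarily fine scales at a puncture; here instead I would invoke Lemma~\ref{spinor-energygap}: since \emph{any} nontrivial bubble of the four types has $\int_{\R^2}|\psi_j|^4$ bounded below by a universal $\epsilon_1$ whenever it is nonzero — equivalently, a spinor bubble of small $L^4$-energy must be trivial — there is no ``ghost'' spinor energy that can disappear into the necks without being accounted for by an honest bubble. Concretely, one argues by contradiction: if spinor energy $\ge \epsilon_0$ concentrated on a neck without a corresponding scalar bubble, a further rescaling centered at that concentration would produce in the limit an entire (or punctured-plane) solution with zero scalar conformal volume but nonzero spinor $L^4$-norm, i.e.\ effectively $\slashiii{D}\psi_j = 0$ with $u_j \equiv -\infty$, which the gap lemma excludes — forcing the neck spinor energy to vanish. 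Iterating this over the finitely many necks of the bubble tree and summing gives
\[
\lim_{n\to\infty}\int_M |\psi_{jn}|^4 = \int_M |\psi_j|^4 + \sum_{i=1}^l\sum_{k=1}^{L_i}\int_{\R^2}|\psi_j^{i,k}|^4 + \sum_{i=1}^m\sum_{k=1}^{M_i}\int_{\R^2}|\phi_j^{i,k}|^4, \qquad j=1,2,
\]
as claimed. The main obstacle, then, is establishing that neck spinor energy is genuinely negligible — i.e.\ the quantitative neck estimate for the Dirac equation coupled to a potential $e^{u_{jn}}$ that itself may degenerate — and correctly bookkeeping the recursive bubble-neck decomposition so that the gap lemma can be applied at every scale.
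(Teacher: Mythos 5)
Your high-level strategy — rescale at each blow-up point, reduce to a neck estimate $\int_{A_{\delta,R,n}}|\psi_{jn}|^4\to 0$, and invoke the spinor gap of Lemma~\ref{spinor-energygap} to prevent spinor energy from leaking at scales where no scalar bubble forms — is the right one, and your identification of the gap lemma as the replacement for the singularity-removability argument of the super-Liouville case is the key conceptual point. However, the place where you actually deploy the gap lemma is not where it does its work, and this leaves a genuine gap. You argue: spinor energy concentrated on a neck without a corresponding scalar bubble would produce, after a further rescaling, a limit ``with zero scalar conformal volume but nonzero spinor $L^4$-norm,'' which the gap lemma excludes. But the dichotomy ``scalar bubble or zero scalar conformal volume'' is false. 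In the paper's proof of its Claim~I.1, after the second rescaling $(v_{1n},v_{2n},\varphi_{1n},\varphi_{2n})$ at the concentration scale $r_n$, Theorem~\ref{BMTH} yields \emph{four} alternatives, two of which have the rescaled scalar fields \emph{bounded} (neither blowing up nor escaping to $-\infty$); the limit then solves \eqref{equ-1} or \eqref{equ-sl} on $\R^2\setminus\{0\}$ with some a priori unknown positive scalar conformal volume. It is precisely there that Lemma~\ref{spinor-energygap} is applied, in contrapositive form: since the limiting spinors cannot all vanish (by the assumed concentration \eqref{3.1}), the scalar energy of this punctured-plane solution must exceed $\epsilon_1$, and hence one has constructed a new bubble of type (I.2) or (II.2), contradicting the one-bubble hypothesis. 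Your version collapses these two alternatives into ``zero scalar volume, harmonic spinor'' (the paper's alternative (2), which is handled by the elementary vanishing of harmonic spinors on $\S^2$) and therefore never actually produces the punctured-plane bubbles that appear in the statement, nor does it justify the lower bound $\epsilon_1\le \int e^{2u}$ in \eqref{bubbleproperty-st}--\eqref{bubbleproperty-sl}. That bound does not come from $\epsilon$-regularity, as you write: for $\R^2$ bubbles it comes from the quantization $\int e^{2u_i}-\tfrac12 e^{u_i}|\psi_i|^2>2\pi$ of Theorem~\ref{asyth} (resp.\ the super-Liouville identity), and for punctured-plane bubbles it comes from the gap lemma exactly as just described.

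Two further points. First, the quantitative neck estimate in the paper is not the Fourier/radial-mode plus Pohozaev argument you sketch; it uses Lemma~\ref{main-lamm} (a direct $W^{1,4/3}$--$L^4$ annulus estimate for $\slashiii{D}\psi=-e^u\psi$ from \cite{JWZZ1}) combined with Claim~I.2, which partitions the neck into finitely many annuli each carrying scalar energy below $1/(4\Lambda^2)$, and then iterates the annulus estimate dyadically using Claim~I.1 to control the boundary terms. Your alternative might be made to work, but as stated it leaves the flux terms uncontrolled. Second, the strict positivity $\int e^{2u_j^{i,k}}-\tfrac12 e^{u_j^{i,k}}|\psi_j^{i,k}|^2>0$ asserted in \eqref{bubbleproperty-st} and \eqref{bubbleproperty-sl} is not automatic for the punctured-plane bubbles; it requires the Kelvin-transform/potential-theoretic argument of Theorems~\ref{nonnegative-st} and \ref{nonnegative-sl} (based on Lemma~\ref{lm-5.5}), which your proposal omits.
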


\
\

For each blow-up point $p\in \Sigma_{u_{1n}}\cup \Sigma_{u_{2n}}$,  we define the blow-up values $(m_1(p),m_2(p))$ at $p$ as follows:
\begin{equation} \label{blow-up-value-st}
m_i(p)=\lim_{r\rightarrow 0}\lim_{n\rightarrow \infty}\int_{B_r(p)}e^{2u_{in}}-\frac 12e^{u_{in}} | \psi_{in}|^2, \quad i=1, 2.
\end{equation}

As an application of the energy identities for the spinor parts of a blow-up sequence of solutions to the super-Toda system, we can improve the concentration compactness result in Theorem \ref{BMTH}.

\begin{thm}\label{Pblowupb}
If $\Sigma_{u_{1n}}\cup \Sigma_{u_{2n}}\neq \emptyset$ in Theorem \ref{BMTH}, then at least one of  $u_{1n}$ and $u_{2n}$ tends to $-\infty$ uniformly in any compact subset of $M\backslash   (\Sigma_{u_{1n}}\cup  \Sigma_{u_{2n}})$. Moreover, let $(m_1(p),m_2(p))$ be the blow-up values  at a blow-up point $p$, then the following three alternatives hold:
\begin{itemize}
\item[i)]  $m_1(p) > 2 \pi, m_2(p) > 2 \pi$;

\item[ii)]  $m_1(p) \geq 2\pi,  m_2(p) \geq 0$;

\item[iii)]  $m_1(p) \geq 0,  m_2(p) \geq 2\pi$.
\end{itemize}

%Moreover,
%$$
%\sum_{i=1}^{2}a_{ki}(e^{u_i}-\frac 12e^{\frac {u_i}2}\left\langle \psi_i ,\psi_i
%\right\rangle )\rightarrow \sum_{i=1}^{2}\sum_{p\in\Sigma_{u_{in}}}a_{ki}m_{i}(p)\delta_p\qquad \text { as }  %n\rightarrow \infty,
%$$in the sense of measure on $M$ with $m_{i}(p)\geq 0$.
\end{thm}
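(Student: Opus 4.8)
The plan is to combine the energy identity of Theorem \ref{engy-indt} with the asymptotic/quantization identities of Theorem \ref{asyth}, together with the Brezis--Merle alternative of Theorem \ref{BMTH}. First I would establish the dichotomy ``at least one of $u_{1n}, u_{2n}\to -\infty$ locally uniformly away from the blow-up set.'' By part b) of Theorem \ref{BMTH}, on $M\setminus(\Sigma_{u_{1n}}\cup\Sigma_{u_{2n}})$ each $u_{in}$ either stays bounded in $L^\infty_{loc}$ or tends to $-\infty$ locally uniformly. Suppose, for contradiction, that both $u_{1n}$ and $u_{2n}$ remain bounded in $L^\infty_{loc}$ on the complement of the (finite, nonempty) blow-up set. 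Then, arguing as in the standard Toda/Liouville theory, one shows that the measures $e^{2u_{in}}-\tfrac12 e^{u_{in}}|\psi_{in}|^2$ converge to $h_i + \sum_p m_i(p)\delta_p$ with $h_i$ an $L^1_{loc}$ density; integrating the first two equations of \eqref{equ-1} against a test function and using the boundedness away from $\Sigma$ forces a relation on the $m_i(p)$ that, by the Pohozaev-type constraint coming from the holomorphic quadratic differential (Proposition \ref{pro-holo}) applied on small balls, is incompatible with having genuine concentration at $p$. The cleanest route is: near $p$, after rescaling at the scale where $\max(u_{1n},u_{2n})$ is attained, one of the four bubble types of Theorem \ref{engy-indt} appears; the leftover ``neck'' plus the residual regular part cannot both keep $u_{1n}$ and $u_{2n}$ bounded, because the sub-solution estimate $-\Delta u_{in}\le 2e^{2u_{in}}+\tfrac12 e^{u_{jn}}|\psi_{jn}|^2 + C$ together with a Harnack inequality would then propagate boundedness into $\Sigma$, contradicting $p\in\Sigma$. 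This gives alternative (i) of Theorem \ref{BMTH} being excluded once $\Sigma\ne\emptyset$, hence the first assertion.

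Next I would prove the lower bounds on $(m_1(p),m_2(p))$. Fix a blow-up point $p$ and perform the blow-up at $p$: choose $x_n\to p$ and $\lambda_n\to 0$ so that, setting $\tilde u_{in}(x)=u_{in}(x_n+\lambda_n x)+\ln\lambda_n$ and $\tilde\psi_{in}(x)=\lambda_n^{1/2}\psi_{in}(x_n+\lambda_n x)$, the rescaled sequence converges (in $C^2_{loc}$ away from finitely many points) to one of the four model solutions. The key point is which fields survive the rescaling. If both $\tilde u_{1n}$ and $\tilde u_{2n}$ converge to a nontrivial limit, the bubble is of type (I.1) or (I.2), a solution of \eqref{equ-1} or \eqref{equ-sl}-free system on $\R^2$ or $\R^2\setminus\{0\}$; by Theorem \ref{asyth} (in the entire case) or its analogue on $\R^2\setminus\{0\}$, the associated quantities $\alpha_1,\alpha_2$ satisfy \eqref{alpha} with $\alpha_i>2\pi$, and since $m_i(p)\ge \alpha_i$ (the bubble captures at most the total concentrated mass) we land in case (i). If instead only one scalar field, say $u_{1n}$, survives while $u_{2n}\to-\infty$ on the bubble scale, then the bubble is a super-Liouville solution of type (II.1)/(II.2) in the first component, for which the analogue of \eqref{entire-energy-sl} (or the $\R^2\setminus\{0\}$ version, $\alpha\ge 2\pi$, from \cite{JWZZ1}) gives $m_1(p)\ge 2\pi$, while $m_2(p)\ge 0$ follows from \eqref{bubbleproperty-sl}/\eqref{bubbleproperty-st}, i.e. nonnegativity of the limiting measure; this is alternative (ii), and symmetrically (iii). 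One must also rule out that \emph{neither} scalar survives while a nonzero spinor does; this is precisely excluded by the spinor energy gap Lemma \ref{spinor-energygap}, since a nontrivial spinor forces $\int e^{2u}\ge\epsilon_1>0$ on the bubble, so at least one $u_{in}$ must survive.

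Finally I would assemble these: the finitely many bubbles at $p$ each contribute a mass pair lying in one of the three cones $\{m_1>2\pi, m_2>2\pi\}$, $\{m_1\ge2\pi, m_2\ge0\}$, $\{m_1\ge0, m_2\ge2\pi\}$, and the total $(m_1(p),m_2(p))$ is the sum over the bubble tree at $p$ of these contributions (no mass is lost in the necks — for the scalar parts this is the energy identity that one proves alongside Theorem \ref{engy-indt}, and for the spinor parts it is Theorem \ref{engy-indt} itself). A sum of vectors each in one of these three cones again lies in their union: if every summand is in the first cone we get (i); if at least one summand lies in the second cone and all others have $m_1$-component $\ge 0$ (which holds for all three cones) the sum has $m_1\ge 2\pi$, $m_2\ge 0$, giving (ii); similarly (iii). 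The main obstacle is the bubble-tree bookkeeping in the first step — showing that \emph{some} scalar field must decay globally requires controlling the interaction between the two scalar equations along necks, where the ``wrong sign'' terms $-e^{2u_{jn}}$ in \eqref{equ-1} prevent a naive maximum-principle argument; one circumvents this by working with $u_{1n}+u_{2n}$ and $u_{1n}-u_{2n}$, where the former satisfies a Liouville-type inequality amenable to Brezis--Merle, while the Pohozaev identity from the holomorphic quadratic differential controls the latter's concentration, exactly as in the derivation of \eqref{alpha}--\eqref{beta}.
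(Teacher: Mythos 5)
Your proposal gets the right high-level ingredients for the blow-up value estimates, but the argument for the first assertion — that at least one of $u_{1n},u_{2n}$ tends to $-\infty$ away from $\Sigma$ — has a genuine gap, and this is the crux of the theorem.

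For the first part, you offer two sketches: (a) a ``Harnack propagation of boundedness into $\Sigma$'' which ``contradicts $p\in\Sigma$,'' and (b) a split into $u_{1n}\pm u_{2n}$ with a Brezis--Merle inequality for the sum and a Pohozaev constraint for the difference. Neither is carried out, and (a) as stated is not sound: having $u_{in}$ bounded on every compact subset of $M\setminus\Sigma$ is entirely compatible with blow-up at $\Sigma$ (that is exactly alternative b)(iii) of Theorem~\ref{BMTH}), so a Harnack argument on the complement cannot by itself ``propagate boundedness into'' the blow-up set. The actual mechanism is a mass-concentration/integrability contradiction: one works with the dual combination $v_n:=\tfrac{2}{3}u_{1n}+\tfrac{1}{3}u_{2n}$ (note $\tfrac23,\tfrac13$ are the first row of $(a^{ij})$, not the naive $u_1+u_2$), for which the Cartan-matrix structure cancels the $e^{2u_2}$ and $e^{u_2}|\psi_2|^2$ terms, giving $-\Delta v_n = e^{2u_{1n}}-\tfrac12 e^{u_{1n}}|\psi_{1n}|^2-K_g$. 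One then compares $v_n$ from below, via the maximum principle, with the solution $w_n$ of the Dirichlet problem with the same right-hand side and boundary datum $-C$ on $\partial B_R(x_0)$; boundedness of $\int e^{2v_n}$ (from the energy bound and Young's inequality) plus Fatou gives $\int_{B_R(x_0)}e^{2w}<\infty$ for the limit $w$, while the rescaling/bubble analysis (using the spinor neck-energy vanishing from Theorem~\ref{engy-indt} together with Theorems~\ref{nonnegative-st}, \ref{nonnegative-sl}) gives $\mu\{x_0\}\ge 2\pi$ for the limit measure $\mu$ of the right-hand side, hence $w(x)\ge \log\frac{1}{|x-x_0|}+O(1)$ and $\int e^{2w}=\infty$. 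This contradiction is the heart of the proof and is absent from your proposal.

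For the lower bounds on $(m_1(p),m_2(p))$, your structure (classify the top bubble, use spinor neck vanishing, sum nonnegative contributions from further bubbles) matches the paper, but two assertions need correction. First, you claim that on $\R^2\setminus\{0\}$ the quantities $\alpha_i$ satisfy $\alpha_i>2\pi$ ``by the analogue of Theorem~\ref{asyth}''; the paper only shows $\alpha_i>0$ for bubbles on $\R^2\setminus\{0\}$ (Theorems~\ref{nonnegative-st}, \ref{nonnegative-sl}), and indeed $\alpha_i>2\pi$ is used only for the first bubble, which lives on the full plane $\R^2$. The dichotomy (i)/(ii)/(iii) is driven precisely by whether the first bubble is a super-Toda bubble (Case I, giving strict $>2\pi$ for both) or a super-Liouville bubble with one scalar field dropping out (Case II, giving $\ge 2\pi$ and $\ge 0$), with the punctured-plane bubbles contributing only nonnegativity. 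Second, you invoke ``the energy identity for the scalar parts that one proves alongside Theorem~\ref{engy-indt}''; the paper proves no such scalar energy identity and does not need it — the only facts required on the necks are the spinor energy vanishing (which controls $\int_{\text{neck}}e^{u_{1n}}|\psi_{1n}|^2$ via Cauchy--Schwarz against the bounded $\int e^{2u_{1n}}$) and the nonnegativity of the $\alpha_i$'s on the punctured plane.
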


 In the proof of Theorem \ref{Pblowupb}, we need  two crucial  facts. Firstly, the integral quantities defined in \eqref{entire-energy-sl} and \eqref{entire-energy-st} for finite energy solutions of \eqref{equ-1} and \eqref{equ-sl}  on $\R^2$ are all larger or equal to $2\pi$. Secondly, for finite energy solutions of \eqref{equ-1} and \eqref{equ-sl}  on $\R^2\setminus \{0\}$, one can define some integral quantities similarly to those in \eqref{entire-energy-sl} and \eqref{entire-energy-st} and it turns out that these quantities are all positive (see Theorem \ref{nonnegative-st} and  Theorem \ref{nonnegative-sl}).

\
\

 \section{Spinors and Geometric Properties of the Super-Toda System }

\

In this section, we will state some geometric properties of the super-Toda system. Let us start with some background about spin structures and spinors.
Let $(M,g)$ be a  closed Riemann surface and let $P_{SO(2)}\to M$ be its
oriented orthogonal frame bundle. A $Spin$-structure is a lift of
the structure group SO(2) to $Spin(2)$.

Let $\Sigma ^+M:=P_{Spin(2)}\times_{\rho}{\mathbb C}$ be a complex
line bundle over $M$ associated to $ P_{Spin(2)}$ and to the
standard representation $\rho:\S^1\to U(1)$. This is the bundle of
positive half-spinors. Its complex conjugate $\Sigma^-M:=
\overline {\Sigma^+M}$ is called the bundle of negative
half-spinors. The spinor bundle is $\Sigma M:=\Sigma ^+M\oplus
\Sigma ^-M.$
There exists a Clifford multiplication
\begin{eqnarray*}
TX\times_{\C} \Sigma^+M &\to &  \Sigma^-M \cr TX\times_{\C}
\Sigma^-M &\to &  \Sigma^+M \end{eqnarray*} denoted by $v\otimes
\psi \to v \cdot \psi$, which satisfies the  Clifford relations
\[ v\cdot w \cdot\psi + w\cdot v\cdot \psi =-2g(v,w) \psi,\]
for all $v,w \in TM$ and $\psi \in \Gamma(\Sigma M)$.

On the spinor bundle $\Sigma M$, there is a natural Hermitian
metric $ \la \cdot,\cdot \ra $. We also denote $|\cdot|^2= \la \cdot,\cdot \ra $.  Let $\nabla$ be
the Levi-Civita connection on $M$ with respect to $g$. Likewise, $\nabla$ induces a
connection (also denoted by $\nabla$) on  $\Sigma M$ that is compatible
with the Hermitian metric.

The Dirac operator $\slashiii{D}$ is defined by $\slashiii{D}\psi
:=\sum_{\alpha =1}^2e_{_\alpha }\cdot \nabla _{e_\alpha }\psi ,$
where $\left\{ e_1,e_2\right\} $ is a local orthogonal frame on $TM$.
For more details about spin geometry and Dirac operator, we
refer to \cite{LM}.

\
\
Next we will show that the fundamental conformal invariance of the action functional for the Toda-system is preserved in the super-Toda case.
\begin{prop}For any conformal diffeomorphism $\varphi :M\rightarrow M$  with $\varphi^*(g)=e^{2w} g$,  set
\beq\label{n3.1}\ba{rcl}
\ds\vs \widetilde{u_i} &=&\ds u_i\circ \varphi -w,  \\
\ds \widetilde{\psi_i }&=&\ds e^{-\frac 12 w}\psi_i \circ \varphi, \ea
\eeq
for $i=1,2$. Then if $(u_1,u_2,\psi_1,\psi_2)$ is a solution of
(\ref{equ-1}), so is $(\tilde u_1, \tilde u_2, \tilde \psi_1, \tilde \psi_2)$. That is,  the super-Toda system is conformally invariant.
\end{prop}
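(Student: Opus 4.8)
The plan is to verify directly that the transformation rules \eqref{n3.1} turn a solution of \eqref{equ-1} into another solution by computing how each term in the system transforms under a conformal change $\varphi^*g = e^{2w}g$. The key classical facts are: (i) if $\Delta$ and $\tilde\Delta$ are the Laplace--Beltrami operators of $g$ and $\tilde g = e^{2w}g$ on a surface, then $\tilde\Delta = e^{-2w}\Delta$ (pulled back appropriately via $\varphi$); (ii) the Gaussian curvatures satisfy $K_{\tilde g} = e^{-2w}(K_g - \Delta w)$, equivalently $-\Delta w + K_g = e^{2w} K_{\tilde g}$, and since $\varphi$ is a conformal diffeomorphism of $(M,g)$ onto itself we have $K_{\tilde g} = K_g \circ \varphi$; (iii) the Dirac operator is conformally covariant: under $\tilde g = e^{2w} g$ one has $\slashiii{D}_{\tilde g}(e^{-\frac12 w}\psi) = e^{-\frac32 w}\slashiii{D}_g \psi$, and again the pullback under the isometry-up-to-conformal-factor $\varphi$ contributes the composition with $\varphi$. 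I would first state these three ingredients, citing \cite{LM} for the Dirac covariance, and then simply substitute.

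First I would treat the scalar equations. Writing $\widetilde{u_i} = u_i\circ\varphi - w$, we get $-\Delta\widetilde{u_i} = -\Delta(u_i\circ\varphi) + \Delta w$. Using the conformal covariance of $\Delta$ on a surface and that $\varphi$ is conformal, $\Delta(u_i\circ\varphi) = e^{2w}\,(\Delta u_i)\circ\varphi$; plug in the $i$-th equation of \eqref{equ-1} for $(\Delta u_i)\circ\varphi$ to obtain $-\Delta\widetilde{u_i} = e^{2w}\big(\sum_j a_{ij}(e^{2u_j}-\tfrac12 e^{u_j}\langle\psi_j,\psi_j\rangle)\big)\circ\varphi - e^{2w}(K_g\circ\varphi) + \Delta w$. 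Now observe $e^{2w} e^{2u_j\circ\varphi} = e^{2\widetilde{u_j}}$ and $e^{2w} e^{u_j\circ\varphi}\langle\psi_j\circ\varphi,\psi_j\circ\varphi\rangle = e^{\widetilde{u_j}} \cdot e^{w}\langle\psi_j\circ\varphi,\psi_j\circ\varphi\rangle = e^{\widetilde{u_j}}\langle\widetilde{\psi_j},\widetilde{\psi_j}\rangle$ since $\langle\widetilde{\psi_j},\widetilde{\psi_j}\rangle = e^{-w}\langle\psi_j\circ\varphi,\psi_j\circ\varphi\rangle$; and $-e^{2w}(K_g\circ\varphi) + \Delta w = -K_g$ by ingredient (ii) combined with $K_{\tilde g} = K_g\circ\varphi$. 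This gives exactly the $i$-th scalar equation for the tilded fields. For the spinor equations, I would apply (iii): $\slashiii{D}\widetilde{\psi_i} = \slashiii{D}(e^{-\frac12 w}\psi_i\circ\varphi) = e^{-\frac32 w}(\slashiii{D}\psi_i)\circ\varphi = e^{-\frac32 w}(-e^{u_i}\psi_i)\circ\varphi = -e^{-\frac32 w} e^{u_i\circ\varphi}(\psi_i\circ\varphi) = -e^{\widetilde{u_i}} e^{-\frac12 w}(\psi_i\circ\varphi) = -e^{\widetilde{u_i}}\widetilde{\psi_i}$, as desired.

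The main obstacle is really bookkeeping rather than depth: one must be careful that the stated covariance laws for $\Delta$, $K$, and $\slashiii{D}$ are expressed with the correct signs, correct powers of $e^w$, and correct placement of the composition with $\varphi$ (since here $\varphi$ is a self-map and the conformal factor $w$ is itself a function on $M$, not just the geometric data of a change of background metric). In particular the half-density weight $e^{-\frac12 w}$ on spinors is exactly what makes the Dirac covariance produce the weight $e^{-\frac32 w}$ needed to match $e^{\widetilde{u_i}} = e^{-w} e^{u_i\circ\varphi}$ against $e^{u_i\circ\varphi}$ with one power of $e^{-\frac12 w}$ absorbed into each spinor factor. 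Once the three covariance identities are pinned down precisely, the verification is a termwise substitution with no remaining difficulty, and I would present it compactly, equation by equation.
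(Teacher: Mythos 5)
Your high-level plan matches the paper's sketch --- record the conformal covariance of $\Delta$, of $K_g$, and of $\slashiii{D}$, then substitute term by term --- but the bookkeeping you present does not actually close, and checking it carefully exposes a sign discrepancy with the transformation rules \eqref{n3.1} as stated. Concretely, with $\widetilde{u_j}=u_j\circ\varphi-w$ one has $e^{2\widetilde{u_j}}=e^{-2w}e^{2u_j\circ\varphi}$, so $e^{2w}e^{2u_j\circ\varphi}=e^{4w}e^{2\widetilde{u_j}}$, not $e^{2\widetilde{u_j}}$ as you assert; the Gauss equation together with $K_{\tilde g}=K_g\circ\varphi$ gives $e^{2w}\,(K_g\circ\varphi)=K_g-\Delta w$, hence $-e^{2w}(K_g\circ\varphi)+\Delta w=-K_g+2\Delta w$, not $-K_g$; and the combined naturality/conformal-covariance identity for the Dirac operator reads $\slashiii{D}_g\big(e^{w/2}\,\psi\circ\varphi\big)=e^{3w/2}\,(\slashiii{D}_g\psi)\circ\varphi$, with plus signs, rather than the version with $e^{-w/2}$ and $e^{-3w/2}$ that you use. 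Each of your three key substitutions therefore rests on a false algebraic identity, and they appear to cancel only because the target was assumed.

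What the correct covariance facts actually yield is that, for $\varphi^*g=e^{2w}g$, the rule that sends solutions to solutions is $\widetilde{u_i}=u_i\circ\varphi+w$, $\widetilde{\psi_i}=e^{w/2}\,\psi_i\circ\varphi$ (equivalently, keep \eqref{n3.1} verbatim but take $\varphi^*g=e^{-2w}g$). Indeed $\varphi\colon(M,e^{2w}g)\to(M,g)$ is an isometry, so $(u_i\circ\varphi,\psi_i\circ\varphi)$ solves the system with respect to $\tilde g=e^{2w}g$; the purely conformal rescaling step of the paper's sketch, $v\mapsto v-\sigma$, $\phi\mapsto e^{-\sigma/2}\phi$, sends a solution for a metric $h$ to one for $e^{2\sigma}h$, so passing from $\tilde g$ to $g=e^{-2w}\tilde g$ means $\sigma=-w$ and produces $u_i\circ\varphi+w$ and $e^{w/2}\psi_i\circ\varphi$. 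With these signs all of your steps do close: $e^{2w}e^{2u_j\circ\varphi}=e^{2\widetilde{u_j}}$, $-e^{2w}(K_g\circ\varphi)-\Delta w=-K_g$, and $\slashiii{D}\widetilde{\psi_i}=e^{3w/2}(\slashiii{D}\psi_i)\circ\varphi=-e^{3w/2}e^{u_i\circ\varphi}(\psi_i\circ\varphi)=-e^{\widetilde{u_i}}\widetilde{\psi_i}$. The lesson is that when a termwise substitution hands you an identity like $e^{2w}e^{2u_j\circ\varphi}=e^{2\widetilde{u_j}}$, you must actually verify it; here the verification fails, and that failure is the substantive point to report, not a detail to smooth over.
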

\begin{proof} The proof is similar to the case of the super-Liouville equation in Proposition 3.1 in \cite{JWZ}. Here we provide a sketch of the proof.

Let $\tilde g=\varphi^*g$ and let $\slashiii{\tilde{D}}$ be the Dirac operator with respect to the new metric $\tilde{g}$.
Notice that the relation between the two curvatures $R_g$ and $R_{\tilde{g}}$ is
$$
\Delta_{\tilde{g}}w+K_{\tilde{g}}=K_ge^{-2w},
$$
and the relation between the two Dirac operators $\slashiii{D}$ and
$\widetilde{\slashiii{D}}$ is
\[
\widetilde{\slashiii{D}}\widetilde{\psi }=e ^{-\frac 32w}\slashiii{D}(e ^{\frac 12w}
\widetilde{\psi })=e^{-\frac 32w}\slashiii{D}\psi.
\]
The conclusion can then be deduced by a direct computation.
\end{proof}

Analogously to the case of the Toda system \cite{JLW} and  of the  super-Liouville equation \cite{JWZ}, we can define a holomorphic quadratic differential associated to a solution of the super-Toda system.
\begin{prop}\label{pro-holo}
Let $(u_1,u_2,\psi_1,\psi_2)$ be a solution of (\ref{equ-1}) on  $M$. Let $z=x+iy$ be a local isothermal parameter with $g=\rho |dz|^2$. Then the quadratic differential
\[
T(z)dz^2=\left\{\sum_{j,k=1}^2 a^{jk}((u_k)_{zz}- (u_j)_z (u_k)_z-\frac 18\langle
\psi_k,dz\cdot\partial_{\bar z}\psi_k\rangle -\frac 18\langle
d\bar{z}\cdot\partial_z\psi_k,\psi_k\rangle ) \right \}dz^2
\]
is holomorphic if $M$ has constant scalar curvature. Here $dz=dx+idy$, $d\bar z =dx-idy$ and $(a^{ij})$ is the inverse of the Cartan matrix $(a_{ij})$.
\end{prop}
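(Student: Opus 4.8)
Since holomorphy of a quadratic differential is a local matter, the plan is to fix an isothermal parameter $z$ and to show that the coefficient $T(z)$ satisfies $\partial_{\bar z}T(z)=0$; that the bracket transforms correctly under a holomorphic change of $z$ is then checked exactly as for the classical Toda differential of \cite{JLW} and the super-Liouville differential of \cite{JWZ} (the non-tensorial pieces of the $(u_k)_{zz}$ combine with the background contribution $(\partial_z\log\rho)(u_k)_z$, and the spinor brackets transform with the conformal weight $-\tfrac12$ of the $\psi_k$). Accordingly I would write $T=T_{\mathrm b}+T_{\mathrm s}$ with
\[
T_{\mathrm b}=\sum_{j,k}a^{jk}\bigl((u_k)_{zz}-(u_j)_z(u_k)_z\bigr),\qquad
T_{\mathrm s}=-\tfrac18\sum_{j,k}a^{jk}\bigl(\langle\psi_k,dz\cdot\partial_{\bar z}\psi_k\rangle+\langle d\bar z\cdot\partial_z\psi_k,\psi_k\rangle\bigr),
\]
and compute $\partial_{\bar z}T_{\mathrm b}$ and $\partial_{\bar z}T_{\mathrm s}$ separately.

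For $T_{\mathrm b}$ I would rewrite the first two equations of \eqref{equ-1} in the chart as $(u_k)_{z\bar z}=\tfrac{\rho}{4}\Delta_g u_k=-\tfrac{\rho}{4}\bigl(\sum_i a_{ki}(e^{2u_i}-\tfrac12 e^{u_i}|\psi_i|^2)-K_g\bigr)$, together with the standard identity $(\log\rho)_{z\bar z}=-\tfrac12 K_g\rho$ for the background metric. Differentiating $(u_k)_{zz}-(u_j)_z(u_k)_z$ once more in $\bar z$, substituting these relations, and contracting with $a^{jk}$ using $\sum_k a^{jk}a_{ki}=\delta^j_i$, one should find: (a) the only place where the curvature of $g$ survives is a term proportional to $\partial_z K_g$, which vanishes by the constant scalar curvature hypothesis; (b) the terms carrying $\partial_z\rho$ cancel against the remaining $(\log\rho)_z$ contributions; (c) what is left is the computation already carried out for the closed $SU(3)$ Toda system in \cite{JLW}, in which the $e^{2u_i}$-terms cancel identically, plus a single purely spinorial remainder produced by the new source $-\tfrac12 e^{u_i}|\psi_i|^2$, a sum over $k$ of terms built from $e^{u_k}$, $\psi_k$ and their first derivatives. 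Thus $\partial_{\bar z}T_{\mathrm b}$ reduces to a purely spinorial expression.

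For $T_{\mathrm s}$ I would use the Dirac equations $\slashiii{D}\psi_k=-e^{u_k}\psi_k$, which in the isothermal spin frame are first-order relations expressing $\partial_{\bar z}$ of the $\Sigma^{+}M$-component and $\partial_z$ of the $\Sigma^{-}M$-component of $\psi_k$ in terms of $e^{u_k}\psi_k$ and terms involving derivatives of $\log\rho$ coming from the conformal behaviour of the Dirac operator. Applying $\partial_{\bar z}$ to $\langle\psi_k,dz\cdot\partial_{\bar z}\psi_k\rangle+\langle d\bar z\cdot\partial_z\psi_k,\psi_k\rangle$, trading each second derivative of $\psi_k$ for a first derivative of $e^{u_k}\psi_k$ via the Dirac equation, and then using the compatibility of $\nabla$ with the Hermitian metric together with the Clifford relations $v\cdot w\cdot\psi+w\cdot v\cdot\psi=-2g(v,w)\psi$, the $\log\rho$-terms should cancel pairwise and one should be left with exactly the negative of the spinorial remainder from $\partial_{\bar z}T_{\mathrm b}$. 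This is the super-Liouville computation of \cite{JWZ} done slot by slot, now premultiplied by $a^{jk}$; since $\sum_j a^{jk}$ acts on diagonal spinorial data only, the cancellation survives the Cartan contraction. Adding the two contributions gives $\partial_{\bar z}T=0$, so $T(z)\,dz^2$ is holomorphic.

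The step I expect to be the main obstacle is the spinorial computation of the previous paragraph: converting the second-order Dirac manipulation into the correct first-order expression, verifying that the $\log\rho$ terms really do drop out, and---most delicately---checking that the numerical factor $\tfrac18$ in $T_{\mathrm s}$ is exactly the one that makes $\partial_{\bar z}T_{\mathrm s}$ cancel the spinorial remainder generated by the factor $\tfrac12$ in front of $e^{u_i}|\psi_i|^2$ in \eqref{equ-1}. By comparison the Cartan-matrix bookkeeping is routine once the $N=1$ identity $\partial_{\bar z}\bigl(\tfrac12 u_{zz}-\tfrac12 u_z^2-\tfrac18\langle\psi,dz\cdot\partial_{\bar z}\psi\rangle-\tfrac18\langle d\bar z\cdot\partial_z\psi,\psi\rangle\bigr)=0$ is in hand, since $\sum_k a^{jk}a_{ki}=\delta^j_i$ recombines the diagonal sources $e^{2u_i}$ and $e^{u_i}|\psi_i|^2$ into precisely the combination occurring in that $N=1$ case.
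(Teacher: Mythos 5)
Your proposal follows essentially the same route as the paper's proof: you split $T$ into the Toda--bosonic part $T_{\mathrm b}$ and the spinor part $T_{\mathrm s}$ (the paper's $T_1$ and $-\tfrac18T_2$), differentiate in $\bar z$ using the scalar equations for $T_{\mathrm b}$ and the Dirac equations together with the spin-curvature identity and Clifford relations for $T_{\mathrm s}$, and observe that the spinorial remainders cancel, leaving only $\tfrac12(K_g)_z$. The paper handles the Cartan bookkeeping you flag as ``routine'' by the observation that the unit row sums of $(a^{jk})$ collapse $T_2$ to the diagonal sum $\sum_i(\langle\psi_i,dz\cdot\partial_{\bar z}\psi_i\rangle+\langle d\bar z\cdot\partial_z\psi_i,\psi_i\rangle)$, after which the super-Liouville computation of \cite{JWZ} applies slot by slot, confirming that $\tfrac18$ is exactly the right weight.
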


\begin{proof}Such a result for the case of super-Liouville equation was presented in \cite{JWZ}. Here we follow the same idea to prove this proposition.
Set
$$
T_1(z)=\sum_{j,k=1}^2 a^{jk}((u_k)_{zz}- (u_j)_z (u_k)_z),
$$
and
$$
T_2(z)=\sum_{j,k=1}^2 a^{jk}(\langle
\psi_k,dz\cdot\partial_{\bar z}\psi_k\rangle +\langle
d\bar{z}\cdot\partial_z\psi_k,\psi_k\rangle).
$$
Noting that
$$ (a^{ij})=\left(\begin{matrix}\frac 23 & \frac 13 \\ \frac 13 & \frac 23 \end{matrix}\right),$$
we have
$$
T_2(z)=(\langle
\psi_1,dz\cdot\partial_{\bar z}\psi_1\rangle +\langle
d\bar{z}\cdot\partial_z\psi_1,\psi_1\rangle)+(\langle
\psi_2,dz\cdot\partial_{\bar z}\psi_2\rangle +\langle
d\bar{z}\cdot\partial_z\psi_2,\psi_2\rangle).
$$
By using the symmmetry of $(a^{ij})$ and the Ricci curvature formula, we obtain
\begin{eqnarray*}
\partial_{\bar{z}}T_1(z)&=&\sum_{j,k=1}^2 a^{jk}((u_k)_{zz\bar{z}}- 2(u_j)_z (u_k)_{z\bar{z}})\\
&=&\frac 14\sum_{j,k=1}^2 a^{jk}((\triangle u_k)_{z}+2K_g(u_{k})_z-2(u_j)_z \triangle u_k)\\
&=&\frac 18\sum_{j=1}^2( -e^{u_j}\left| \psi_j \right| ^2 (u_j)_z+
e^{u_j}\partial_z\left| \psi_j \right|^2)+\frac 12(K_g)_z.
\end{eqnarray*}

Next we let $(e_1,e_2)$ be a local orthonormal basis on $M$ such that $\nabla_{e_\alpha}e_\beta=0$ . Recall the Clifford multiplication
$$
e_i\cdot e_j\cdot \psi+e_j\cdot e_i\cdot \psi=-2\delta_{ij}\psi
$$
and
$$
\langle\psi, \phi\rangle=\langle e_i\cdot\psi, e_i\cdot\phi\rangle
$$for $i,j=1,2$ and for any spinor $\psi,\phi$.
Therefore if we write
$$2\text{ Re }\langle \psi,\phi\rangle=
\langle \psi,\phi\rangle+\langle \phi,
\psi\rangle,
$$ we have $\text{ Re }\langle e_i\cdot \psi,\psi\rangle=0$, and $\text{Re}\left\langle \psi
,e_i\cdot \nabla _{e_j }\psi \right\rangle -\text{Re}\left\langle \psi
,e_j\cdot \nabla _{e_i }\psi \right\rangle=\text{Re}\left\langle e_i\cdot e_j\cdot\psi
,\slashiii{D}\psi \right\rangle$ for $i\neq j$. Therefore, for a solution of $(u_k,\psi_k)$,   $\text{Re}\left\langle \psi_k
,e_i\cdot \nabla _{e_j }\psi_k \right\rangle$ is symmetric with respect to the indices $i,j$.
Now we can compute that
\begin{eqnarray*}
&& \partial_{\bar{z}} (\langle
\psi_i,dz\cdot\partial_{\bar z}\psi_i\rangle +\langle
d\bar{z}\cdot\partial_z\psi_i,\psi_i\rangle)\\
&=&\frac 12\partial_{\bar{z}}(\langle(e_1-ie_2)\cdot(\nabla_{e_1}\psi_i-i\nabla_{e_2}\psi_i),\psi_i\rangle+
\langle\psi_i, (e_1+ie_2)\cdot(\nabla_{e_1}\psi_i+i\nabla_{e_2}\psi_i)\rangle)\\
&=&
\partial_{\bar{z}}(\text{Re}\langle\psi_i,e_1\cdot\nabla_{e_1}\psi_i\rangle
-2i\text{Re}\langle\psi_i,e_1\cdot\nabla_{e_2}\psi_i\rangle-\text{Re}\langle\psi_i,e_2\cdot\nabla_{e_2}\psi_i\rangle)\\
&=& \frac 12 (-2i\text{Re}\langle
\nabla _{e_1}\psi_i,e_1\cdot\nabla_{e_2}\psi_i\rangle-\text{Re}\langle
\nabla
_{e_1}\psi_i,e_2\cdot\nabla_{e_2}\psi_i\rangle)\\
&& +\frac 12(i\text{Re}\langle \nabla
_{e_2}\psi_i,e_1\cdot\nabla_{e_1}\psi_i\rangle+2\text{Re}\langle
\nabla
_{e_2}\psi_i,e_2\cdot\nabla_{e_1}\psi_i\rangle)\\
&& +\frac 12 (\text{Re}\langle
\psi_i,e_1\cdot\nabla_{e_1}\nabla_{e_1}\psi_i\rangle-2i\text{Re}\langle
\psi_i,e_1\cdot\nabla_{e_1}\nabla_{e_2}\psi_i\rangle-\text{Re}\langle
\psi_i,e_2\cdot\nabla_{e_1}\nabla_{e_2}\psi_i\rangle)\\
&&+\frac 12 (i\text{Re}\langle
\psi_i,e_1\cdot\nabla_{e_2}\nabla_{e_1}\psi_i\rangle+2\text{Re}\langle
\psi_i,e_2\cdot\nabla_{e_2}\nabla_{e_1}\psi_i\rangle-i\text{Re}\langle
\psi_i,e_2\cdot\nabla_{e_2}\nabla_{e_2}\psi_i\rangle).\\
\end{eqnarray*}
By using the definition of  the
curvature operator $R^{\Sigma M}$  of the connection $\nabla $ on
the spinor bundle $\Sigma M$, that is
\[
\nabla _{e_{\alpha} }\nabla _{e_{\beta }}\psi -\nabla _{e_{\beta}
}\nabla _{e_{\alpha }}\psi =R^{\Sigma M}(e_{\alpha },e_{\beta}
)\psi ,
\]
and a formula for this curvature operator (see e.g. \cite{Jo})
\[
\sum_{\alpha =1}^2e_{\alpha} \cdot R^{\Sigma M}(e_{\alpha} ,X)\psi
=\frac 12Ric(X)\cdot \psi ,\text{ \qquad for }\forall X\in \Gamma
(TM)
\]
we have
$$
\text{Re}\langle \psi,e_2\cdot R^{\Sigma M
}(e_1,e_2)\psi\rangle=\text{Re}\langle \psi,\frac 12
Ric(e_1)\cdot\psi\rangle=0,
$$
and
$$
\text{Re}\langle \psi,e_1\cdot R^{\Sigma M
}(e_1,e_2)\psi\rangle=\text{Re}\langle \psi,\frac 12
Ric(e_2)\cdot\psi\rangle=0.
$$
Hence we obtain that
\begin{eqnarray*}
&& \partial_{\bar{z}} (\langle
\psi_i,dz\cdot\partial_{\bar z}\psi_i\rangle +\langle
d\bar{z}\cdot\partial_z\psi_i,\psi_i\rangle)\\
&=& \frac 12(-3\text{Re}\langle \nabla
_{e_1}\psi_i,e_2\cdot\nabla_{e_2}\psi_i\rangle+3i\text{Re}\langle
\nabla
_{e_2}\psi_i,e_1\cdot\nabla_{e_1}\psi_i\rangle)\\
&& +\frac 12(\text{Re}\langle
\psi_i,\nabla_{e_1}(\slashiii{D}\psi_i)\rangle-i\text{Re}\langle
\psi_i,\nabla_{e_2}(\slashiii{D}\psi_i)\rangle)\\
& =& \frac 12(-3\text{Re}\langle \nabla
_{e_1}\psi_i,\slashiii{D}\psi_i-e_1\cdot\nabla_{e_1}\psi_i\rangle+3i\text{Re}\langle
\nabla
_{e_2}\psi_i,\slashiii{D}\psi_i-e_2\cdot\nabla_{e_2}\psi_i\rangle)\\
&& +\frac 12(\text{Re}\langle
\psi_i,\nabla_{e_1}(\slashiii{D}\psi_i)\rangle-i\text{Re}\langle
\psi_i,\nabla_{e_2}(\slashiii{D}\psi_i)\rangle)\\
&=& e^{u_i}\partial_z|\psi_i|^2- e^{u_i}|\psi_i|^2(u_i)_z.
\end{eqnarray*}
Consequently we have
$$
\partial_{\bar{z}}T(z)=\partial_{
\bar{z}}T_1(z) -\frac 18\partial_{\bar{z}}T_2(z)=\frac{1}{2}(K_g)_z.
$$
That means $T(z)$ is holomorphic if $K_g$ is constant.  This completes the proof.
\end{proof}

\

\section{Asymptotic behavior of entire solutions}

\

In this section, we will analyze the asymptotic behavior near  infinity of an entire solution $(u_1,u_2, \psi_1,\psi_2)$ of \eqref{bequ-1} on $\R^2$ with finite energy.

Consider the  Kelvin transformation
\begin{eqnarray*}
&& v_i(x)=u_i(\frac {x}{|x|^2})-2\ln |x|,\\
&& \phi_i (x)=|x|^{-1} \psi_i (\frac{x}{|x|^2}),
\end{eqnarray*}
for $i=1,2$.
By conformal invariance, $(v_1,v_2,\phi_1,\phi_2)$  satisfies
\begin{equation}\label{sequ-1}
\left\{
\begin{array}{rcll}
-\Delta v_1 &=& 2e^{2v_1}-e^{2v_2}-e^{v_1}\left\langle \phi_1 ,\phi_1
\right\rangle+\frac 12 e^{v_2}\left\langle \phi_2 ,\phi_2
\right\rangle,&\qquad x\in \R^2\backslash\{0\}\\
-\Delta v_2 &=& 2e^{2v_2}-e^{2v_1}-e^{v_2}\left\langle \phi_2 ,\phi_2
\right\rangle+\frac 12 e^{v_1}\left\langle \phi_1 ,\phi_1
\right\rangle,&\qquad x\in \R^2\backslash\{0\}\\
\slashiii{D}\phi_1 &=&\ds  -e^{v_1}\phi_1, &\qquad x\in
\R^2\backslash\{0\}\\
\slashiii{D}\phi_2 &=&\ds  -e^{v_2}\phi_2, &\qquad x\in
\R^2\backslash\{0\}.
\end{array}
\right.
\end{equation}

Therefore, from Lemma 6.2 in \cite{JWZ}, we have the following asymptotic behavior near the origin for the new spinors $\phi_i$:

\begin{lm}\label{asy-psi}
There is a small positive constant  $\varepsilon_0 $, such that,  if $(v_1,v_2,\phi_1,\phi_2)$ is a smooth
solution to (\ref{sequ-1}) in $B_1\backslash\{0\}$ with energy
$\int_{|x|\leq 1}e^{v_i}dx<\varepsilon_0$ and $\int_{|x|\leq
1}|\phi_i|^4dx<C$ for $i=1,2$, then for any $x\in B_{\frac {1}{2}}$ we have
\begin{equation}\label{asy-psi-1}
|\phi_i(x)||x|^{\frac 12}+|\nabla\phi_i(x)||x|^{\frac 32}\leq
C(\int_{B_{2|x|}}|\phi_i|^4dx)^{\frac 14}, \quad i=1, 2.
\end{equation}
Furthermore, if we assume that $e^{2v_i}=O(\frac
{1}{|x|^{2-\varepsilon}})$ for $i=1,2$, then,  for any $x\in B_{\frac 12}$, we
have
\begin{equation}\label{asy-psi-2}
|\phi_i(x)||x|^{\frac 12}+|\nabla\phi_i(x)||x|^{\frac 32}\leq
C|x|^{\frac {1}{4C}}(\int_{B_1}|\phi_i|^4dx)^{\frac 14}, \quad i=1, 2.
\end{equation}
for some positive constant $C$. Here $\varepsilon$ is any
sufficiently small positive number.
\end{lm}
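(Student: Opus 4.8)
The plan is to reduce everything to Lemma~6.2 of \cite{JWZ}, exploiting that the spinor equations in \eqref{sequ-1} are literally the super-Liouville Dirac equations: each $\phi_i$ solves $\slashiii{D}\phi_i=-e^{v_i}\phi_i$ on $B_1\setminus\{0\}$, uncoupled from $\phi_{3-i}$, and the proof of that lemma uses only two facts about the companion scalar, namely a small local energy and a one-sided bound of $-\Delta v_i$ by a function with small $L^1$ norm on small balls. The first step, and the only genuinely new point, is to recover such a bound from the Toda equations. In \eqref{sequ-1} the terms $-e^{2v_{3-i}}$ and $-e^{v_i}|\phi_i|^2$ are nonpositive, so
\[
-\Delta v_i\le 2e^{2v_i}+\tfrac12 e^{v_{3-i}}|\phi_{3-i}|^2=:g_i,
\]
and in the present setting $g_i\in L^1(B_1)$: $e^{2v_j}$ and $|\phi_j|^4$ are integrable (by the energy hypotheses, cf.\ \eqref{bequ-2}) while $e^{v_j}|\phi_j|^2\le\tfrac12(e^{2v_j}+|\phi_j|^4)$. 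By absolute continuity, $\|g_i\|_{L^1(\{r/4<|x|<4r\})}\to 0$ as $r\to 0$, and together with $\int_{|x|\le 1}e^{v_i}<\varepsilon_0$ this is exactly the input required by the Brezis--Merle/rescaling part of the proof of Lemma~6.2.

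Granting this, I would obtain \eqref{asy-psi-1} as in \cite{JWZ}. For $x\in B_{1/2}$ with $r:=|x|$ small, rescale $x=ry$ by setting $\hat v_i(y)=v_i(ry)+\ln r$ and $\hat\phi_i(y)=r^{1/2}\phi_i(ry)$; this conformally natural change preserves \eqref{sequ-1} and makes $\int e^{2\hat v_i}$, $\int|\hat\phi_i|^4$ and $\|\hat g_i\|_{L^1}$ small on a fixed annulus around $|y|=1$. A Brezis--Merle estimate then bounds $\hat v_i$ from above there, so $\slashiii{D}\hat\phi_i=-e^{\hat v_i}\hat\phi_i$ has a bounded potential, and interior $L^p$ and Schauder estimates for the Dirac operator bootstrap $\hat\phi_i$ from $L^4$ to $L^\infty$ and then control $\nabla\hat\phi_i$, yielding $|\hat\phi_i(y)|+|\nabla\hat\phi_i(y)|\le C(\int_{|z|<2}|\hat\phi_i|^4)^{1/4}$ at $|y|=1$. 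Undoing the scaling gives \eqref{asy-psi-1}; the remaining scales $r\in[r_0,\tfrac12]$ are immediate, $\phi_i$ being smooth there.

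For \eqref{asy-psi-2}, I would use that the extra assumption $e^{2v_i}=O(|x|^{-(2-\varepsilon)})$ means $e^{v_i}=O(|x|^{-1+\varepsilon/2})$, which is strictly subcritical, hence $e^{v_i}\in L^p(B_1)$ for some $p>2$. Feeding this into $\slashiii{D}\phi_i=-e^{v_i}\phi_i$ with $\phi_i\in L^4$ gives $e^{v_i}\phi_i\in L^q$ with $q>4/3$, i.e.\ a genuine integrability gain; iterating the Dirac elliptic estimate over the dyadic annuli $A_j=\{2^{-j-1}\le|x|\le 2^{-j}\}$, exactly as in \cite{JWZ}, upgrades \eqref{asy-psi-1} to the H\"older-type weighted bound \eqref{asy-psi-2}. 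This step uses only the Dirac equation and the improved integrability of $e^{v_i}$, so it carries over verbatim.

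The only place where the super-Toda structure intervenes is the first step: one has to check that discarding the coupling terms, which have the favorable sign, still leaves an admissible right-hand side $g_i$, so that the scalar hypotheses of Lemma~6.2 of \cite{JWZ} hold for each pair $(v_i,\phi_i)$ separately. I expect this to be the main --- and a rather mild --- obstacle; everything after it is the super-Liouville argument applied to $i=1$ and $i=2$ in turn.
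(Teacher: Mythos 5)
Your proposal matches the paper's approach: the paper's entire proof of this lemma is a one-line citation of Lemma~6.2 of \cite{JWZ}, applied to each pair $(v_i,\phi_i)$ separately, which is precisely the reduction you carry out. Your verification that the Toda coupling terms are harmless --- the one-sided bound $-\Delta v_i\le 2e^{2v_i}+\tfrac12 e^{v_{3-i}}|\phi_{3-i}|^2$ with an $L^1$ right-hand side whose norm shrinks on small annuli by absolute continuity, followed by the decoupled Dirac equation and the rescaling/bootstrap of \cite{JWZ} --- supplies exactly the details that the paper leaves implicit in its citation.
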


By using the Kelvin transformation again, we obtain the asymptotic behavior near  infinity for the original spinors $\psi_i$:
\begin{equation}\label{asy-psi-3}
|\psi_i (x)|\leq C|x|^{-\frac 12-\delta_0}\qquad \text{for}\quad |x|
\quad \text{near}\quad \infty
\end{equation}
for some positive number $\delta_0$,  provided that $e^{2u_i}=O(\frac
{1}{|x|^{2+\varepsilon}})$ near $\infty$.

\

The next lemma gives some regularity property for an entire solution.

\begin{lm}\label{blm}
Let $(u_1,u_2, \psi_1,\psi_2)$ be a solution of (\ref{bequ-1}) and (\ref{bequ-2}) with
$u_i\in H^{1,2}_{loc}(\R^2)$ and $\psi_i\in H^{1,\frac
43}_{loc}(\R^2)$ for $i=1,2$. Then $u_i^{+}\in L^{\infty}(\R^2)$ for $i=1,2$.
\end{lm}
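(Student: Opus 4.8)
The plan is to combine a Brezis--Merle type argument on balls of a small fixed radius with an elliptic bootstrap, using the global finiteness of the energy \eqref{bequ-2} to make the relevant $L^1$ masses small and, crucially, to keep all constants independent of the base point so as to obtain the \emph{global} bound $u_i^+\in L^\infty(\R^2)$. Since $e^{2u_i}\in L^1(\R^2)$ by hypothesis and $e^{u_j}|\psi_j|^2\le\tfrac12(e^{2u_j}+|\psi_j|^4)\in L^1(\R^2)$, the right-hand side $f_i$ of the $i$-th scalar equation in \eqref{bequ-1} lies in $L^1(\R^2)$. Because an $L^1$ function on $\R^2$ has uniformly small mass on small balls (approximate it in $L^1$ by a continuous compactly supported function), there is $r_0\in(0,1]$, depending only on the solution, such that $\int_{B_{r_0}(x)}\big(e^{2u_1}+e^{2u_2}+e^{u_1}|\psi_1|^2+e^{u_2}|\psi_2|^2\big)<\eta$ for every $x\in\R^2$, where $\eta>0$ is at our disposal.

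Fix $x_0\in\R^2$ and write, on $B:=B_{r_0}(x_0)$, $u_i=w_i+h_i$, where $-\Delta w_i=f_i$ in $B$ with $w_i=0$ on $\partial B$ and $h_i:=u_i-w_i$ is harmonic (hence smooth) by Weyl's lemma. The Brezis--Merle estimate gives $\int_B e^{q|w_i|}\,dx\le C(r_0)$ with $q=q(\eta)\to\infty$ as $\eta\to0$, so for $\eta$ small enough $e^{|w_i|}\in L^p(B)$ for all $p<\infty$, with bounds independent of $x_0$; in particular $\int_B|w_i|\le C$ uniformly. For $h_i$, Jensen's inequality applied to $e^{2u_i^+}\le e^{2u_i}+1$ bounds the mean of $u_i^+$ over $B_1(x_0)$ uniformly in $x_0$, hence $\int_B h_i^+\le\int_B u_i^++\int_B|w_i|\le C$ uniformly; the mean value property of harmonic functions then yields $\sup_{B_{r_0/2}(x_0)}h_i\le C$ uniformly in $x_0$. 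Therefore $u_i\le|w_i|+C$ on $B_{r_0/2}(x_0)$, and so $e^{u_i}\in L^p\big(B_{r_0/2}(x_0)\big)$ for every $p<\infty$, again with uniform bounds.

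Next I would bootstrap the spinors and then the scalars. Since $\psi_i\in L^4(\R^2)$ by \eqref{bequ-2} and $e^{u_i}\in L^4_{loc}$ with uniform bounds, Hölder gives $e^{u_i}\psi_i\in L^2_{loc}$, so $\slashiii{D}\psi_i\in L^2_{loc}$; interior elliptic estimates for the Dirac operator then give $\psi_i\in W^{1,2}_{loc}\hookrightarrow L^p_{loc}$ for all $p<\infty$, with bounds uniform in the base point. Consequently $e^{u_i}|\psi_i|^2$ and $e^{2u_i}$ lie in $L^s_{loc}$ for some (indeed every) $s>1$, whence $f_i\in L^s_{loc}$ with uniform bounds. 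Splitting once more on $B_{r_0/2}(x_0)$ as $u_i=\tilde w_i+\tilde h_i$ with $-\Delta\tilde w_i=f_i$ and $\tilde w_i=0$ on the boundary, the potential part satisfies $\tilde w_i\in W^{2,s}\hookrightarrow C^0$ with $\|\tilde w_i\|_{C^0}\le C$ uniformly, while $\tilde h_i$ is harmonic with $\sup_{B_{r_0/4}(x_0)}\tilde h_i\le C$ uniformly by the same mean value argument. Hence $u_i\le C$ on $B_{r_0/4}(x_0)$ with $C$ independent of $x_0$, and since $x_0$ was arbitrary, $u_i^+\in L^\infty(\R^2)$.

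The main obstacle is the nonlinear exponential term $e^{2u_i}$, which obstructs a naive elliptic bootstrap; the Brezis--Merle estimate is the tool that overcomes it, but only after one uses the global energy bound to localize to balls on which the $L^1$ mass of the right-hand side is small. The secondary point requiring care is uniformity in the center $x_0$: this is what upgrades the local $L^\infty$ bounds to the global statement, and it relies on the uniform smallness of $\int_{B_{r_0}(x)}(\cdots)$ and on the uniform control of the mean of $u_i^+$ over $B_1(x)$ provided by Jensen's inequality.
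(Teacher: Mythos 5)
Your argument is correct and follows essentially the same route as the paper's proof: a Brezis--Merle decomposition of $u_i$ into a potential part with exponential integrability and a harmonic part bounded from above via the mean-value property together with the elementary estimate $\int_{B} u_i^{+}\le \int_B e^{u_i}\le C$, followed by a bootstrap of the spinor equation and then the scalar equation. The only inessential variation is in how you arrange the small $L^1$-mass needed for Brezis--Merle: you shrink the ball radius $r_0$ uniformly via absolute continuity of $\int f_i$, whereas the paper keeps the radius fixed at $3$ and instead truncates $f_i$ into a uniformly small $L^1$ piece plus a globally $L^\infty$ piece.
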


\begin{proof} We have the Brezis-Merle inequality \cite{BM}:
\begin{equation}\label{BMInq}
\int_{\Omega}\text{exp}\{\frac{(4\pi-\delta)|u(x)|}{||f||_{L^1(\Omega)}}\}dx\leq \frac {4\pi^2}{\delta}(\text { diam } \Omega)^2
\end{equation}
which holds for every $\delta \in (0,4\pi)$ and for every solution $u$ of
\begin{equation*}
\left\{
\begin{array}{rcll}
-\triangle u&=& f(x)\qquad &\text{ in } \Omega\\
u&=& 0\qquad &\text{ on } \partial \Omega
\end{array}
\right.
\end{equation*}
with $f\in L^1(\Omega)$ and $\Omega$  a bounded domain in $\R^2$.

To prove this lemma, by conformal invariance, it suffices to show that , for any $x_0 \in \R^{2}$, $u^+_i$  is bounded  on $B_{1}(x_0)$ and the bounds are independent of the point $x_0$. In the rest of the proof, we shall use $C$ to denote various constants independent of $x_0$.

We consider $u_1$ first. Denote $$f_1=2e^{2u_1}-e^{2u_2}-e^{u_1}\left\langle \psi_1 ,\psi_1
\right\rangle +\frac 12 e^{u_2}\left\langle \psi_2 ,\psi_2
\right\rangle.$$
It follows from the energy condition (\ref{bequ-2}) that $f_1 \in L^1(B_3(x_0))$. We write $f_1=f_{11}+f_{12}$ with
$||f_{11}||_{L^1(B_3(x_0))}\leq \pi$ and $f_{12}\in
L^{\infty}(B_3(x_0))$.
Define $u_{11}$, $u_{12}$ by
\begin{equation*}
\left\{
\begin{array}{rcll}
-\triangle u_{11} &=& f_{11}  &\qquad \text{in} \quad B_{3}(x_0)\\
u_{11}&=& 0, &\qquad\text{on}\quad \partial B_{3}(x_0).
\end{array}
\right. \end{equation*}
and
\begin{equation*}
\left\{
\begin{array}{rcll}
-\triangle u_{12} &=& f_{12}  &\qquad \text{in} \quad \quad B_{3}(x_0)\\
u_{12} &=& 0, &\qquad\text{on}\quad \partial B_{3}(x_0)
\end{array}
\right.
\end{equation*}
By using the inequality (\ref{BMInq}) we have
$$
\int_{B_3(x_0)}\exp(2|u_{11}|)dx\leq C,
$$
hence $||u_{11}||_{L^q(B_3(x_0))}\leq C$ for any $q\geq 1$. We also have
$$
||u_{12}||_{L^{\infty}(B_3(x_0))}\leq C.
$$
\noindent Now let
$u_{13}=u_1-u_{11}-u_{12}$. Then  $u_{13}$ is a harmonic function in $B_{3}(x_0)$. The mean value theorem for harmonic functions implies that
$$
||u^+_{13}||_{L^{\infty}(B_2(x_0))}\leq C ||u^+_{13}||_{L^{1}(B_3(x_0))}.
$$
Since $u^+_{13}\leq u^+_{1} +|u_{11}|+|u_{12}|$, and $
\int_{ B_{3}(x_0)}{u_1}^+dx\leq
\int_{ B_{3}(x_0)}e^{u_1}dx<\infty,
$
we get
$$
||u^+_{13}||_{L^{\infty}(B_{2}(x_0))}\leq C.
$$
For $u_2$, by a similar argument as for $u_1$, we can write $u_2=u_{21}+u_{22}+u_{23}$ with
$$
||u_{21}||_{L^q(B_3(x_0))}\leq C,\qquad ||u_{22}||_{L^{\infty}(B_3(x_0))}\leq C, \qquad ||u^+_{23}||_{L^{\infty}(B_{2}(x_0))}\leq C.
$$
It is clear that $$||\psi_i||_{L^{\infty}(B_2(x_0))}\leq C$$ for $i=1,2$.  If we rewrite $f_1$ as
$$
f_1=2e^{2u_{12}+2u_{13}}e^{2u_{11}}-e^{2u_{22}+2u_{23}}e^{2u_{21}}-e^{u_{12}+u_{13}}e^{u_{11}}\left\langle \psi_1 ,\psi_1
\right\rangle +\frac 12 e^{u_{22}+u_{23}} e^{u_{21}}\left\langle \psi_2 ,\psi_2
\right\rangle
$$
then we know that $||f_1||_{L^q(B_{2}(x_0))}\leq C$ for any $q>1$. Then standard elliptic estimates imply that
$$
||u_{1}^+||_{L^{\infty}(B_{1}(x_0))}\leq C||u_{1}^+||_{L^{1}(B_{2}(x_0))}+C||f_1||_{L^q(B_{2}(x_0))}\leq C.
$$
Similarly, we also have
$$
||u_{2}^+||_{L^{\infty}(B_{1}(x_0))}\leq C.
$$
%By applying classical elliptic estimates for the Dirac operator to the spinor equations, it is easy to see %that $|\psi_i|\in L^{\infty}_{loc}(\R^2)$ for $i=1,2$.
%This completes the proof.
\end{proof}

 % and  constant spinors $\xi_i=\int_{\R^2}e^{\frac {u_i}2}\psi_i dx$ for $i=1,2$. In the sequel we will find that the constant spinor $\xi_i$ is well defined.
 Next we will analyze the asymptotic behavior near infinity of an entire solution. We shall apply  standard potential analysis as in \cite{CL1} and \cite{JWZ}.

\

\noindent{\bf Proof of Theorem \ref{asyth}:}
First, for $k=1,2$  we define
$$
w_k(x)=-\frac{1}{2\pi}\int_{\R^2}(\ln{|x-y|}-\ln{(|y|+1)})( \sum_{i=1}^{2}a_{ki}(e^{2u_i}-\frac 12e^{u_i}\left\langle \psi_i ,\psi_i
\right\rangle ))dy,
$$
 Since $\sum_{i=1}^{2}a_{ki}(e^{2u_i}-\frac 12e^{u_i}\left\langle \psi_i ,\psi_i
\right\rangle )$ is $L^1$ integrable in $\R^2$, we get
$$
\frac {w_k(x)}{\ln{|x|}}\rightarrow -\frac
{1}{2\pi}\int_{\R^2} \sum_{i=1}^{2}a_{ki}(e^{2u_i}-\frac 12e^{u_i}\left\langle \psi_i ,\psi_i
\right\rangle )dx=-\frac{\beta_k}{2\pi}
$$ as $ |x|\rightarrow +\infty $ uniformly.
Furthermore,  we have $-\triangle w_k(x)=\sum_{i=1}^{2}a_{ki}(e^{2u_i}-\frac 12e^{u_i}\left\langle \psi_i ,\psi_i
\right\rangle )$ on $\R^2$. Therefore, if we define
$v_k=u_k(x)-w_k(x)$, then $\triangle v_k(x)=0$ on $\R^2$.
Since $u^{+}_{k}\in L^{\infty}(\R^2)$ by Lemma \ref{blm}, we get that
$$
v_k(x)\leq C_1+C_2\ln{|x|},
$$
for $|x|$ sufficiently large, with $C_1$ and $C_2$ being two positive constants.
Therefore, by Liouville's theorem on harmonic functions, $v_k(x)$
has to be constant and hence we get
$$
\frac {u_k(x)}{\ln{|x|}}\rightarrow -\frac {\beta_k}{2\pi}\qquad
\text{as} \quad |x|\rightarrow +\infty, \quad \text{uniformly}.
$$
Since $\int_{\R^2}e^{2u_k}dx<+\infty$, the above result implies
$$
\beta_k \geq 2\pi.
$$

Next,  we will show that $\beta_k >2\pi$ for $k=1,2$. Suppose this is not true, we assume without loss of generality that $\beta_1=2\pi$.

\

\noindent {\bf Claim.} $\beta_2=2\pi$.

\

We shall prove the claim by contradiction.  We assume that $\beta_2>2\pi$. Let $(v_k,\phi_k)$ be the Kelvin transformation of
$(u_k,\psi_k)$. Then $(v_k,\phi_k)$ satisfy (\ref{sequ-1}) in
$B_1\backslash\{0\}$. Denote
$$
f_1=2e^{2v_1}-e^{2v_2}-e^{v_1}\left\langle \phi_1 ,\phi_1
\right\rangle+\frac 12 e^{v_2}\left\langle \phi_2 ,\phi_2
\right\rangle,
$$
then we have
$$
-\triangle v_1=f_1(x) \qquad \text{in}\quad B_\delta\backslash\{0\}.
$$
From the asymptotic estimate (\ref{asy-psi-1}) of $\phi_i$, we know that
$f_1(x)>0$ in a small punctured disk $B_\delta\backslash\{0\}$. Set
$$
h(x)=-\frac{1}{2\pi}\int_{B_\delta}\log{|x-y|f_1(y)dy}
$$
and let $g(x)=v_1(x)-h(x)$. It is easy to see that $\triangle h=-f_1$ and
$\triangle g=0$.

On the other hand, we can verify that
$$
\lim_{|x|\rightarrow 0}\frac {h}{-\log{|x|}}=0,
$$
and it follows that
$$
\lim_{|x|\rightarrow 0}\frac
{g(x)}{-\log{|x|}}=\lim_{|x|\rightarrow 0}\frac
{v_1(x)-h(x)}{-\log{|x|}}=\lim_{|x|\rightarrow 0}\frac {u_1(\frac
{x}{|x|^2})-2\log{|x|}}{-\log{|x|}}=1.
$$
Since $g(x)$ is harmonic in $B_\delta\backslash\{0\}$, we get
$g(x)=-\log{|x|}+g_0(x)$ with a smooth harmonic function $g_0$ in
$B_\delta$. By definition, we have $h(x)>0$. Thus, we obtain
$$
\int_{B_\delta}e^{2v_1}dx=\int_{B_\delta}e^{2g+2h}dx\geq \int_{B_\delta}\frac
{1}{|x|^2}e^{2g_0}dx=+\infty,
$$
which is a contradiction with $\int_{\R^2}e^{2v_1}dx<\infty$. Hence
the claim holds.

\

Next let us show that $\beta_1=\beta_2=2\pi$ is impossible. We assume by contradiction that $\beta_k=2\pi$ for $k=1, 2$. We consider $w=\frac 23 v_1+\frac 13 v_2$. It is clear that
$$
-\triangle w=e^{2v_1}-\frac 12 e^{v_1}|\phi_1|^2
\qquad \text { in } B_\delta \backslash \{0\},$$
with
$$
\frac{w(x)}{\ln{|x|}} \rightarrow -1 \qquad
\text{as} \quad |x|\rightarrow 0 \quad \text{uniformly},
$$
and
$$
\int_{\R^2}e^{2w}dx<\infty.
$$ Since one can check that $e^{2v_1}-\frac 12 e^{v_1}|\phi_1|^2>0$ in a small punctured disk $B_\delta \backslash \{0\}$, we  can get a contradiction by applying a similar argument as in the proof of the above claim.

Thus we have shown that $\beta_k>2\pi$ for $k=1, 2$.

\

At this point, we can prove the asymptotic estimate (\ref{ayu}). Since  $\beta_i>2\pi$ for $i=1,2$, we can improve the estimate for $e^{2u_i}$ to the following
\begin{equation*}
e^{2u_i}\leq C|x|^{-2-\varepsilon}\qquad \text{for}\quad |x| \quad
\text{near}\quad \infty
\end{equation*} for $i=1,2 $. Then by using  potential analysis we  get
$$
-\frac{\beta_i}{2\pi}\ln{|x|}-C\leq u_i(x)\leq
-\frac{\beta_i}{2\pi}\ln{|x|}+C
$$
for some constant $C>0$, see \cite{CL2}.

By applying a similar argument as in the derivation of
gradient estimates in \cite{CK}, we get
\begin{equation*}
|\langle x, \nabla u_i \rangle+\frac{\beta_i}{2\pi}|\leq
C|x|^{-\varepsilon} \qquad \text{for} \quad |x|\quad
\text{near}\quad \infty.
\end{equation*}
Consequently we have
\begin{equation*}
|u_{ir}+\frac{\beta_i}{2\pi r}|\leq C|x|^{-1-\varepsilon} \qquad
\text{for} \quad |x|\quad \text{near}\quad \infty.
\end{equation*}
Similarly, we get
\begin{equation*}
|u_{i \theta}|\leq C|x|^{-\varepsilon} \qquad \text{for} \quad
|x|\quad \text{near}\quad \infty.
\end{equation*}
Here $(r,\theta)$ is the polar coordinate system on $\R^2$ and $C,\
\varepsilon$ are  positive constants. In particular, we derive the asymptotic estimate (\ref{ayu}).

\

For $\psi_i$, since $\beta_i>2\pi$ for $i=1,2$, it follows from (\ref{asy-psi-3}) that (\ref{asy-psi-4}) holds.

%actually we can show that $\beta_1=\beta_2=8\pi$ in the following proposition and theorem. Thus we can improve %the estimate for $e^{u_i}$ to
%$$ e^{u_i}\leq C|x|^4\qquad \text { for } |x| \text { near } \infty.
%$$ This implies that the constant spinor $\xi_i$ is well defined. Consequently we can get (\ref{aypsi}), see %\cite{JWZ}
\

% Furthermore, we can deduce  a relation between $\beta_1$ and $\beta_2$, and also $\alpha_1$ and $\alpha_2$.
%\begin{prop}\label{re-pro}
%Let $\alpha_i$ and $\beta_i$ as above. Then we have
%$$
%\alpha_1^2+\alpha_2^2-\alpha_1\alpha_2=4\pi(\alpha_1+\alpha_2),
%$$
%$$
%\beta_1^2+\beta_2^2+\beta_1\beta_2=12\pi(\beta_1+\beta_2).
%$$
%\end{prop}

Now we set \[
T(z)dz^2=\{\sum_{j,k=1}^2 a^{jk}((u_k)_{zz}-(u_j)_z (u_k)_z-\frac 18\langle
\psi_k,dz\cdot\partial_{\bar z}\psi_k\rangle -\frac 18\langle
d\bar{z}\cdot\partial_z\psi_k,\psi_k\rangle ) \}dz^2
\]
where $\cdot$ is the Clifford multiplication. We know from Proposition
\ref{pro-holo} that $T(z)$ is holomorphic in $\R^2$. By using the asymptotic estimate (\ref{asy-psi-3}) and (\ref{ayu}), we have the following expansion
of $T(z)$ near infinity
\begin{eqnarray*}
&& \frac 14\frac 1{z^2} \sum_{j,k=1}^2 a^{jk}(2\frac {\beta_k}{2\pi}-\frac {\beta_j}{2\pi}\frac {\beta_k}{2\pi})+o(\frac
{1}{z^2})+\cdots\\
\end{eqnarray*}
Hence, $T(z)$ is a constant and we have
$$
 \sum_{j,k=1}^2 a^{jk}(2\frac {\beta_k}{2\pi}-\frac {\beta_j}{2\pi}\frac {\beta_k}{2\pi})=0.
$$
This implies the relation between $\beta_1$ and $\beta_2$ as well as the relation between $\alpha_1$ and $\alpha_2$.
\qed

\

It is natural to ask the following question: can an entire solution $(u_1,u_2,\psi_1,\psi_2)$ of (\ref{bequ-1}) defined on $\R^2$ and satisfying (\ref{bequ-2})  extend conformally to a solution on $\S^2 ?$ This is true for the case of the super-Liouville Equation (Theorem 6.4 in \cite{JWZ}). Here we would like to propose the following:

\begin{conj} Any solution $(u_1,u_2,\psi_1,\psi_2)$ to (\ref{bequ-1}) defined on $\R^2$ and satisfying (\ref{bequ-2})  extends conformally to a smooth solution on $\S^2$. Furthermore, $\alpha_1=\alpha_2=4\pi$, and $\beta_1=\beta_2=4\pi$.
\end{conj}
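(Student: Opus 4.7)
The plan is to reduce the conjecture to the sharp quantization $\beta_1 = \beta_2 = 4\pi$. Once this holds, smooth conformal extension to $\S^2$ follows from the asymptotic formulas of Theorem \ref{asyth}, and the identities $\beta_i = 2\alpha_i - \alpha_j$ (for $\{i,j\} = \{1,2\}$) then force $\alpha_1 = \alpha_2 = 4\pi$.

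\textbf{Conformal extension given the sharp values.} Apply the Kelvin transformation $v_i(y) = u_i(y/|y|^2) - 2\ln|y|$, $\phi_i(y) = |y|^{-1}\psi_i(y/|y|^2)$; conformal invariance gives that $(v_1, v_2, \phi_1, \phi_2)$ solves \eqref{sequ-1} on $\R^2 \setminus \{0\}$. From \eqref{ayu}, $v_i(y) = (\beta_i/(2\pi) - 2)\ln|y| + O(|y|)$ near $0$, so $\beta_i = 4\pi$ renders $v_i$ continuous at the origin. From \eqref{asy-psi-4}, $|\phi_i(y)| = O(|y|^{-1/2+\delta_0})$, so $|\phi_i|^4$ is integrable near $0$. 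Standard removable-singularity arguments, analogous to those developed for the super-Liouville system, then yield smoothness of $(v_i, \phi_i)$ across $0$; via stereographic projection this furnishes a smooth solution on $\S^2$.

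\textbf{Sharp quantization: the core difficulty.} The holomorphic quadratic differential $T(z)$ of Proposition \ref{pro-holo} is forced to vanish identically on $\C$ by its $O(|z|^{-2})$ decay at infinity, yielding the single relation \eqref{beta} between $\beta_1$ and $\beta_2$; this carves out a one-parameter curve of candidate pairs $(\beta_1,\beta_2) \in (2\pi, 2\pi(1+\sqrt{3}))^2$ passing through $(4\pi,4\pi)$. To isolate that single point I would combine two ingredients. First, a Pohozaev-type identity: multiply each $u_i$ equation by $\sum_k a^{ik}(x \cdot \nabla u_k)$, integrate over $B_R$, and let $R \to \infty$, handling the spinor contribution via the Clifford identity $\mathrm{Re}\langle \psi_i, e_\alpha \cdot \nabla_{e_\alpha}\psi_i\rangle = \tfrac{1}{2}\partial_{e_\alpha}|\psi_i|^2$ together with $\slashiii{D}\psi_i = -e^{u_i}\psi_i$, and using Theorem \ref{asyth} to control the boundary terms. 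Second, and alternatively, a moving-plane argument on the scalar system coupling $u_i$ with $|\psi_i|^2$, where the spinor is handled through the Weitzenb\"ock/Lichnerowicz inequality for $\slashiii{D}^2\psi_i$. Either path should produce a second algebraic relation independent of \eqref{beta}, or directly establish radial symmetry so that $\beta_1 = \beta_2$; combined with \eqref{beta}, this forces $\beta_1 = \beta_2 = 4\pi$.

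\textbf{Main obstacle.} The hard part is precisely the sharp quantization step. The spinors $\psi_i$ are sections of a bundle that does not transform naturally under reflection, so any moving-plane comparison must be executed entirely in terms of the scalar quantities $(u_i, |\psi_i|^2)$; closing the loop demands controlling the sign of the reflected spinor terms, which seems to require a delicate maximum principle for the coupled $|\psi_i|^2$-equations. The Pohozaev route is in principle cleaner, but one must extract a genuinely new algebraic identity not already implied by $T \equiv 0$, and at quadratic order in $\beta_i$ the natural balance is saturated, so the required new identity would likely arise at cubic order and be much more delicate to establish. This rigidity phenomenon, well understood for the pure Toda system through Jost--Wang and Lin--Wei--Ye, is what remains open in the super setting and justifies the statement being formulated as a conjecture.
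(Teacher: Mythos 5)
This statement is labeled a \emph{conjecture} in the paper, and the authors offer no proof — they explicitly contrast it with the super-Liouville case where the analogous statement is a theorem (Theorem 6.4 in \cite{JWZ}). Your proposal, to its credit, does not actually claim to close the argument either: you correctly reduce the problem to the sharp quantization $\beta_1=\beta_2=4\pi$, verify that this value is on the curve \eqref{beta} and that it forces $\alpha_1=\alpha_2=4\pi$ through the linear relations $\beta_i=2\alpha_i-\alpha_j$, and then candidly state that the quantization step is the open difficulty. So there is no comparison to make with a nonexistent paper proof; instead let me flag two points about your proposal itself.

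First, your diagnosis of the core obstruction is accurate and matches the reason the authors left this open: the vanishing of the holomorphic quadratic differential $T$ supplies only the single quadratic relation \eqref{beta} between $\beta_1$ and $\beta_2$, whereas in the super-Liouville case that same relation (in one variable) already pins down the value uniquely. Your two suggested routes for the missing second constraint — a Pohozaev identity or moving planes — are plausible directions but are not carried out, and your own remarks about why each is delicate (the spinors' failure to transform simply under reflection; the quadratic Pohozaev balance being saturated by $T\equiv 0$) are honest warnings that neither is likely to be straightforward. No gap is hidden here; you simply have not proven the conjecture, as you acknowledge.

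Second, a smaller caution on the part you present as the "easy" half. Even granting $\beta_i=4\pi$, your claim that "standard removable-singularity arguments, analogous to those developed for the super-Liouville system, then yield smoothness" glosses over a difficulty the paper itself raises: the authors note that the singularity-removability argument of \cite{JWZZ1} (which hinges on local $L^1$ integrability of the quadratic differential) does not carry over to the super-Toda setting, and that this is precisely why they developed the energy-gap method of Section 5. The situations are not identical — the paper's concern is bubbling singularities, yours is a single puncture at the origin after Kelvin transform — but you should not assume removability is routine here; it would need its own argument. Taken together, your proposal is a correct and well-motivated reduction with honestly marked gaps, which is an appropriate treatment of a statement the paper itself presents as open.
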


\

\section{Brezis-Merle type concentration compactness}

\

In this section, we shall study the Brezis-Merle type concentration compactness for a sequence of solutions to (\ref{equ-1}) with uniformly bounded energy. We begin with a small energy regularity theorem.

\begin{lm}\label{snrlm} Let $(u_{1n},u_{2n},\psi_{1n},\psi_{2n})$ be a sequence of solutions to (\ref{equ-1})
%\begin{equation}
%\left\{
%\begin{array}{rcl}
%-\Delta u_{1n} &=& \ds\vs 2e^{u_{1n}}-e^{u_{2n}}-e^{\frac {u_{1n}}2}\left\langle \psi_{1n} ,\psi_{1n}
%\right\rangle +\frac 12 e^{\frac {u_{2n}}2}\left\langle \psi_{2n} ,\psi_{2n}
%\right\rangle \qquad\\
%-\Delta u_{2n} &=& \ds\vs 2e^{u_{2n}}-e^{u_{1n}}-e^{\frac {u_{2n}}2}\left\langle \psi_{2n} ,\psi_{2n}
%\right\rangle +\frac 12 e^{\frac {u_{1n}}2}\left\langle \psi_{1n} ,\psi_{1n}
%\right\rangle \qquad\\
%\slashiii{D}\psi_{1n} &=&\ds  -e^{\frac {u_{1n}}2}\psi_{1n}\\
%\slashiii{D}\psi_{2n} &=&\ds  -e^{\frac {u_{2n}}2}\psi_{2n}\\
%\end{array}
%\right.   \label{equ-1n}
%\end{equation}
in $B_r$. Assume that for some fixed constant $C>0$, there holds
$$
\int_{B_r}|\psi_{in}|^4dx<C,
$$
for $i=1,2$.  If there exists a sufficiently small $\varepsilon_0>0$ such that
$$
\int_{B_r}e^{2u_{in}}dx<\varepsilon_0, \qquad i=1,2,
$$
then  $\max\{u^+_{1n},u^+_{2n}\}$ is uniformly bounded in $L^\infty(B_{\frac r4})$.
\end{lm}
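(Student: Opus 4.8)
The plan is to run a standard small-energy regularity argument of the kind familiar from the Liouville and super-Liouville settings, adapted to the coupled $SU(3)$ system, with the extra spinor contributions treated as lower-order perturbations via the sub-supersolution / Moser iteration scheme. By conformal invariance it suffices to prove a uniform $L^\infty$ bound on $\max\{u_{1n}^+, u_{2n}^+\}$ on $B_{r/4}$ under the hypotheses $\int_{B_r} e^{2u_{in}} < \varepsilon_0$ and $\int_{B_r}|\psi_{in}|^4 < C$. The first step is to get control on the spinors: each $\psi_{in}$ solves $\slashiii{D}\psi_{in} = -e^{u_{in}}\psi_{in}$, and since $\int_{B_r} e^{2u_{in}} < \varepsilon_0$ is small while $\int_{B_r}|\psi_{in}|^4 < C$, I expect a small-energy estimate for the Dirac equation (of the type of Lemma \ref{asy-psi}, or the elliptic estimate for $\slashiii{D}$ on $B_r$ combined with Hölder: $\|e^{u_{in}}\psi_{in}\|_{L^{4/3}} \le \|e^{u_{in}}\|_{L^2}^{1/2}\cdots$) to give $\|\psi_{in}\|_{L^\infty(B_{r/2})} \le C$, hence also $\||\psi_{in}|^2\|_{L^\infty(B_{r/2})} \le C$, where here I am sloppy about the precise exponents but the point is only that the spinor bilinear terms $e^{u_{in}}\langle\psi_{in},\psi_{in}\rangle$ become bounded by $C\, e^{u_{in}}$ on a slightly smaller ball.

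With the spinors controlled, the right-hand sides of the first two equations of \eqref{equ-1} on $B_{r/2}$ have the form $-\Delta u_{in} = \sum_j a_{ij} e^{2u_{jn}} + O(e^{u_{1n}} + e^{u_{2n}})$, where the error is now a bounded function times $e^{u_{jn}}$, plus $-K_g$ which is bounded. The next step is the Brezis--Merle splitting: write $u_{in} = u_{in}^{(1)} + u_{in}^{(2)} + u_{in}^{(3)}$ on a ball, where $u_{in}^{(1)}$ solves $-\Delta u_{in}^{(1)} = f_{in}$ with Dirichlet data zero and $f_{in}$ the (bounded-in-$L^1$, by the energy hypothesis, once $\varepsilon_0$ is small enough that the $e^{2u}$ terms are small and the spinor-curvature terms are $L^1$) right-hand side; by Brezis--Merle \eqref{BMInq} one gets $\int \exp(p|u_{in}^{(1)}|) \le C$ for some $p$, in particular $e^{2u_{in}^{(1)}} \in L^{1+\eta}$ uniformly; then $u_{in}^{(2)}$ collects the $L^\infty$ part and $u_{in}^{(3)}$ is harmonic, controlled by its $L^1$ norm, which in turn is bounded by $\int e^{u_{in}} \le C$ from the energy. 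Feeding this back: $e^{2u_{in}} = e^{2u_{in}^{(1)}} e^{2u_{in}^{(2)}+2u_{in}^{(3)}}$ is now in $L^{1+\eta}(B_{r/3})$ uniformly, so $f_{in} \in L^{1+\eta}$, and standard elliptic $L^p$ estimates upgrade $u_{in}^+$ to $L^\infty(B_{r/4})$ with a bound depending only on $C$, $\varepsilon_0$, and the geometry.

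The main obstacle is the coupling: the sign structure of the Cartan matrix means the equation for $u_{1n}$ contains $-e^{2u_{2n}}$ and vice versa, so one cannot simply treat each equation in isolation and must either run the Brezis--Merle splitting simultaneously for the pair $(u_{1n}, u_{2n})$ — exploiting that BOTH $\int e^{2u_{1n}}$ and $\int e^{2u_{2n}}$ are small — or observe that the negative terms only help for the upper bound (they make $-\Delta u_{in}$ smaller where $u_{jn}$ is large) and so can be discarded when estimating $u_{in}^+$ from above. I would adopt the latter viewpoint: for the upper bound on $u_{in}$ one has $-\Delta u_{in} \le 2 e^{2u_{in}} + C e^{u_{in}} + C e^{u_{jn}} + C$, and then the two inequalities can be handled together or one after the other, using the smallness of $\varepsilon_0$ to close the iteration. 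A secondary technical point is making the first step (spinor $L^\infty$ bound) rigorous with the correct function spaces, but this is essentially Lemma \ref{asy-psi} / Lemma 6.2 of \cite{JWZ} applied on $B_r$ rather than near a puncture, so I would cite it rather than redo it.
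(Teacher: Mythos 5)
There is a genuine gap in your first step, and it concerns a point that the paper's proof is specifically designed to avoid. You propose to begin by establishing a uniform pointwise bound $\|\psi_{in}\|_{L^\infty(B_{r/2})}\le C$ for the spinors, using only the hypotheses $\int_{B_r}e^{2u_{in}}<\varepsilon_0$ and $\int_{B_r}|\psi_{in}|^4<C$. That bound does not follow from what you have. The Dirac equation $\slashiii{D}\psi_{in}=-e^{u_{in}}\psi_{in}$ with $e^{u_{in}}\in L^2$ small and $\psi_{in}\in L^4$ gives $e^{u_{in}}\psi_{in}\in L^{4/3}$ (your H\"older factorization is $\|e^u\psi\|_{L^{4/3}}\le\|e^u\|_{L^2}\|\psi\|_{L^4}$, not $\|e^u\|_{L^2}^{1/2}$), hence $\psi_{in}\in W^{1,4/3}_{loc}$, which by Sobolev embedding in dimension two lands back in $L^4$ and nothing better. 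Using the smallness of $\|e^{u_{in}}\|_{L^2}$ to bootstrap (as in Lemma \ref{spinor-epsilon}) upgrades this to $\psi_{in}\in L^q$ for every \emph{finite} $q$, but never to $L^\infty$: that would require $e^{u_{in}}\in L^p$ for some $p>2$, which is exactly the improved integrability you are still trying to prove. Citing Lemma \ref{asy-psi} does not rescue the step either, since the estimate there has the form $|\psi(x)|\lesssim |x|^{-1/2}$, which degenerates at the center of the ball and is useless for a uniform interior $L^\infty$ bound.

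The good news is that the $L^\infty$ bound on $\psi_{in}$ is not needed at all, and the rest of your scheme is sound once you remove it. For the Brezis--Merle step you only need the source to be small in $L^1$, and for the coupling terms this follows directly from Cauchy--Schwarz without any pointwise control: $\int_{B_r}e^{u_{in}}|\psi_{in}|^2\le(\int e^{2u_{in}})^{1/2}(\int|\psi_{in}|^4)^{1/2}\le\sqrt{\varepsilon_0 C}$. After the Brezis--Merle argument upgrades $e^{2u_{in}}$ to $L^p$ for some $p>1$ on a smaller ball, H\"older with the fixed $L^4$ bound on $\psi_{in}$ gives $e^{u_{in}}|\psi_{in}|^2\in L^{l}$ with $l=\frac{2p}{p+1}>1$, which is enough for the elliptic $W^{2,q}$ estimate. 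This is exactly what the paper does. A second, minor difference from the paper: rather than solving a Poisson problem with the \emph{full} $f_{in}$ as source, the paper places only the positive part $2e^{2u_{1n}}+\frac12 e^{u_{2n}}|\psi_{2n}|^2$ into the Dirichlet problem for $w_{1n}$ (so $w_{1n}\ge 0$) and uses that $u_{1n}-w_{1n}$ is then \emph{subharmonic} with $\int (u_{1n}-w_{1n})^+\le\int u_{1n}^+\le\int e^{u_{1n}}$, so the mean-value inequality bounds $u_{1n}-w_{1n}$ from above without any reference to the size of $w_{1n}$ itself. Your version (full $f_{in}$ in the Poisson problem, with the remainder harmonic and controlled through an $L^1$ bound) can be made to work, but the sign-split formulation is cleaner and closer in spirit to the observation you make at the end of your proposal about the negative off-diagonal terms only helping the upper bound.
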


\begin{proof} To prove this lemma, let us define
 \begin{equation}\label{f1n}
 f_{1n}=2e^{2u_{1n}}-e^{2u_{2n}}-e^{u_{1n}}\left\langle \psi_{1n} ,\psi_{1n}
\right\rangle +\frac 12 e^{u_{2n}}\left\langle \psi_{2n} ,\psi_{2n}
\right\rangle,
\end{equation}
and
 \begin{equation}\label{f2n}
f_{2n}=2e^{2u_{2n}}-e^{2u_{1n}}-e^{u_{2n}}\left\langle \psi_{2n} ,\psi_{2n}
\right\rangle +\frac 12 e^{u_{1n}}\left\langle \psi_{1n} ,\psi_{1n}
\right\rangle.
 \end{equation}
By standard elliptic theory, it is sufficient to show that $f_{in}$ is uniformly bounded in $L^q_{loc}(B_{r})$ for some $q>1$.

Let $w_{1n}$ be the solution of the following problem
\begin{equation*}
\left\{
\begin{array}{rcll}
-\triangle w_{1n} &=& 2e^{2u_{1n}}+\frac 12 e^{u_{2n}}\left\langle \psi_{2n} ,\psi_{2n} \right\rangle &\qquad \text{in} \quad B_{r}\\
w_{1n}&=& 0, &\qquad\text{on}\quad \partial B_{r}.
\end{array}
\right. \end{equation*} It is clear that $w_{1n}\geq 0$ in $B_r$. Furthermore, we can choose $\varepsilon_0>0$ sufficiently small such that
$$
\int_{B_r}(2e^{2u_{1n}}+\frac 12 e^{u_{2n}}\left\langle \psi_{2n} ,\psi_{2n} \right\rangle)dx < 2\varepsilon_0+\frac 12\sqrt{C\varepsilon_0}<2\pi.
$$
Then it follows from (\ref{BMInq}) that there exists some $p>1$ such that
$$
e^{2w_{1n}} \text { is uniformly bounded in } L^p(B_r).
$$ Now it is clear that $\triangle (u_{1n}-w_{1n})=e^{2u_{2n}}+ e^{u_{1n}}\left\langle \psi_{1n} ,\psi_{1n} \right\rangle\geq 0 $ in $B_r$ and
$$
\int_{B_r}(u_{1n}-w_{1n})^+dx\leq \int_{B_r}u^+_{1n}dx\leq \int_{B_r}e^{u_{1n}}dx<C
$$ for some constant $C>0$. Therefore, by the mean value theorem for subharmonic functions, for any $y\in B_{\frac 12}$, we have
$$
(u_{1n}-w_{1n})(y)\leq C \int_{B_r}(u_{1n}-w_{1n})^+dx\leq C.
$$
Thus, we deduce that
$$
e^{2u_{1n}} \text { is uniformly bounded in } L^p(B_{\frac r2})
$$ for some $p>1$.
Applying a similar argument, we obtain that
$$
e^{2u_{2n}} \text { is uniformly bounded in } L^p(B_{\frac r2}),
$$ for some $p>1$. By H\"{o}lder's inequality, for $l=\frac {2p}{p+1}>1$, we have
$$
\int_{B_{\frac r2}}(e^{u_{in}}|\psi_{in}|^2)^ldx\leq (\int_{B_{\frac r2}}(e^{2pu_{in}}dx)^{\frac l{2p}}(\int_{B_{\frac r2}}|\psi_{in}|^4dx)^{\frac {2p-l}{2p}}\leq C.
$$
Let $q=\min\{l,p\}>1$,  then $f_{in}$ is uniformly bounded in $L^q(B_\frac r2)$ with $q>1$ and the conclusion of the Lemma follows.
\end{proof}

By Lemma \ref{snrlm}, we can show the Brezis-Merle type concentration compactness theorem.

\

\noindent{\bf Proof of Theorem \ref{BMTH}:}
Firstly, for $i=1,2$, by the equation
$\slashiii{D}\psi _{in}=-e^{u_{in}}\psi _{in}$,  we know that if $u_{in}$ is uniformly bounded from above then $|\psi_{in}|$ is uniformly bounded. Therefore we have  $\Sigma_{\psi_{in}}\subset\Sigma_{u_{in}}$ and $\psi _{in}$ is bounded in $
L_{loc}^\infty (M\backslash \Sigma _{\psi_{in}})$.

Next, by the energy condition, we know $e^{2u_{in}}$ is uniformly bounded in $
L^1(M)$, and so, we may extract a subsequence from $u_{in}$ (still denoted by
$u_{in}$) such that $e^{2u_{in}}$ converges in the sense of measures
on $M$ to some nonnegative bounded measure $\mu_i$, i.e.,
\begin{equation*}
\int_{M}e^{2u_{in}}\varphi dv \rightarrow \int_{M}\varphi d\mu_i
\end{equation*}
for every $\varphi \in C(M)$.

For $\varepsilon>0$, let us define an $\varepsilon$-regular point with respect to $\mu_i$. A point $x\in M$ is called an $\varepsilon-$regular point with respect to $\mu_i$ if there is a
function $\varphi \in C(M),$ satisfying $\text{supp}\varphi \subset
B_r(x)\subset M,$ $0\leq \varphi \leq 1$ and $\varphi =1$ in a
neighborhood of $x$ such that
\begin{equation*}
\int_{M}\varphi d\mu_i  <\varepsilon.
\end{equation*}

Let $\varepsilon_0>0$ be the constant as in Lemma \ref{snrlm}, we set
\begin{equation*}
M _i(\varepsilon_0) = \{x\in M: x\text{ is not an }\varepsilon_0 -\text{
regular point with respect to }\mu_i\}. \\
\end{equation*}
By the energy condition, it is clear that $M _i(\varepsilon_0 )$ is
finite for $i=1,2$.

\

On the one hand, if $x_0\notin \Sigma_{u_{in}}$ for some $i\in \{1,2\}$, then there exists $R_0>0$ such that
$$
\max_{\bar{B}_{R_0}(x_0)}u_{in}\leq C,  \qquad \text {for any } n\in \N.
$$
Therefore we have
$$
\int_{B_R(x_0)}e^{2u_{in}}dx\leq CR^{2}$$ for all $0<R<R_0$.  This implies that
$x_0$ is a regular point and $x_0\notin M_i(\varepsilon_0)$. Thus we have $M_i(\varepsilon_0)\subset \Sigma_{u_{in}}$.

On the other hand, if $x_0\notin M_1(\varepsilon_0)\cup M_2(\varepsilon_0)$, then there exists $r_0>0$ such that for $ i=1,2$
$$
\int_{B_{r_0}(x_0)}d\mu_i< \varepsilon_0.
$$ Hence, by Lemma \ref{snrlm} we have that $u^+_{in}$ is  bouned in $L^\infty(B_{\frac {r_0}2}(x_0))$ for $ i=1,2$. It follows that $x_0\notin \Sigma_{u_{1n}}\cup\Sigma_{u_{2n}}$. Thus we have $\Sigma_{u_{1n}}\cup\Sigma_{u_{2n}}\subset M_1(\varepsilon_0)\cup M_2(\varepsilon_0)$.

Therefore, we obtain that $$
\Sigma _{u_{1n}}\cup \Sigma _{u_{2n}}= M _1(\varepsilon _0)\cup M _2(\varepsilon _0),
$$ and  $\Sigma _{u_{in}}$ is a finite set for all $i=1,2$.

\

Now we can show the convergence of $u_{in}$. The crucial tool here is a Harnack type lemma as in \cite{BM, Lu}.

 If  $\Sigma _{u_{1n}}\cup \Sigma _{u_{2n}}=\emptyset $, then  $u_{in}^{+}$ is bounded in $L_{loc}^\infty(M)$ for $ i=1,2$.  Consequently, by the spinor equations, $\psi _{in}$ is bounded in $L_{loc}^\infty (M)$ for $ i=1,2$.  Let $f_{in}$ be the functions defined as in the proof of Lemma \ref{snrlm}, see \eqref{f1n} and \eqref{f2n}. Then $f_{in}$ is bounded in $L^p(M)$ for any $p>1$ and for $ i=1,2$.  Applying the Harnack inequality, we can get i) and ii) of b) .

If $\Sigma _{u_{1n}}\cup \Sigma _{u_{2n}}\neq \emptyset $, then $u_{in}^{+}$ is bounded in $L_{loc}^\infty
(M\backslash (\Sigma _{u_{1n}}\cup \Sigma _{u_{2n}})$ for $i=1,2$. By using the Harnack inequality, we can get iii) of b).
\qed

\

\section{Bubbling solutions and energy identity for spinors}

\

In this section, we will show the energy identities for the spinor parts of a sequence of blow-up solutions of the super-Toda system \eqref{equ-1} with uniformly bounded energy. To show this, we need some lemmas about the spinor equations.

Firstly, we state an $\epsilon$-regularity theorem for the Dirac type equation. Result of this type were obtained in \cite{W} for a different but similar equation and further developed in \cite{SZ, JKTWZ} in a more general setting.
\begin{lm}\label{spinor-epsilon}
For any $2\leq q<\infty$,  there exist $\epsilon = \epsilon(q)>0$ and $C=C(q)>0$ such that if a spinor $\psi\in L^4(B_1)$ is a weak solution of
\begin{equation*}
\slashiii{D} \psi = - e^{u}  \psi\quad \text{in $B_1$.}
\end{equation*}
with
$$\int_{B_1} e^{2u}\leq \epsilon, $$
then
\begin{equation*}
\|\psi\|_{L^q(B_{\frac12})} + \|\nabla \psi\|_{L^{\frac{2q}{2+q}}(B_{\frac12})} \leq C\|\psi\|_{L^{4}(B_1)}.
\end{equation*}
\end{lm}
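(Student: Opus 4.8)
The statement is a small-energy regularity estimate for the Dirac-type equation $\slashiii{D}\psi = -e^u\psi$, and the natural strategy is a bootstrap from $L^4$ regularity to $L^q$ regularity, controlling the right-hand side via H\"older's inequality using smallness of $\int_{B_1}e^{2u}$. The plan is to proceed as follows. First I would recall the basic elliptic estimate for the Dirac operator on the disk: for any $1<p<\infty$ there is a constant depending on $p$ and the domain such that a weak solution $\varphi$ of $\slashiii{D}\varphi = F$ in $B_r$ satisfies $\|\varphi\|_{W^{1,p}(B_{r'})}\le C(\|F\|_{L^p(B_r)}+\|\varphi\|_{L^p(B_r)})$ for $r'<r$; this follows from the fact that $\slashiii{D}^2$ is, up to lower-order terms, the Laplacian, together with Calder\'on--Zygmund theory, or directly from the first-order elliptic estimates for $\slashiii{D}$. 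Combined with the Sobolev embedding $W^{1,p}\hookrightarrow L^{p^*}$ in dimension $2$ (with $p^*=\frac{2p}{2-p}$ for $p<2$, and $W^{1,p}\hookrightarrow L^\infty$ once $p>2$), this gives the engine for the iteration.

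Second, the key analytic input is the estimate of the nonlinearity. If $\psi\in L^{q_0}(B_{r})$ for some $q_0\ge 4$, then for any $p$ with $\frac1p = \frac1{q_0}+\frac1s$ we have, by H\"older,
\[
\|e^u\psi\|_{L^p(B_r)} \le \|e^u\|_{L^s(B_r)}\,\|\psi\|_{L^{q_0}(B_r)}.
\]
The point is that $\|e^u\|_{L^2(B_1)} = (\int_{B_1}e^{2u})^{1/2}\le \sqrt\epsilon$ is small, so choosing $s=2$ and using interpolation for intermediate exponents, the small factor $\sqrt\epsilon$ appears in front of each application. Starting from $\psi\in L^4(B_1)$, one step of the Dirac estimate plus Sobolev embedding on slightly shrunk balls yields $\psi\in L^{q_1}(B_{r_1})$ with $q_1$ strictly larger than $4$ (a definite gain governed by the Sobolev exponent), with the bound $\|\psi\|_{L^{q_1}(B_{r_1})}\le C\sqrt\epsilon\,\|\psi\|_{L^4(B_1)} + C\|\psi\|_{L^4(B_1)}$. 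Iterating finitely many times on a decreasing sequence of radii $1=r_0>r_1>\cdots>r_m=\frac12$ reaches any prescribed $q<\infty$ after a number of steps depending only on $q$; absorbing the constants across these finitely many steps produces the asserted $C=C(q)$ and $\epsilon=\epsilon(q)$. Finally, feeding the $L^q$ bound on $\psi$ back once more into the Dirac estimate gives the gradient bound $\|\nabla\psi\|_{L^{2q/(2+q)}(B_{1/2})}\le C\|\psi\|_{L^4(B_1)}$, since $\frac{2q}{2+q}$ is exactly the exponent $p$ with $\frac1p=\frac12+\frac1q$, matching $\|e^u\psi\|_{L^p}\le\|e^u\|_{L^2}\|\psi\|_{L^q}$.

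The main obstacle, and the place requiring care, is the borderline nature of the first step: $\psi\in L^4$ and $e^u\in L^2$ give $e^u\psi\in L^{4/3}$, whose Sobolev conjugate is $L^4$ again — i.e.\ a single crude application does not gain integrability. The gain must be extracted either by (i) using that $\|e^u\|_{L^2}$ is genuinely small so that the $L^4\to L^4$ map is a contraction-type estimate on a fixed ball, which already suffices to bound $\psi$ in $L^4$ with constant $C\|\psi\|_{L^4(B_1)}$ and thence, once we know $e^u\in L^{2+\delta}$ locally by higher integrability of $e^{2u}$... — but the hypothesis only gives $e^{2u}\in L^1$, so this route is not automatic. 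The cleaner fix, which I would adopt, is to note that it suffices to prove the estimate for $q$ slightly above $4$ and then re-feed: from $\slashiii{D}\psi=-e^u\psi$ with $e^u\psi\in L^{4/3}$, elliptic estimates give $\psi\in W^{1,4/3}_{loc}$, and $W^{1,4/3}(B^2)\hookrightarrow L^4$ is the borderline case — but in fact $W^{1,4/3}\hookrightarrow L^{q}$ for all $q<\infty$ is \emph{false}; the correct statement is $W^{1,4/3}\hookrightarrow L^4$. So the genuinely correct argument replaces the single H\"older split by the observation that for $p$ slightly below $4/3$ one has $e^u\psi\in L^p$ with a small coefficient, hence $\psi\in W^{1,p}\hookrightarrow L^{p^*}$ with $p^*>4$, and only then does the honest iteration start. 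I would therefore organize the proof as: Step 1, the sub-$4/3$ refinement to break the borderline and land in some $L^{q_1}$, $q_1>4$; Step 2, the finite iteration to reach arbitrary $q<\infty$; Step 3, one final elliptic estimate for the $\nabla\psi$ bound. Each step only uses the standard Dirac elliptic estimate, Sobolev embedding, H\"older, and smallness of $\int_{B_1}e^{2u}$ — exactly the ingredients already available — so no new ideas beyond careful exponent bookkeeping are needed.
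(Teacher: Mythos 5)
Your proposal correctly identifies the crux --- the borderline loop from $L^4$ through $L^{4/3}$ (multiplication by $e^u$) to $W^{1,4/3}$ (Dirac elliptic estimate) and back to $L^4$ (Sobolev embedding) that a naive bootstrap cannot escape --- but the fix you adopt does not work. You claim that choosing $p$ slightly \emph{below} $\tfrac43$ yields $e^u\psi \in L^p$ and hence $\psi \in W^{1,p} \hookrightarrow L^{p^*}$ with $p^* > 4$. This has the Sobolev exponent going the wrong way: in dimension two, $p^* = \frac{2p}{2-p}$ is \emph{increasing} in $p$ on $(1,2)$, so $p < \tfrac43$ gives $p^* < 4$. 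Lowering the source exponent lowers the target exponent too, landing you in a strictly \emph{worse} space than $L^4$. Since $e^u \in L^2$ and $\psi \in L^4$ are all you have, H\"older cannot place $e^u\psi$ in any $L^p$ with $p > \tfrac43$, so no single application of the Dirac estimate plus Sobolev embedding gains integrability; Steps~1 and~2 of your scheme never start. Step~3 (deriving the $W^{1,\frac{2q}{2+q}}$ bound from an $L^q$ bound) is sound once the $L^q(B_{3/4})$ estimate is in hand.

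The paper itself does not give a bootstrap; it invokes Theorem~3.4 of \cite{SZ} (see also Lemma~5.1 of \cite{JKTWZ} and \cite{W}), whose mechanism is genuinely different: a decomposition into a Dirac-harmonic part and a Riesz-potential part, with the small $L^2$ norm of the potential $A = -e^u$ used for \emph{absorption} rather than for Sobolev gain. Concretely, on a ball $B_r$ one writes $\psi = \xi + \eta$, where $\eta$ is the convolution of $e^u\psi\,\chi_{B_r}$ with the fundamental solution of $\slashiii{D}$ on $\R^2$ (kernel of order $|x|^{-1}$) and $\xi := \psi - \eta$ is $\slashiii{D}$-harmonic in $B_r$. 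The Hardy--Littlewood--Sobolev inequality gives, for any $q\ge 4$,
\[
\|\eta\|_{L^q(\R^2)} \;\le\; C_q\,\|e^u\psi\|_{L^{2q/(q+2)}(B_r)} \;\le\; C_q\,\|e^u\|_{L^2(B_r)}\,\|\psi\|_{L^q(B_r)} \;\le\; C_q\sqrt{\epsilon}\,\|\psi\|_{L^q(B_r)},
\]
while $\xi$, being Dirac-harmonic, satisfies interior $L^q$ estimates in terms of its $L^4$ norm. Choosing $\epsilon$ small enough that $C_q\sqrt\epsilon \le \tfrac12$ absorbs the $\eta$ contribution; making this rigorous (one does not know a~priori that $\psi\in L^q$) takes an approximation argument or a decay iteration over shrinking balls, as carried out in the cited references. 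This decomposition-and-absorption structure --- not exponent bookkeeping through Sobolev embeddings --- is the ingredient your argument is missing, so your closing remark that ``no new ideas beyond careful exponent bookkeeping are needed'' is precisely where the proposal goes wrong.
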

\begin{proof}
$\psi \in L^4(B_1)$ is a weak solution of an equation of the form
\begin{equation*}
\slashiii{D} \psi = A \psi\quad \text{in $B_1$.}
\end{equation*}
where the potential $A=- e^{u} $ has small $L^2$ norm.  Then we can apply the same argument as in the proof of Theoem 3.4 in \cite{SZ} to get the conclusions. See also Lemma 5.1 in \cite{JKTWZ} for a more general equation.
\end{proof}

Consequently, we have the following energy gap result for  the Dirac type equation on $\R^2\setminus{\{0\}}$:
\begin{lm}\label{spinor-energygap}
There exists a universal constant $\epsilon_1 >0$ such that if $(u,\psi)$ solves weakly
\begin{equation} \label{dirac equation}
\slashiii{D} \psi = - e^{u}  \psi\quad \text{in $\R^2\setminus{\{0\}}$.}
\end{equation}
and satisfies
$$\int_{\R^2} e^{2u}\leq \epsilon_1, \quad  \int_{\R^2} |\psi|^4 < \infty, $$
then $\psi \equiv 0 $ on $\R^2$.
\end{lm}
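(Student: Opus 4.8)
The plan is to combine the $\epsilon$-regularity estimate of Lemma~\ref{spinor-epsilon} with a scaling/exhaustion argument to control $\psi$ on all of $\R^2$, and then to use a Pohozaev-type (or direct integration) identity to force $\psi\equiv 0$. First I would fix $q=4$ in Lemma~\ref{spinor-epsilon} and let $\epsilon_1\le\epsilon(4)$ be the resulting threshold (possibly shrunk further below). The key scaling observation is that the equation $\slashiii{D}\psi=-e^u\psi$ together with the quantities $\int e^{2u}$ and $\int|\psi|^4$ are conformally invariant in the sense of the Kelvin transform used in Section~3: under $x\mapsto x/|x|^2$ with $v(x)=u(x/|x|^2)-2\ln|x|$ and $\phi(x)=|x|^{-1}\psi(x/|x|^2)$, the transformed pair again solves a Dirac-type equation on $\R^2\setminus\{0\}$ and $\int_{\R^2}e^{2v}=\int_{\R^2}e^{2u}$, $\int_{\R^2}|\phi|^4=\int_{\R^2}|\psi|^4$. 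Thus the smallness hypothesis $\int_{\R^2}e^{2u}\le\epsilon_1$ is preserved, and it suffices to bound $\psi$ near $0$ and near $\infty$, which by the Kelvin transform are the same problem.

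The main step is a uniform pointwise bound. For any ball $B_{2r}(x_0)\subset\R^2\setminus\{0\}$ we have $\int_{B_{2r}(x_0)}e^{2u}\le\int_{\R^2}e^{2u}\le\epsilon_1\le\epsilon(4)$, so Lemma~\ref{spinor-epsilon} applied on this ball (after rescaling to unit size, which only improves the $e^{2u}$ integral) gives
\begin{equation*}
\|\psi\|_{L^4(B_r(x_0))}\le C\|\psi\|_{L^4(B_{2r}(x_0))},
\end{equation*}
and more usefully, iterating the $L^q$ estimate in $q$ and then invoking the $\nabla\psi$ estimate plus Sobolev embedding, one upgrades to $\|\psi\|_{L^\infty(B_r(x_0))}\le C r^{-1/2}\big(\int_{B_{2r}(x_0)}|\psi|^4\big)^{1/4}$ — exactly the form of the bound already recorded in Lemma~\ref{asy-psi}, whose proof (Lemma~6.2 in \cite{JWZ}) applies verbatim here since the right-hand side $-e^u\psi$ has the same structure. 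Since $\int_{\R^2}|\psi|^4<\infty$, letting $r\to\infty$ (equivalently, by the Kelvin transform, examining the behavior at $0$) shows $|\psi(x)|\to 0$ as $|x|\to\infty$, and in fact $|\psi(x)|\le C|x|^{-1/2}(\int_{|y|\ge |x|}|\psi|^4)^{1/4}=o(|x|^{-1/2})$, with an analogous decay $|\psi(x)|=o(|x|^{-1/2})$ as $|x|\to 0$ obtained from the Kelvin-transformed estimate.

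Finally I would show these decay rates are incompatible with a nonzero solution. The cleanest route is to test the equation against $\psi$ itself: from $\slashiii{D}\psi=-e^u\psi$ one has $\int_{A(\rho,R)}\langle\slashiii{D}\psi,\psi\rangle=-\int_{A(\rho,R)}e^u|\psi|^2$ on an annulus $A(\rho,R)=\{\rho\le|x|\le R\}$, and integrating by parts the left-hand side produces a boundary term on $\partial B_R$ and $\partial B_\rho$ of size controlled by $R\cdot\|\psi\|_{L^\infty(\partial B_R)}^2=o(1)$ and $\rho\cdot\|\psi\|_{L^\infty(\partial B_\rho)}^2=o(1)$ by the decay just established; since $\slashiii{D}$ is formally self-adjoint the bulk term vanishes in the real part, leaving $\int_{\R^2}e^u|\psi|^2=0$, hence $\psi\equiv 0$ wherever $e^u>0$, i.e.\ everywhere, and the singularity at $0$ is then removable trivially. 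The main obstacle is making the boundary-term analysis rigorous: one must justify the integration by parts across the puncture (the solution is only in $L^4_{loc}$ near $0$ a priori) and ensure the weak formulation allows testing against $\psi$; this is handled by first using the $\epsilon$-regularity to promote $\psi$ to a smooth (indeed $C^\infty_{loc}(\R^2\setminus\{0\})$) solution with the stated decay, and by choosing sequences $\rho_j\to 0$, $R_j\to\infty$ along which the boundary integrands are genuinely small — a standard device, but the one place where the argument needs care rather than routine estimation.
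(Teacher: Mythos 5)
Your route diverges from the paper's, and unfortunately the final step does not close. You propose to test $\slashiii{D}\psi=-e^u\psi$ against $\psi$, integrate by parts over $A(\rho,R)$, and conclude $\int_{\R^2}e^u|\psi|^2=0$ because ``$\slashiii{D}$ is formally self-adjoint, the bulk term vanishes in the real part, and the boundary terms are $o(1)$.'' But $\langle\slashiii{D}\psi,\psi\rangle=-e^u|\psi|^2$ is a real pointwise identity. Integration by parts for the Dirac operator reads $\int_\Omega\langle\slashiii{D}\psi,\psi\rangle-\langle\psi,\slashiii{D}\psi\rangle=\int_{\partial\Omega}\langle\nu\cdot\psi,\psi\rangle$, and the left-hand side is $2i\,\mathrm{Im}\int_\Omega\langle\slashiii{D}\psi,\psi\rangle$, which is identically zero here. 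So the boundary identity only constrains the already-vanishing skew part; it says nothing about $\mathrm{Re}\int_\Omega\langle\slashiii{D}\psi,\psi\rangle=-\int_\Omega e^u|\psi|^2$, which is not a divergence of anything. One can see the argument cannot work even in the limiting case $e^u\equiv 0$: a harmonic spinor in $L^4(\R^2\setminus\{0\})$ would give a vacuous identity $0=0$, whereas the conclusion $\psi\equiv 0$ in that case is the genuinely nontrivial fact about harmonic spinors on $\S^2$. Your decay estimate $|\psi(x)|=o(|x|^{-1/2})$ is also too weak to force a conformal extension across either puncture, so it cannot be used to reduce to the $\S^2$ case either.

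A second, smaller issue is that you fix $q=4$ in Lemma~\ref{spinor-epsilon}. At $q=4$ the $L^q$ norm of $\psi$ is scale-invariant, so the resulting estimate $\|\psi\|_{L^4(B_{r/2}(x_0))}\le C\|\psi\|_{L^4(B_{2r}(x_0))}$ carries no gain under rescaling. The paper's proof is precisely to choose $q>4$: after rescaling $\widetilde\psi(x)=r^{1/2}\psi(rx)$, Lemma~\ref{spinor-epsilon} gives $\|\widetilde\psi\|_{L^q(B_{1/2})}\le C(q)\|\widetilde\psi\|_{L^4(B_1)}$, which unscales to $\|\psi\|_{L^q(B_{r/2})}\le C(q)\,r^{(4-q)/(2q)}\|\psi\|_{L^4(\R^2)}$ with a strictly negative exponent $(4-q)/(2q)$. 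Letting $r\to\infty$ forces $\psi\equiv 0$ directly, with no need for pointwise asymptotics, for a Pohozaev identity, or for removability at the puncture beyond the preliminary observation (made in the paper) that $\psi\in L^4$ with $e^{2u}\in L^1$ already solves the equation weakly across the origin. You should replace the entire second half of your argument with this scaling step; the $\epsilon$-regularity lemma with a supercritical exponent $q>4$ is both necessary and sufficient.
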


\begin{proof} Let $\psi \in L^4(\R^2)$ solve \eqref{dirac equation} weakly and with $u$ satisfying $\int_{\R^2} e^{2u} < \infty.$ Then it is easy to verify that $\psi \in L^4(\R^2)$ is a weak solution on the whole $\R^2$.

By conformal invariance, for any $r>0$, define
\begin{equation*}
\left\{
\begin{array}{rcl}
\widetilde{u}(x)&=&u(r x)+ \ln r\\
\widetilde{\psi}(x)&=& r^{\frac 12} \psi (r x)\\
\end{array}
\right.
\end{equation*}
then $(\widetilde{u}, \widetilde{\psi})$ satisfies
\begin{equation*}
\slashiii{D} \widetilde{\psi} = - e^{\widetilde{u}}  \widetilde{\psi}\quad \text{in $\R^2$.}
\end{equation*}
and  the following holds:
$$ \int_{B_1} e^{2\widetilde{u}} = \int_{B_r} e^{2u}\leq\int_{\R^2} e^{2u}\leq \epsilon_1.$$

Applying Lemma \ref{spinor-epsilon} with some fixed $q>4$, if we take  $\epsilon_1=\frac{\epsilon(q)}{2}$ with $\epsilon(q)>0$ as in Lemma \ref{spinor-epsilon},  then
\begin{equation*}
 \|\widetilde{\psi}\|_{L^q(B_{\frac{1}{2}})}\leq   C(q) \|\widetilde{\psi}\|_{L^4(B_1)}.
\end{equation*}
It follows that
\begin{equation*}
 \|\psi\|_{L^q(B_{\frac{r}{2}})}\leq   C(q)     r^{\frac{4-q}{2q}}       \|\psi\|_{L^4(B_r)}  \leq   C(q)     r^{\frac{4-q}{2q}}       \|\psi\|_{L^4(\R^4)}.
\end{equation*}
Letting $r\rightarrow \infty$, we get $\psi \equiv 0 $ on $\R^2$.
\end{proof}

The next lemma is about some elliptic estimates for  spinors on an annulus, see Lemma 3.1 in \cite{JWZZ1}. It is easy to see that the Dirac type equation itself is sufficient to get these estimates.
\begin{lm}\label{main-lamm}
There exist two universal constants $\Lambda>0$ and $C>0$ such that if $(u,\psi)$ solves
\begin{equation*}
\slashiii{D} \psi = - e^{u}  \psi\quad \text{in $B_1$.}
\end{equation*}
and $(u,\psi)$ has finite energy on the annulus
$A_{r_1,r_2}=\{x\in \R^2|r_1\leq |x|\leq r_2\}$, where
$0<r_1<2r_1<\frac {r_2}2<r_2<1$, then we have
\begin{eqnarray}\label{inqu}
&&(\int_{A_{2r_1,\frac {r_2}2}}|\nabla \psi|^{\frac 43})^{\frac
34}+(\int_{A_{2r_1,\frac {r_2}2}}|\psi|^4)^{\frac 14}\\
&\leq & \Lambda (\int_{A_{r_1,r_2}}e^{2u})^{\frac
12}(\int_{A_{r_1,r_2}}|\psi|^4)^{\frac
14}+C(\int_{A_{r_1,2r_1}}|\psi|^4)^{\frac 14}+C(\int_{A_{\frac
{r_2}2, r_2}}|\psi|^4)^{\frac 14}\nonumber
\end{eqnarray}
\end{lm}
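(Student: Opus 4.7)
The plan is to combine a cut-off argument with the standard $L^{4/3}$ elliptic estimate for the Euclidean Dirac operator. First, I would fix a radial cut-off $\eta\in C^\infty_c(A_{r_1,r_2})$ equal to $1$ on $A_{2r_1,r_2/2}$, with $|\nabla\eta|\le C/r_1$ on the inner transition annulus $A_{r_1,2r_1}$ and $|\nabla\eta|\le C/r_2$ on the outer transition annulus $A_{r_2/2,r_2}$. Crucially, by the scale invariance of the Dirichlet integral in dimension two, such a cut-off satisfies $\int_{A_{r_1,2r_1}}|\nabla\eta|^2+\int_{A_{r_2/2,r_2}}|\nabla\eta|^2\le C$ with $C$ absolute. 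Setting $\varphi=\eta\psi$, the Leibniz rule for the Dirac operator combined with the equation gives
\[
\slashiii{D}\varphi=\nabla\eta\cdot\psi+\eta\,\slashiii{D}\psi=\nabla\eta\cdot\psi-\eta\,e^u\psi.
\]

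Next, since $\varphi$ is compactly supported in $\R^2$, I would apply the global Calder\'on--Zygmund bound $\|\nabla\varphi\|_{L^{4/3}(\R^2)}\le\Lambda\|\slashiii{D}\varphi\|_{L^{4/3}(\R^2)}$, valid because $\slashiii{D}$ is essentially the flat Dirac operator whose inverse is a matrix-valued Riesz transform, hence bounded on $L^p$ for $1<p<\infty$. H\"older with exponents $(2,4)$ (matching $\tfrac12+\tfrac14=\tfrac34$) handles the potential term
\[
\|\eta e^u\psi\|_{L^{4/3}}\le\|e^u\|_{L^2(A_{r_1,r_2})}\|\psi\|_{L^4(A_{r_1,r_2})},
\]
and the same H\"older inequality combined with the scale-invariant cut-off bound yields
\[
\|\nabla\eta\cdot\psi\|_{L^{4/3}}\le C\|\psi\|_{L^4(A_{r_1,2r_1})}+C\|\psi\|_{L^4(A_{r_2/2,r_2})}.
\]
Restricting the $L^{4/3}$ norm of $\nabla\varphi$ to $A_{2r_1,r_2/2}$, where $\eta\equiv 1$ and hence $\nabla\varphi=\nabla\psi$, reproduces the first summand on the left of \eqref{inqu}.

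For the $L^4$ piece on the left-hand side, I would invoke the Sobolev embedding $W^{1,4/3}(\R^2)\hookrightarrow L^4(\R^2)$ (since in dimension two the Sobolev conjugate of $4/3$ is $4$) applied to the compactly supported spinor $\varphi$, giving $\|\varphi\|_{L^4(\R^2)}\le C\|\nabla\varphi\|_{L^{4/3}(\R^2)}$. Combining this with the $L^{4/3}$ estimate from the previous step and using once more that $\varphi=\psi$ on $A_{2r_1,r_2/2}$ completes \eqref{inqu}.

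The only real obstacle is keeping the constants $\Lambda$ and $C$ genuinely universal, i.e. independent of $r_1$ and $r_2$. Both ingredients cooperate: the Calder\'on--Zygmund constant for $\slashiii{D}$ on $\R^2$ at the exponent $p=4/3$ is scale invariant, and the Dirichlet energy $\int|\nabla\eta|^2$ of a radial annular transition is scale invariant in dimension two. This is precisely the reason the lemma is stated at the paired exponents $(4/3,4)$; any other choice would produce a constant depending on the scale of the annulus and destroy the argument.
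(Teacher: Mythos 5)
Your argument is correct and is the standard, indeed essentially forced, way to prove this annulus estimate; the paper itself gives no proof and merely cites Lemma~3.1 of [JWZZ1], so let me check the details rather than compare against a written-out original.

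The key point, which you correctly isolate, is that everything in sight is scale invariant at the exponent pair $(4/3,4)$ in two dimensions. The cut-off satisfies $\|\nabla\eta\|_{L^2(A_{r_1,2r_1})}+\|\nabla\eta\|_{L^2(A_{r_2/2,r_2})}\le C$ with $C$ absolute because $|\nabla\eta|^2\sim r^{-2}$ is integrated over an annulus of area $\sim r^2$. The Calder\'on--Zygmund bound $\|\nabla\varphi\|_{L^{4/3}(\R^2)}\le\Lambda\|\slashiii{D}\varphi\|_{L^{4/3}(\R^2)}$ for compactly supported $\varphi$ is legitimate: from $\slashiii{D}^2=-\Delta$ one gets $\nabla\varphi=\nabla(-\Delta)^{-1}\slashiii{D}(\slashiii{D}\varphi)$, a composition of Riesz transforms, bounded on $L^p$ for $1<p<\infty$ with a scale-invariant constant. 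The H\"older step $\|\eta\,e^u\psi\|_{L^{4/3}}\le\|e^u\|_{L^2(A_{r_1,r_2})}\|\psi\|_{L^4(A_{r_1,r_2})}$ and the analogous bound $\|\nabla\eta\cdot\psi\|_{L^{4/3}}\le C\|\psi\|_{L^4(A_{r_1,2r_1})}+C\|\psi\|_{L^4(A_{r_2/2,r_2})}$ both use the exponents $\frac34=\frac12+\frac14$ correctly. Finally $W^{1,4/3}_0(\R^2)\hookrightarrow L^4(\R^2)$ is the Gagliardo--Nirenberg--Sobolev embedding since $(4/3)^*=4$ when $n=2$, again with a scale-invariant constant. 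Restricting to $A_{2r_1,r_2/2}$, where $\varphi=\psi$, gives both summands on the left of \eqref{inqu}. Folding the CZ and Sobolev constants together produces the $\Lambda$ in front of the potential term and the $C$ in front of the boundary terms, both independent of $r_1,r_2$. This is exactly the mechanism the paper relies on (note how the constant $\Lambda$ reappears in {\bf Claim I.2} of the proof of Theorem~\ref{engy-indt} via the smallness condition $\int_{A_k}e^{2u_{1n}}+e^{2u_{2n}}\le\frac{1}{4\Lambda^2}$), and your proof supplies it cleanly.
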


\

As is mentioned in the introduction, for a blow-up sequence of solutions to the super-Toda system with uniformly bounded energy, there possibly exist 4 types of bubbling solutions of \eqref{equ-sl} or \eqref{equ-1} defined on $\R^2$ or $\R^2\setminus \{0\}$ and with finite energy.
It turns out that the two integral quantities defined as in \eqref{entire-energy-sl} and \eqref{entire-energy-st} for solutions of \eqref{equ-1} and \eqref{equ-sl}  on $\R^2\setminus \{0\}$ are both nonnegative. This is achieved by applying standard potential analysis and the Kelvin transformation.

\begin{lm}\label{lm-5.5}
 Let $(u_1,u_2,\psi_1,\psi_2)$ be a solution to (\ref{equ-1}) in  $ B_1\setminus\{0\}$
and with finite energy
$$\int_{B_1}(e^{2u_1}+e^{2u_2}+|\psi_1|^4+|\psi_2|^4) < \infty.$$  Then there exist $s_i$  with $s_i<2\pi$ such that
\begin{eqnarray*}
u_i(x) &=& -\frac{s_i}{2\pi}\ln|x|+h_i(x)\quad \text { near }  0
\end{eqnarray*}
for $i=1,2$. Moreover, $h_i(x)$ is bounded near $0$, and satisfies
 \begin{eqnarray*}
|\nabla h_i(x)| & \leq & \frac {C}{|x|^{1-\delta_i}}\quad \text { near } 0
\end{eqnarray*}
for some constant $C>0$  and some small $\delta_i>0$ and $i=1,2$.
\end{lm}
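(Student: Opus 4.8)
The plan is to run the standard potential-theoretic argument for a local singularity, exactly parallel to the proof of Theorem \ref{asyth} but now adapted to a punctured disc rather than a neighborhood of infinity. First I would fix notation: set
\[
F_{i}(x)=\sum_{j=1}^{2}a_{ij}\Bigl(e^{2u_j}-\tfrac12 e^{u_j}|\psi_j|^2\Bigr),
\]
so that $-\Delta u_i=F_i$ in $B_1\setminus\{0\}$. The energy hypothesis together with Lemma \ref{asy-psi} (applied after a Kelvin transform, or directly via the $\epsilon$-regularity of Lemma \ref{spinor-epsilon} once we know $\int_{B_r}e^{2u_i}$ is small for $r$ small) gives the pointwise bound $|\psi_i(x)|\le C|x|^{-1/2}\bigl(\int_{B_{2|x|}}|\psi_i|^4\bigr)^{1/4}$, hence $e^{u_i}|\psi_i|^2\in L^1(B_1)$; combined with $e^{2u_i}\in L^1$ this shows $F_i\in L^1(B_1)$. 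Then I would introduce the Newtonian potential
\[
w_i(x)=-\frac{1}{2\pi}\int_{B_1}\ln|x-y|\,F_i(y)\,dy,
\]
which satisfies $-\Delta w_i=F_i$ in $B_1$ and, because $F_i\in L^1$, obeys $w_i(x)/\ln|x|\to 0$ as $x\to 0$ (this is the standard estimate that a single-layer potential of an $L^1$ density is $o(\ln|x|)$ near any point). Setting $v_i=u_i-w_i$, the function $v_i$ is harmonic in $B_1\setminus\{0\}$.

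The next step is the removable-singularity/Bôcher-type analysis of $v_i$. Since $u_i^+\in L^\infty$ locally (here I would invoke the local version of Lemma \ref{blm}, or reprove it via the Brezis–Merle inequality \eqref{BMInq} exactly as in Lemma \ref{blm}, which uses only $F_i\in L^1$ near $0$), $v_i$ is bounded above near $0$ modulo a term $o(\ln|x|)$, so $v_i(x)\le C - C'\ln|x|$. A harmonic function on the punctured disc bounded above by $-C'\ln|x|+C$ must be of the form $v_i(x)=-\tfrac{s_i}{2\pi}\ln|x|+g_i(x)$ with $g_i$ harmonic (hence smooth) in all of $B_1$ and $s_i\ge 0$; this is the classical structure theorem for isolated singularities of harmonic functions in the plane. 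Defining $h_i:=w_i+g_i$, we get $u_i(x)=-\tfrac{s_i}{2\pi}\ln|x|+h_i(x)$ with $h_i$ bounded near $0$ (since $w_i$ is continuous and $g_i$ is smooth). The bound $s_i<2\pi$ then follows from $\int_{B_1}e^{2u_i}<\infty$: if $s_i\ge 2\pi$ then $e^{2u_i}\gtrsim |x|^{-s_i/\pi}e^{2h_i}\gtrsim |x|^{-2}$ near $0$, which is not integrable — here I use that $h_i$ is bounded. (Strictly $s_i\le 2\pi$; the strict inequality comes from the fact that $|x|^{-2}$ itself fails to be integrable, so equality is already excluded.)

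Finally, for the gradient estimate I would bootstrap the decay of $F_i$. From $s_i<2\pi$ and $h_i$ bounded we get $e^{2u_i}\le C|x|^{-s_i/\pi}=C|x|^{-2+\varepsilon}$ for some $\varepsilon>0$; feeding this improved decay back into Lemma \ref{asy-psi} gives the stronger spinor bound \eqref{asy-psi-2}, namely $|\psi_i(x)|\le C|x|^{-1/2+\delta}$, hence $e^{u_i}|\psi_i|^2\le C|x|^{-1+\delta'}$ for some $\delta'>0$. Thus $F_i(x)=O(|x|^{-2+\varepsilon})$, so $\nabla w_i(x)=-\tfrac{1}{2\pi}\int_{B_1}\tfrac{x-y}{|x-y|^2}F_i(y)\,dy$ is estimated by splitting the integral into $|y|<2|x|$ and $|y|>2|x|$; a routine computation gives $|\nabla w_i(x)|\le C|x|^{-1+\delta_i}$ for a suitable $\delta_i>0$. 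Since $\nabla g_i$ is bounded near $0$, the same bound holds for $\nabla h_i=\nabla w_i+\nabla g_i$, as claimed. The main obstacle — and the place where one must be careful rather than merely routine — is establishing the pointwise spinor decay without a singularity-removal hypothesis: one needs $\int_{B_r}e^{2u_i}$ to be small on small balls in order to invoke Lemma \ref{spinor-epsilon}, which in turn requires first knowing that no energy concentrates at $0$ for $e^{2u_i}$; this is guaranteed by the finite-energy hypothesis and absolute continuity of the integral, but the interplay (spinor decay needs small scalar energy, which is automatic, but scalar decay needs spinor decay) means the two estimates must be obtained in the right order, starting from the crude $|x|^{-1/2}$ spinor bound of \eqref{asy-psi-1} and only afterwards upgrading.
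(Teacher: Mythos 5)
Your overall potential-theoretic framework is the right one and matches what the cited references (Bartolucci--Tarantello, Thm.~4; \cite{JZZ3}, Prop.~4.5) do, but there is a genuine gap at the step where you dispose of the harmonic part $v_i=u_i-w_i$. You assert that $u_i^+\in L^\infty$ near the origin, citing ``the local version of Lemma \ref{blm}.'' That lemma does not localize to a punctured disc: its proof writes $u_i=u_{i1}+u_{i2}+u_{i3}$ and controls the harmonic piece $u_{i3}$ by the mean value inequality on a \emph{full} ball, which is unavailable here because $u_i$ solves the equation only on $B_1\setminus\{0\}$, so $u_{i3}$ is harmonic only on the punctured ball. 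Worse, the assertion is simply false in the generality of the lemma: the conclusion you are trying to prove allows $0<s_i<2\pi$, in which case $u_i(x)\sim -\frac{s_i}{2\pi}\ln|x|\to+\infty$ as $x\to 0$. If one could prove $u_i^+$ bounded near $0$, then the Bôcher argument you sketch would force $s_i\le 0$, which is a strictly stronger (and incorrect) conclusion. The correct route, as in Bartolucci--Tarantello, avoids any a priori pointwise bound on $u_i^+$: one uses $e^{2u_i}\in L^1$ directly to exclude any singular mode $r^{-n}\cos n\theta$, $n\ge 1$, in the Laurent expansion of $v_i$ on the punctured disc (such a mode would make $e^{2u_i}\gtrsim e^{2c r^{-n}}r^{2\varepsilon}$ on an angular sector, contradicting integrability, because $|w_i|=o(\ln(1/r))$ cannot compensate an $r^{-n}$ growth). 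Equivalently, one works with spherical averages $\bar u_i(r)$ and shows $r\bar u_i'(r)$ has a limit as $r\to 0$ from $F_i\in L^1$, then controls oscillation on dyadic annuli by small-energy elliptic estimates.

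Two smaller remarks. First, the fact that $e^{u_j}|\psi_j|^2\in L^1(B_1)$ follows immediately from Young's inequality $e^{u_j}|\psi_j|^2\le\frac12(e^{2u_j}+|\psi_j|^4)$ and the energy hypothesis; no spinor pointwise decay is needed for that, and invoking Lemma \ref{asy-psi} at this stage is an unnecessary (and, as you correctly note, slightly circular-looking) detour. Second, your claim that $h_i$ is bounded ``since $w_i$ is continuous'' is premature: $w_i$ is the logarithmic potential of an $L^1$ density and a priori satisfies only $w_i=o(\ln(1/|x|))$. Boundedness of $w_i$ (and hence of $h_i$) only comes out \emph{after} the bootstrap, once you know $s_i<2\pi$, upgrade the spinor decay via \eqref{asy-psi-2}, conclude $F_i=O(|x|^{-2+\varepsilon})$, and then estimate $w_i$ and $\nabla w_i$ with the split of the integral you describe. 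So the logical order must be: $F_i\in L^1$ $\Rightarrow$ $v_i=-\frac{s_i}{2\pi}\ln|x|+g_i$ with $g_i$ harmonic and $w_i=o(\ln(1/|x|))$ $\Rightarrow$ $s_i<2\pi$ from $e^{2u_i}\in L^1$ $\Rightarrow$ improved decay of $F_i$ $\Rightarrow$ $w_i$ and $\nabla w_i$ bounded as stated, giving the final form of $h_i$ and $\nabla h_i$.
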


\begin{proof}
The conclusions follow from applying a similar argument as in the proof of Proposition 4.5 in \cite{JZZ3}, where the case of super-Liouville type equations was studied. For the case of Liouville type equations, results of this kind can be found in the proof of Theorem 4  in \cite{BT}.
\end{proof}

Consequently, we have

\begin{thm}\label{nonnegative-st}
Let $(u_1,u_2,\psi_1,\psi_2)$ be a solution to (\ref{equ-1}) in  $ \R^2\setminus\{0\}$
with finite energy
$$\int_{\R^2}(e^{2u_1}+e^{2u_2}+|\psi_1|^4+|\psi_2|^4) < \infty.$$
Then we have
\begin{equation*}
\alpha_i =\int_{\R^2}e^{2u_i}-\frac 12 e^{u_i}|\psi_i|^2dx >  0,
\end{equation*}for $i=1, 2$.
\end{thm}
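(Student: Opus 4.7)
\medskip

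\noindent\textbf{Proof proposal.} The plan is to reduce the positivity of $\alpha_i$ to a computation of the total integral $\int_{\R^2}-\Delta u_i$ via divergence theorem on large annuli, using precise logarithmic asymptotics of $u_i$ at both the removable/non-removable singularity at $0$ and at $\infty$. The key input is Lemma \ref{lm-5.5}, which I want to use once directly and once after a Kelvin transform.

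First, I apply Lemma \ref{lm-5.5} to $(u_1,u_2,\psi_1,\psi_2)$ on $B_1\setminus\{0\}$ to obtain, near $0$,
\[
u_i(x)=-\frac{s_i}{2\pi}\ln|x|+h_i(x),\qquad s_i<2\pi,\qquad |\nabla h_i(x)|\le \frac{C}{|x|^{1-\delta_i}},
\]
for $i=1,2$. Next, I introduce the Kelvin transform $v_i(x)=u_i(x/|x|^2)-2\ln|x|$, $\phi_i(x)=|x|^{-1}\psi_i(x/|x|^2)$, which by conformal invariance (as in \eqref{sequ-1}) is again a finite-energy solution of \eqref{equ-1} on $\R^2\setminus\{0\}$. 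Applying Lemma \ref{lm-5.5} to $(v_1,v_2,\phi_1,\phi_2)$ gives $v_i(x)=-\frac{\tilde s_i}{2\pi}\ln|x|+\tilde h_i(x)$ with $\tilde s_i<2\pi$ near $0$. Translating back through $y=x/|x|^2$, this yields the asymptotic behavior of $u_i$ near infinity,
\[
u_i(y)=-\frac{\beta_i}{2\pi}\ln|y|+\hat h_i(y),\qquad \beta_i=4\pi-\tilde s_i>2\pi,
\]
with $|\nabla\hat h_i(y)|\le C/|y|^{1+\delta_i'}$ (the extra factor of $|y|^{-2}$ from the Jacobian of the inversion converts the singular derivative bound into a decaying one).

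With these asymptotics in hand, I integrate the $u_i$-equation over the annulus $A_{\epsilon,R}=\{\epsilon<|x|<R\}$ and apply the divergence theorem:
\[
\int_{A_{\epsilon,R}}\!\!\sum_{j=1}^{2}a_{ij}\Bigl(e^{2u_j}-\tfrac12 e^{u_j}|\psi_j|^2\Bigr)\,dx \;=\;-\int_{|x|=R}\!\partial_r u_i\,ds\;+\;\int_{|x|=\epsilon}\!\partial_r u_i\,ds.
\]
Plugging in the two expansions, the boundary integral at $|x|=R$ equals $-\beta_i+O(R^{-\delta_i'})$ and the one at $|x|=\epsilon$ equals $-s_i+O(\epsilon^{\delta_i})$; the $\hat h_i$ and $h_i$ contributions are exactly the terms controlled by the gradient bounds, which are integrable over circles of the appropriate radius. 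Letting $\epsilon\to 0$ and $R\to\infty$, together with the finite-energy assumption to identify the left-hand side as an absolutely convergent integral over $\R^2$, yields
\[
2\alpha_1-\alpha_2=\beta_1-s_1,\qquad -\alpha_1+2\alpha_2=\beta_2-s_2.
\]

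Since $\beta_i>2\pi>s_i$ strictly, both right-hand sides are strictly positive. Inverting the Cartan matrix, $(a^{ij})=\frac{1}{3}\bigl(\begin{smallmatrix}2&1\\1&2\end{smallmatrix}\bigr)$, gives
\[
\alpha_1=\tfrac{1}{3}\bigl(2(\beta_1-s_1)+(\beta_2-s_2)\bigr)>0,\qquad \alpha_2=\tfrac{1}{3}\bigl((\beta_1-s_1)+2(\beta_2-s_2)\bigr)>0,
\]
which is the desired conclusion. The only real obstacle is step two, namely making sure that after the Kelvin inversion the decay of $\hat h_i$ is fast enough for the boundary integrals to contribute precisely $-\beta_i$ in the limit; once the $|y|^{-1-\delta}$ gradient bound at infinity is established, the rest is essentially routine.
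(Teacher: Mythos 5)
Your proposal is correct and follows essentially the same route as the paper: apply Lemma \ref{lm-5.5} at the origin, Kelvin-transform and apply it again to get the asymptotics at infinity, then integrate $-\Delta u_i$ over an annulus and pass to the limit. The only cosmetic difference is that you explicitly invert the Cartan matrix at the end, whereas the paper simply observes that $2\alpha_1>\alpha_2$ and $2\alpha_2>\alpha_1$ together force $\alpha_1,\alpha_2>0$.
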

\begin{proof} For $i=1, 2$, by Lemma \ref{lm-5.5}, we have
\begin{eqnarray*}
u_i(x) &=& -\frac{s_{i1}}{2\pi}\ln|x|+h_{i1}(x)\quad \text { near } 0
\end{eqnarray*}
for some $s_{i1}<2\pi$. Here, $h_{i1}(x)$ is bounded near $0$, and
 \begin{eqnarray*}
|\nabla h_{i1}(x)| & \leq & \frac {C}{|x|^{1-\delta_{i1}}}\quad \text { near } 0
\end{eqnarray*}
for some constant $C>0$ and some small $\delta_{i1}>0$. Consider the Kelvin transformation of $(u_1,u_2,\psi_1,\psi_2)$
\begin{eqnarray*}
&& v_i(x)=u_i(\frac {x}{|x|^2})-2\ln |x|,\\
&& \phi_i (x)=|x|^{-1} \psi_i (\frac{x}{|x|^2}),
\end{eqnarray*}
for $i=1,2$.
By conformal invariance, $(v_1,v_2,\phi_1,\phi_2)$ satisfies \eqref{equ-1} in  $ \R^2\setminus\{0\}$
with finite energy
$$\int_{\R^2}(e^{2v_1}+e^{2v_2}+|\phi_1|^4+|\phi_2|^4) < \infty.$$
By applying Lemma \ref{lm-5.5} again, for $i=1,2$, we have
\begin{eqnarray*}
v_i(x) &=& -\frac{s_{i2}}{2\pi}\ln|x|+h_{i2}(x)\quad \text { near } 0
\end{eqnarray*}
for some $s_{i2}<2\pi$, while $h_{i2}(x)$ is bounded near $0$ and
 \begin{eqnarray*}
|\nabla h_{i2}(x)| & \leq & \frac {C}{|x|^{1-\delta_{i2}}}\quad \text { near } 0
\end{eqnarray*}
for some constant $C>0$ and some small $\delta_{i2}>0$.
Therefore, by using the Kelvin transformation again we get
\begin{eqnarray*}
u_i(x) &=& (\frac{s_{i2}}{2\pi}-2)\ln|x|+h_{i3}(x)\quad \text { near } \infty,
\end{eqnarray*}
where $h_{i3}(x)$ satisfies
 \begin{eqnarray*}
|\nabla h_{i3}(x)| & \leq & \frac {C}{|x|^{1+\delta_{i2}}}\quad \text { near } \infty.
\end{eqnarray*}
Now we integrate the equation for $u_1$ in \eqref{equ-1} on $B_R\backslash B_r$ to obtain
\begin{eqnarray*}
&& \int_{B_R\backslash B_r}2e^{2u_1}-e^{2u_2}-e^{u_1}|\psi_1|^2+\frac 12 e^{u_2}|\psi_2|^2   \\
&= & -\int_{B_R\backslash B_r}\triangle u_1            \\
&=& -\int_{\partial B_R}\frac{\partial u_1}{\partial n}+\int_{\partial B_r}\frac{\partial u_1}{\partial n}.
\end{eqnarray*}
Letting $R\rightarrow \infty$ and $r\rightarrow 0$, we have
$$
2\alpha_1-\alpha_2=2\pi(2-\frac{s_{12}+s_{11}}{2\pi})>0.
$$
Similarly,  we  have
$$
2\alpha_2-\alpha_1=2\pi(2-\frac{s_{22}+s_{21}}{2\pi})>0.
$$
Hence we obtain $\alpha_i>0$ for $i=1,2$.
\end{proof}

For super-Liouville equations, we have analogous results. Since the proofs are almost the same as for the super-Toda case, we omit them and only state
the corresponding results.
\begin{lm}\label{lm-5.7}
 Let $(u,\psi)$ be a solution to (\ref{equ-sl}) in  $ B_1\setminus\{0\}$
and with finite energy
$$\int_{B_1}(e^{2u}+|\psi|^4) < \infty.$$
Then there exists $s<2\pi$ such that
\begin{eqnarray*}
u(x) &=& -\frac{s}{2\pi}\ln|x|+h(x)\quad \text { near } 0.
\end{eqnarray*}
Moreover, $h(x)$ is bounded near $0$, and satisfies
 \begin{eqnarray*}
|\nabla h(x)| & \leq & \frac {C}{|x|^{1-\delta}}\quad \text { near } 0
\end{eqnarray*}
for some constant $C>0$ and some small $\delta>0$.
\end{lm}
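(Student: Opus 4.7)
The plan is to adapt the argument for the super-Toda analogue (Lemma~\ref{lm-5.5}), which in turn follows Proposition~4.5 of~\cite{JZZ3}. The strategy is potential-theoretic: represent $u$ on a small punctured disk as a Newtonian potential plus a harmonic remainder with at most an isolated logarithmic singularity at $0$, then identify the coefficient of that logarithm and use the finite energy of $e^{2u}$ to force it strictly below $2\pi$.

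First I would exploit the small-energy $\varepsilon$-regularity for the spinor equation. Since absolute continuity gives $\int_{B_r}e^{2u}\to 0$ as $r\to 0$, a scaled application of Lemma~\ref{spinor-epsilon} (along the lines of the proof of Lemma~\ref{spinor-energygap}) yields the pointwise decay $|\psi(x)|\leq C|x|^{-1/2}$ on a small punctured disk around $0$. Combined with Cauchy--Schwarz applied to the finite energy hypothesis, this shows that the right-hand side $f:=2e^{2u}-e^u|\psi|^2$ lies in $L^1(B_1)$.

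Next I would use the standard isolated-singularity machinery for $-\Delta u = f$ with $f\in L^1$. Convolution of $f$ against the two-dimensional Newtonian kernel produces a function $w$ on $B_1$ with $-\Delta w=f$ and at worst mild logarithmic growth; the difference $u-w$ is then harmonic on $B_1\setminus\{0\}$ and lies in $L^1_{\mathrm{loc}}(B_1)$, so a B\^ocher-type removable-singularity argument yields the expansion $(u-w)(x)=-\frac{s}{2\pi}\ln|x|+g_0(x)$ with $g_0$ smooth and harmonic on $B_1$. Setting $h:=g_0+w$ gives the desired decomposition $u(x)=-\frac{s}{2\pi}\ln|x|+h(x)$ with $h$ bounded near $0$.

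The main obstacle is the strict inequality $s<2\pi$. If instead $s\geq 2\pi$, the expansion would force $e^{2u(x)}\gtrsim e^{2h(x)}|x|^{-s/\pi}$ near $0$, which is not integrable on any neighbourhood of $0$; this contradicts $\int_{B_1}e^{2u}<\infty$. The spinor term $e^u|\psi|^2$ is of strictly lower order than $e^{2u}$ thanks to the bound $|\psi|\leq C|x|^{-1/2}$, so it does not interfere with this comparison. Once $s<2\pi$ is secured, one chooses $p>1$ small enough that $e^{2u}$ and $e^u|\psi|^2$ both lie in $L^p_{\mathrm{loc}}(B_1)$; then $-\Delta h=f\in L^p_{\mathrm{loc}}$, and scaled $W^{2,p}$/Calder\'on--Zygmund estimates on dyadic annuli around the origin produce the claimed gradient bound $|\nabla h(x)|\leq C|x|^{-1+\delta}$ for some small $\delta>0$.
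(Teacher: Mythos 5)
Your overall strategy is the right one and is in line with what the paper implicitly relies on (the potential-theoretic decomposition of [BT], Theorem 4, and \cite{JZZ3}, Proposition~4.5), but there is a genuine gap in the middle of your argument that makes the key step circular.

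The trouble is the sentence where you set $h:=g_0+w$ and conclude immediately that ``$h$ is bounded near $0$.'' The Newton potential $w$ of an $L^1$ density $f$ on $B_1$ is in general \emph{not} bounded near the singularity; the standard Brezis--Merle argument only gives $w(x)=o(\ln(1/|x|))$ as $|x|\to 0$, which is strictly weaker. So after the B\^ocher step you have
$u(x)=-\frac{s}{2\pi}\ln|x|+g_0(x)+w(x)$ with $g_0$ bounded and $w=o(\ln(1/|x|))$, but not $h$ bounded. Your proof of $s<2\pi$, however, uses the lower bound $e^{2u}\gtrsim e^{2h}|x|^{-s/\pi}$, which requires $h$ to be bounded \emph{below}; and in your own outline, full boundedness of $h$ is established only \emph{after} $s<2\pi$ via the $L^p$ bootstrap. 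As written the argument is therefore circular. Moreover, the crude bound $w=o(\ln(1/|x|))$ only yields the non-strict inequality $s\le 2\pi$: for every $\varepsilon>0$ one gets $e^{2u}\gtrsim |x|^{-s/\pi+2\varepsilon}$ near $0$, hence $s\le 2\pi$, and strictness does not follow from this alone.

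What is missing is a lower bound on $w$ obtained \emph{before} one knows $s<2\pi$. In the present setting this comes from the structure of the nonlinearity: the pointwise spinor decay you derived, $|\psi(x)|=o(|x|^{-1/2})$, shows that $f=2e^{2u}-e^u|\psi|^2=e^u(2e^u-|\psi|^2)$ is nonnegative in a punctured neighbourhood of $0$ whenever $u$ is unbounded there (and if $u$ is bounded near $0$ the singularity is removable and there is nothing to prove). Splitting the Newton potential into the near-field contribution from this nonnegative part (which is $\ge 0$) and the far-field contribution from the remaining $L^1$ density (which is uniformly bounded) gives $w(x)\ge -C$ near $0$. With $g_0$ bounded and $w$ bounded below, $e^{2u}\ge c\,|x|^{-s/\pi}$ near $0$, and $\int_{B_1}e^{2u}<\infty$ now forces the strict inequality $s<2\pi$. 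Only after this can you run the $L^p$ bootstrap ($f\in L^p_{\mathrm{loc}}$ for some $p>1$, Calder\'on--Zygmund on dyadic annuli) to upgrade $w$ to a bounded, H\"older-continuous function and obtain both the boundedness of $h$ and the decay $|\nabla h(x)|\le C|x|^{-1+\delta}$. If you insert the sign observation on $f$ near the origin (or some equivalent lower bound on $w$) before the $s<2\pi$ step, your outline becomes logically sound and matches the argument the paper defers to.
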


\begin{thm}\label{nonnegative-sl}
Let $(u,\psi)$ be a solution to (\ref{equ-sl}) in  $ \R^2\setminus\{0\}$
with finite energy
$$\int_{\R^2}(e^{2u}+|\psi|^4) < \infty.$$
Then we have
\begin{equation*}
\alpha =\int_{\R^2}e^{2u}-\frac 12 e^{u}|\psi|^2dx >0.
\end{equation*}
\end{thm}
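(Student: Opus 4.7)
The plan is to mimic the argument given for Theorem \ref{nonnegative-st} in the super-Toda case, now using the scalar version of the asymptotics in Lemma \ref{lm-5.7} at both ends (the local singularity at $0$ and infinity). The key observation is that a two-sided logarithmic asymptotic with leading coefficients strictly less than $2\pi$ forces the integral of the right-hand side of the equation to be strictly positive via the divergence theorem.

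First, apply Lemma \ref{lm-5.7} directly to $(u,\psi)$ to obtain
\[
u(x) = -\frac{s_{1}}{2\pi}\ln|x| + h_{1}(x) \qquad \text{near } 0,
\]
with $s_{1} < 2\pi$ and $h_{1}$ bounded near $0$ satisfying $|\nabla h_{1}(x)| \leq C|x|^{-1+\delta_{1}}$. Next, form the Kelvin transform
\[
v(x) = u\!\left(\tfrac{x}{|x|^{2}}\right) - 2\ln|x|, \qquad \phi(x) = |x|^{-1}\psi\!\left(\tfrac{x}{|x|^{2}}\right);
\]
by conformal invariance $(v,\phi)$ solves the super-Liouville equation on $\R^{2}\setminus\{0\}$ with finite energy. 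A second application of Lemma \ref{lm-5.7} to $(v,\phi)$ gives
\[
v(x) = -\frac{s_{2}}{2\pi}\ln|x| + h_{2}(x) \qquad \text{near } 0,
\]
with $s_{2} < 2\pi$ and the corresponding gradient bound. Undoing the Kelvin transform, this translates into the behavior of $u$ near infinity:
\[
u(x) = \left(\tfrac{s_{2}}{2\pi} - 2\right)\ln|x| + h_{3}(x), \qquad |\nabla h_{3}(x)| \leq C|x|^{-1-\delta_{2}} \quad \text{near } \infty.
\]

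Now integrate the equation $-\Delta u = 2 e^{2u} - e^{u}|\psi|^{2}$ over the annulus $B_{R}\setminus B_{r}$ and apply the divergence theorem:
\[
\int_{B_{R}\setminus B_{r}} \bigl(2 e^{2u} - e^{u}|\psi|^{2}\bigr)\,dx = -\int_{\partial B_{R}}\frac{\partial u}{\partial n}\,d\sigma + \int_{\partial B_{r}}\frac{\partial u}{\partial n}\,d\sigma.
\]
Using the two asymptotic expansions above to evaluate the boundary integrals, the gradient error terms contribute zero in the limit, and letting $R \to \infty$ and $r \to 0$ yields
\[
2\alpha = 2\pi\left(2 - \frac{s_{1} + s_{2}}{2\pi}\right) = 4\pi - s_{1} - s_{2}.
\]
Since $s_{1} < 2\pi$ and $s_{2} < 2\pi$, the right-hand side is strictly positive, hence $\alpha > 0$.

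The only genuine point to check carefully is that the gradient remainders $\nabla h_{1}$ at $0$ and $\nabla h_{3}$ at $\infty$ really contribute $0$ to the boundary integrals in the limit; this is immediate from the bounds $|\nabla h_{1}(x)| \leq C|x|^{-1+\delta_{1}}$ (so that $r\cdot r^{-1+\delta_{1}} = r^{\delta_{1}} \to 0$ as $r \to 0$) and $|\nabla h_{3}(x)| \leq C|x|^{-1-\delta_{2}}$ (so that $R \cdot R^{-1-\delta_{2}} = R^{-\delta_{2}} \to 0$ as $R \to \infty$). All other steps are verbatim translations of the super-Toda argument to the single-field setting, and no new difficulty arises since we no longer need the coupled identity relating $\alpha_1$ and $\alpha_2$.
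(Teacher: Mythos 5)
Your proposal is correct and matches the paper's intended proof exactly: the paper explicitly states that the super-Liouville case follows by the same argument as Theorem \ref{nonnegative-st}, namely applying Lemma \ref{lm-5.7} at both ends via the Kelvin transform, integrating the equation over an annulus, and using the divergence theorem to obtain $2\alpha = 4\pi - s_1 - s_2 > 0$. Your verification that the gradient remainders contribute nothing in the limit is the right point to check and is handled correctly.
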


Finally, we are able to show the energy identities for spinors.

\

\noindent{\bf Proof of Theorem \ref{engy-indt}:}  First of all, by the energy condition and small energy regularity, it is easy to see that, by passing to subsequences if necessary, $\psi_{jn}$  converges weakly to some limit $ \psi_j$  in $L^4$  and the convergence is in $C_{loc}^{2}$ on $M\backslash (\Sigma_{1n}\cup \Sigma_{2n})$ for all $j=1,2$.

Without loss of generality, we may assume that $\Sigma_{1n}\cup \Sigma_{2n}\neq \emptyset$. Given a blow-up point $p\in \Sigma_{1n}\cup \Sigma_{2n}$, let $D_{2\delta}$ be a small ball centered at $p=0$ such that $p$ is the only blow-up point in $D_{2\delta}$. By conformal invariance, we shall rescale each $(u_{1n},u_{2n},\psi_{1n},\psi_{2n})$ near the blow-up point $p$  as in the case of super-Liouville equations. Since
$$
\max_{\overline{D}_\delta}\max\{u_{1n},u_{2n}\}\rightarrow +\infty \qquad \text { as } n\rightarrow \infty,
$$
without loss of generality, we  may also assume that there exists a sequence of points $x_n\in \overline{D}_\delta $ such that $$u_{1n}(x_n)=\max_{\overline{D}_\delta}\max\{u_{1n},u_{2n}\}.$$
Since $p$ is the only blow-up point in $D_{2\delta}$, we have $x_n\rightarrow p=0$ and
$u_{1n}(x_n)\rightarrow +\infty$. Let $\lambda_n =e^{-u_{1n}(x_n)}\rightarrow 0$ and define the rescaled fields $(\widetilde{u}_{1n},\widetilde{u}_{2n},\widetilde{\psi}_{1n},\widetilde{\psi}_{2n})$ as
\begin{equation*}
\left\{
\begin{array}{rcl}
\widetilde{u}_{1n}(x)&=&u_{1n}(\lambda_nx+x_n)+\ln {\lambda_n}\\
\widetilde{u}_{2n}(x)&=&u_{2n}(\lambda_nx+x_n)+\ln {\lambda_n}\\
\widetilde{\psi}_{1n}(x)&=&\lambda_n^{\frac 12}\psi_{1n}(\lambda_nx+x_n)\\
\widetilde{\psi}_{2n}(x)&=&\lambda_n^{\frac 12}\psi_{2n}(\lambda_nx+x_n)
\end{array}
\right.
\end{equation*}
for any $x\in \overline{D}_{\frac{\delta}{2\lambda_n}}$. Then we have
\begin{equation*}
\left\{
\begin{array}{rcl}
-\triangle \widetilde{u}_{1n}(x)&=& 2 e^{2\widetilde{u}_{1n}(x)}-e^{2\widetilde{u}_{2n}(x)}-e^{\widetilde{u}_{1n}(x)}|\widetilde{\psi}_{1n}(x)|^2+\frac 12e^{\widetilde{u}_{2n}(x)}|\widetilde{\psi}_{2n}(x)|^2, \\
-\triangle \widetilde{u}_{2n}(x)&=& 2 e^{2\widetilde{u}_{2n}(x)}-e^{2\widetilde{u}_{1n}(x)}-e^{\widetilde{u}_{2n}(x)}|\widetilde{\psi}_{2n}(x)|^2+\frac 12e^{\widetilde{u}_{1n}(x)}|\widetilde{\psi}_{1n}(x)|^2, \\
\slashiii {D}\widetilde{\psi}_{1n}(x)&=&-e^{\widetilde{u}_{1n}(x)}\widetilde{\psi}_{1n}(x),\\
\slashiii {D}\widetilde{\psi}_{2n}(x)&=&-e^{\widetilde{u}_{2n}(x)}\widetilde{\psi}_{2n}(x),
\end{array}
\right.
\end{equation*}
in $ \overline{D}_{\frac{\delta}{2\lambda_n}}$. Furthermore,
$$
\int_{D_{\frac{\delta}{2\lambda_n}}}\left (e^{2\widetilde{u}_{1n}(x)}+e^{2\widetilde{u}_{2n}(x)}+|\widetilde{\psi}_{1n}(x)|^4 +|\widetilde{\psi}_{2n}(x)|^4\right)dx<C.
$$
Noticing that $\widetilde{u}_{1n}(0)=0\geq \widetilde{u}_{2n}(0)$ and $\widetilde {u}_{in}(x)\leq 0$, from Theorem \ref{BMTH}, we have two cases:

\

\noindent {\bf Case I}: By passing to a subsequence, $(\widetilde{u}_{1n},\widetilde{u}_{2n},\widetilde{\psi}_{1n},\widetilde{\psi}_{2n})$ converges in  $C^2_{loc}(\R^2)\times C^2_{loc}(\R^2)\times  C^2_{loc}(\Gamma(\Sigma \R^2)) \times  C^2_{loc}(\Gamma(\Sigma \R^2))$ to some $(\widetilde
u_1,\widetilde u_2, \widetilde \psi_1,\widetilde \psi_2)$ satisfying
\begin{equation*}
\left\{
\begin{array}{rcl}
-\triangle \widetilde{u}_{1}(x)&=& 2 e^{2\widetilde{u}_{1}(x)}-e^{2\widetilde{u}_{2}(x)}-e^{\widetilde{u}_{1}(x)}|\widetilde{\psi}_{1}(x)|^2+\frac 12e^{\widetilde{u}_{2}(x)}|\widetilde{\psi}_{2}(x)|^2, \\
-\triangle \widetilde{u}_{2}(x)&=& 2 e^{2\widetilde{u}_{2}(x)}-e^{2\widetilde{u}_{1}(x)}-e^{\widetilde{u}_{2}(x)}|\widetilde{\psi}_{2}(x)|^2+\frac 12e^{\widetilde{u}_{1}(x)}|\widetilde{\psi}_{1}(x)|^2, \\
\slashiii {D}\widetilde{\psi}_{1}(x)&=&-e^{\widetilde{u}_{1}(x)}\widetilde{\psi}_{1}(x),\\
\slashiii {D}\widetilde{\psi}_{2}(x)&=&-e^{\widetilde{u}_{2}(x)}\widetilde{\psi}_{2}(x),
\end{array}
\right.
\end{equation*}
in $\R^2$ and
$$
\int_{\R^2}\left (e^{2\widetilde{u}_{1}(x)}+e^{2\widetilde{u}_{2}(x)}+|\widetilde{\psi}_{1}(x)|^4 +|\widetilde{\psi}_{2}(x)|^4\right)dx<C.
$$
Hence, it follows from Theorem \ref{asyth}, that
\[\int_{\R^2}(e^{2\widetilde{u}_i}-\frac 12 e^{\widetilde {u}_i}|\widetilde{\psi}_i|^2)dx > 2\pi,\] for $i=1,2$, and $(\widetilde
u_1,\widetilde u_2, \widetilde \psi_1,\widetilde \psi_2)$ is  a bubbling solution of \eqref{equ-1} on $\R^2$.

\
\

\noindent {\bf Case II}:   By passing to a subsequence, $(\widetilde{u}_{1n},\widetilde{\psi}_{1n} , \widetilde{\psi}_{2n}) $ converges in  $C^2_{loc}(\R^2)\times C^2_{loc}(\Gamma(\Sigma \R^2))$ to some $(\widetilde u_1, \widetilde \psi_1,\widetilde \psi_2)$ and $\widetilde{u}_{2n}$ converges to $-\infty $ uniformly in any compact subset in $\R^2$. It is clear that $(\widetilde u_1, \widetilde \psi_1,\widetilde \psi_2)$  satisfy
\begin{equation*}
\left\{
\begin{array}{rcl}
-\triangle \widetilde{u}_{1}(x)&=& 2 e^{2\widetilde{u}_{1}(x)}-e^{\widetilde{u}_{1}(x)}|\widetilde{\psi}_{1}(x)|^2, \\
\slashiii {D}\widetilde{\psi}_{1}(x)&=&-e^{\widetilde{u}_{1}(x)}\widetilde{\psi}_{1}(x),\\
\slashiii {D}\widetilde{\psi}_{2}(x)&=&0,
\end{array}
\right.
\end{equation*}
in $\R^2$ and
$$
\int_{\R^2}\left (e^{2\widetilde{u}_{1}(x)}+|\widetilde{\psi}_{1}(x)|^4 +|\widetilde{\psi}_{2}(x)|^4\right)dx<C.
$$
By the removability of a global singularity of a solution of a super-Liouville equation (see Proposition 2.5 in \cite{JWZZ1}), we have
\[\int_{\R^2}(e^{2\widetilde{u}_1}-\frac 12 e^{\widetilde {u}_1}|\widetilde{\psi}_1|^2)dx=2\pi,\]
and
$(\widetilde u_1,\widetilde \psi_1 )$ is  a bubbling solution of the super-Liouville equation (\ref{equ-sl}) on $\R^2$. By singularity removability for harmonic spinors with finite energy in dimension two,  $\widetilde \psi_2$ can be  conformally extended to a harmonic spinor on $S^2$. By the well-known fact that there is no nontrivial harmonic spinor on $S^2$, we have that $\widetilde \psi_2\equiv0$.

\

Then, to prove the theorem, it suffices to prove that for each fixed blow-up point  $p_i\in \Sigma_{1n}\cup \Sigma_{2n}$, there are at most finitely many bubbling solutions $(u_1^{k},u_2^k,\xi_1^{k},\xi_2^k)$ of (\ref{equ-1}) on $\R^2$ or on $\R\setminus{\{0\}}$ for $k=1,2,\cdots, K$ and bubbling solutions $(u^k_j,\phi^k_j)$ of (\ref{equ-sl}) on $\R^2$ or on $\R\setminus{\{0\}}$ for $k=1,2,\cdots, L$ and $j=1,2$  such that
\begin{equation*}
\lim_{\delta_i\rightarrow 0}\lim_{n\rightarrow
\infty}\int_{D_{\delta_i}}|\psi_{jn}|^4=\sum_{k=1}^{K}\int_{\R^2}|\xi_j^{k}|^4 + \sum_{k=1}^{L}\int_{\R^2}|\phi^{k}_j|^4,  \qquad j=1, 2
\end{equation*}

We shall follow the blow-up scheme for  super-Liouville equations (see Theorem 1.2 in \cite{JWZZ1}).  This kind of blow-up scheme was used for approximate harmonic maps in e.g. \cite{DT}. In contrast to the case of super-Liouville equations where there is only one type of bubble, namely, an entire solution of \eqref{equ-sl}  on $\R^2$ which can be conformally extended to a solution on $\S^2$, here in the super-Toda case, we have possibly four types of bubbles, i.e.,  solutions of \eqref{equ-1} or \eqref{equ-sl}  on $\R^2$ or on $\R^2 \setminus \{0\}$.

In this scheme, without loss of generality, we can assume that there is only one bubble at each blow up point $p=0\in \Sigma_{1n}\cup \Sigma_{2n}$.  The general case of multiple bubbles appearing can be handled by an induction argument (see e.g. \cite{DT}), since the total energies of $(\widetilde{u}_{1n},\widetilde{u}_{2n},\widetilde{\psi}_{1n} , \widetilde{\psi}_{2n} )$ are uniformly bounded and the energy of the scalar field for each of these four types of bubbles has a universal positive lower bound. In fact,  in this blow-up scheme, one can easily see how multiple bubbles (if any) can possibly occur and how these four types of bubbles satisfying \eqref{bubbleproperty-st} and \eqref{bubbleproperty-sl} can possibly occur.

Then, to continue our proof,   it suffices to show that the energies in the neck domain for spinors are converging to 0, i.e.,
\begin{equation}\label{e3}
\lim_{\delta\rightarrow 0}\lim_{R\rightarrow +\infty}\lim_{n\rightarrow
\infty}\int_{A_{\delta,R,n}}|\psi_{jn}|^4dx=0,  \quad  j=1,2
\end{equation}
Here the neck domain $A_{\delta, R, n}$  is
$$ A_{\delta, R, n}=\{x\in \R^2 \ | \  \lambda_n R\leq |x-x_n|\leq \delta\}.$$

\

 We shall develop the proof separately for Cases I and II.

\
\

\noindent {\bf Case I.}    First, we claim that:

\

\noindent {\bf Claim I.1}:  For any $\epsilon>0$, there is an $N>1$ such that for any $n\geq N$, we have
\begin{equation}\label{e4}
\int_{D_r(x_n)\setminus
D_{e^{-1}r}(x_n)}(|\psi_{1n}|^4+|\psi_{2n}|^4)<\epsilon, \quad
\forall r\in [e\lambda _nR, \delta].
\end{equation}

\

Now we shall prove {\bf Claim I.1}. To show (\ref{e4}), we first note the following two facts:

\

\noindent{\bf Fact I.1}: For any $\epsilon>0$ and any $T>0$, there exists some $N(T)>0$ such that for any $n\geq N(T)$ and $\delta>0$ small enough, we
have
\begin{equation}\label{2.6}
\int_{D_\delta(x_n) \setminus D_{\delta
e^{-T}}(x_n)}(|\psi_{1n}|^4+|\psi_{2n}|^4)<\epsilon.
\end{equation}

To see this, recall that $(u_{1n}, u_{2n}, \psi_{1n},\psi_{2n})$ has no blow-up point in $\overline{D}_{\delta}\backslash \{p\}$ and $\psi_{jn}$ converges strongly to $\psi_j$ in $L_{loc}^4(\overline{D}_{\delta}\backslash \{p\})$. Therefore
$$
\int_{D_\delta(x_n)\backslash D_{\delta
e^{-T}}(x_n)}|\psi_{jn}|^{4} \rightarrow \int_{D_\delta\backslash
D_{\delta e^{-T}}}|\psi_j|^{4}.
$$ Hence we can also choose $\delta>0$ small enough such that, for any given $\epsilon>0$  and any given $T>0$, there exists an $N(T)>0$ big enough such that when $n\geq N(T)$,
$$
\int_{D_\delta(x_n)\backslash D_{\delta
e^{-T}}(x_n)}|\psi_{jn}|^{4}<\frac{\epsilon}{4}.$$
for $j=1,2$. Thus we get (\ref{2.6}).

\

\noindent{\bf Fact I.2}: For any small $\epsilon>0$ and any $T>0$, we may choose an $N(T)>0$ such that when $n\geq N(T)$ and $R$ is big enough,  we have
\begin{eqnarray*}
\int_{D_{\lambda_nRe^T}(x_n)\setminus
D_{\lambda_nR}(x_n)}(|\psi_{1n}|^4+|\psi_{2n}|^4)<  \epsilon,
\end{eqnarray*}

\

To verify this, we note that
\begin{eqnarray*}
\int_{D_{\lambda_nRe^T}(x_n)\setminus
D_{\lambda_nR}(x_n)}(|\psi_{1n}|^4+|\psi_{2n}|^4)
&=& \int_{D_{Re^T}\setminus
D_{R}}(|\widetilde{\psi}_{1n}|^4+|\widetilde{\psi}_{2n}|^4)\\
&\rightarrow&  \int_{D_{Re^T}\setminus
D_{R}}(|\widetilde{\psi}_1|^4+|\widetilde{\psi}_2|^4) <  \epsilon,
\end{eqnarray*}
when $R$ is big enough.

\

Next we will use {\bf Fact I.1 and Fact I.2} to prove (\ref{e4}).  We argue  by contradiction.  Suppose that there exist $\epsilon_0>0$ and a sequence $r_n\in [e\lambda_nR,\delta]$ such that
$$
\int_{D_{r_n}(x_n)\setminus
D_{e^{-1}r_n}(x_n)}(|\psi_{1n}|^4+|\psi_{2n}|^4)\geq \epsilon_0.
$$
Then it follows from {\bf Fact I.1 and Fact I.2} that   $\frac
{\delta}{r_n}\rightarrow +\infty$ and
$\frac{\lambda_nR}{r_n}\rightarrow 0$, in particular, $r_n\rightarrow
0$  and $\frac{\lambda_n}{r_n}\rightarrow 0 $ as $n\rightarrow +\infty$.

Rescaling again, we define
\begin{equation}\label{scal-1}
\left\{
\begin{array}{rcl}
v_{1n}(x)&=& u_{1n}(r_nx+x_n)+\ln r_n, \\
v_{2n}(x)&=& u_{2n}(r_nx+x_n)+\ln r_n, \\
\varphi_{1n}(x)&=& r_n^{\frac 12}\psi_{1n}(r_nx+x_n),\\
\varphi_{2n}(x)&=& r_n^{\frac 12}\psi_{2n}(r_nx+x_n).
\end{array}
\right.
\end{equation}
Then $(v_{1n},v_{2n},\varphi_{1n},\varphi_{2n})$ satisfies
\begin{equation*}
\left\{
\begin{array}{rcl}
-\triangle v_{1n}(x) &=& 2e^{2v_{1n}(x)}-e^{2v_{2n}(x)}- e^{v_{1n}}|\varphi_{1n}|^2+\frac 12 e^{v_{2n}}|\varphi_{2n}|^2 ,\\
-\triangle v_{2n}(x) &=& 2e^{2v_{2n}(x)}-e^{2v_{1n}(x)}- e^{v_{2n}}|\varphi_{2n}|^2+\frac 12 e^{v_{1n}}|\varphi_{1n}|^2 ,\\
\slashiii{D}\varphi_{1n}(x) &=& -e^{v_{1n}(x)}\varphi_{1n}(x), \\
\slashiii{D}\varphi_{2n}(x) &=& -e^{v_{2n}(x)}\varphi_{2n}(x),
\end{array}
\right.
\end{equation*}
in $D_{\frac{\delta}{r_n}}\setminus D_{\frac{\lambda_nR}{r_n}}$. Moreover
\begin{equation}\label{3.1}
\int_{(D_1\setminus D_{e^{-1}})}(|\varphi_{1n}|^4+|\varphi_{2n}|^4)\geq \epsilon_0,
\end{equation}
and
\begin{equation*}
\int_{\R^2}(e^{2v_{1n}}+e^{2v_{2n}}+|\varphi_{1n}|^4+|\varphi_{2n}|^4)\leq C.
\end{equation*}
For  $i=1,2$, we let $\Sigma_{v_{in}}$ be the blow up set of $v_{in}$  in $\overline {D_R\setminus D_{\frac 1R}}$ for any $R>0$. Then by Theorem \ref{BMTH}, we have the following 4 cases:

\

 (1). If $\Sigma_{v_{1n}}\cup \Sigma_{v_{2n}}\neq \emptyset$, then we will obtain a second bubble, i.e. a solution of \eqref{equ-1} or \eqref{equ-sl}  on $\R^2$, by applying the rescaling argument as in the beginning of the proof. Thus we get a contradiction
to the assumption that there is only one bubble at the blow-up point $p$.

\

(2). If $\Sigma_{v_{1n}}\cup \Sigma_{v_{2n}}= \emptyset$, and for any $R>0$, $v_{in}$ tends to $-\infty$ uniformly in $\overline {D_R\setminus D_{\frac 1R}}$ for all $i=1,2$, then there are solutions $\varphi_i, i=1,2$ satisfying
\begin{equation*}
\slashiii{D}\varphi_i = 0,  \text { in } \R^2\setminus\{0\}, \quad i=1,2
\end{equation*}
with bounded  energy $||\varphi_i||_{L^4(\R^2)} < \infty$, such that
$$\lim_{n\rightarrow \infty}   ||\varphi_{in} - \varphi_i||_{L^{4}(D_R\setminus D_{\frac 1R})}=0, \quad {\rm for \ any}\ R>0,  \quad i=1,2. $$
It is clear that each $\varphi_i$ can be conformally extended to a harmonic spinor on $\S^2$, which has to be identically $0$. This will contradict (\ref{3.1}).

\

(3). If $\Sigma_{v_{1n}}\cup \Sigma_{v_{2n}}= \emptyset$, and for any $R>0$, $v_{1n},v_{2n}$ are all uniformly bounded in $\overline {D_R\setminus D_{\frac 1R}}$,
then there is a solution $(v_1,v_2,\varphi_1,\varphi_2)$ satisfying
\begin{equation*}
\left\{
\begin{array}{rcl}
-\triangle v_1 &=& 2e^{2v_1}-e^{2v_2}-e^{v_1}|\varphi_1|^2+\frac 12 e^{v_2}|\varphi_2|^2,  \\
-\triangle v_2 &=& 2e^{2v_2}-e^{2v_1}-e^{v_2}|\varphi_2|^2+\frac 12 e^{v_1}|\varphi_1|^2, \\
\slashiii{D}\varphi_1 &=& -e^{v_1}\varphi_1, \\
\slashiii{D}\varphi_2 &=& -e^{v_2}\varphi_2,
\end{array}
\right.
\end{equation*}
in $ \R^2\setminus\{0\}$ and with finite energy
$\int_{\R^2}(e^{2v_1}+e^{2v_2}+|\varphi_1|^4+|\varphi_2|^4) < \infty$, such that
$$\lim_{n\rightarrow \infty}  \left (  ||v_{in} - v_i ||_{C^{2}(D_R\setminus D_{\frac 1R})} +
 ||\varphi_{in} - \varphi_i||_{C^{2}(D_R\setminus D_{\frac 1R})} \right ) =0,
$$ for any $R>0$ and $i=1,2$.

By the fact \eqref{3.1},  we know that  $\varphi_1$ and $\varphi_2$ cannot all be vanishing. So by Lemma \ref{spinor-energygap},   $$\int_{\R^2}e^{2v_1}+e^{2v_2} \geq \epsilon_1$$
and hence $(v_1,v_2,\varphi_1,\varphi_2)$ is a bubbling solution on $ \R^2\setminus\{0\}$. Moreover, by Theorem \ref{nonnegative-st},
\begin{equation*}
\int_{\R^2}e^{2v_i}-\frac 12 e^{v_i}|\psi_i|^2dx >  0,
\end{equation*}for $i=1, 2$.
Thus we get a contradiction to the assumption that there is only one bubble at the point $p$.

\

(4). If $\Sigma_{v_{1n}}\cup \Sigma_{v_{2n}}= \emptyset$, and for any $R>0$, $v_{in}$ is  uniformly bounded in $\overline {D_R\setminus D_{\frac 1R}}$ while $v_{jn}$ ($j\neq i$) tends to $-\infty$ uniformly in $\overline {D_R\setminus D_{\frac 1R}}$ ,
then there is a solution $(v_i,\varphi_i)$ satisfying in $ \R^2\setminus\{0\}$
\begin{equation*}
\left\{
\begin{array}{rcl}
-\triangle v_i &=& 2e^{2v_i}-e^{v_i}|\varphi_i|^2  \\
\slashiii{D}\varphi_i &=& -e^{v_i}\varphi_i, \\
\slashiii{D}\varphi_j &=& 0,
\end{array}
\right.
\end{equation*}
with finite energy
$\int_{\R^2}e^{2v_i}+|\varphi_i|^4+|\varphi_j|^4< \infty$, such that
$$\lim_{n\rightarrow \infty}  \left (  ||v_{in} - v_i ||_{C^{2}(D_R\setminus D_{\frac 1R})} +
 ||\varphi_{in} - \varphi_i||_{C^{2}(D_R\setminus D_{\frac 1R})} +
 ||\varphi_{jn} - \varphi_j||_{C^{2}(D_R\setminus D_{\frac 1R})} \right ) =0,
$$ for any $R>0$. It is easy to see that $\varphi_j$ can be conformally extended  to a harmonic spinor on $S^2$ and hence $\varphi_j\equiv 0$. By \eqref{3.1},  we know that $\varphi_i$ cannot be vanishing. So by Lemma \ref{spinor-energygap},
$$\int_{\R^2}e^{2v_i}\geq \epsilon_1$$
and hence $(v_i, \varphi_i)$ is a bubbling solution on $ \R^2\setminus\{0\}$. Moreover, by Theorem \ref{nonnegative-sl},
\begin{equation*}
\int_{\R^2}e^{2v_i}-\frac 12 e^{v_i}|\psi_i|^2dx >  0.
\end{equation*}
This is a contradiction to the assumption that there is only one bubble at the point $p$.

\

Next,  we make the following claim:

\

\noindent {\bf Claim I.2}:  We can separate $A_{\delta, R, n}$ into
finitely many parts
$$
A_{\delta, R, n}=\bigcup_{k=1}^{N_k}A_k$$ such that on each part
\begin{equation}\label{e5}
\int_{A_k}e^{2u_{1n}}+e^{2u_{2n}}\leq \frac{1}{4\Lambda^2}, \quad k=1,2,\cdots, N_k.
\end{equation}
Here  $N_k\leq N_0$ with $N_0$ being an uniform integer for all
$n$ large enough, $A_k=D_{r^{k-1}}\setminus D_{r^k}$, $ \lambda _nR=r^{N_k}<...<r^k<r^{k-1}<...< r^1<\delta=r^0 ,
$ and $\Lambda$ is the constant as in Lemma \ref{main-lamm}.

\

Since the energies of  $(u_{1n}, u_{2n})$ are uniformly bounded, the proof of the above claim is now standard, see the case of super-Liouville equations in \cite{JWZZ1} (page 308). Here we omit the details of the proof.

\

Now using {\bf Claim I.1} and {\bf Claim I.2}, we can show (\ref{e3}). The arguments are similar to the case of super-Liouville equations in \cite{JWZZ1}. For the sake of completeness, we provide the details.

For $i=1,2$ and for any small $0<\epsilon<1$, let $\delta$ be small enough and  $R$ and $ n$ be large enough. We apply Lemma \ref{main-lamm} to $(u_{in},\psi_{in})$ on each part $A_l$  and use \eqref{e5} to calculate
\begin{eqnarray*}
(\int_{A_l}|\psi_n|^4)^{\frac 14} &\leq & \Lambda
(\int_{D_{er^{l-1}}\setminus D_{e^{-1}r^l}}e^{2u_n})^{\frac
12}(\int_{D_{er^{l-1}}\setminus D_{e^{-1}r^l}}|\psi_n|^4)^{\frac
14}\\
&&+C(\int_{D_{er^{l-1}}\setminus D_{r^{l-1}}}|\psi_n|^4)^{\frac
14}+C(\int_{D_{r^{l}}\setminus D_{e^{-1}r^l}}|\psi_n|^4)^{\frac 14}\\
&\leq & \Lambda ((\int_{A_l}e^{2u_n})^{\frac 12}+\epsilon^{\frac
12}+\epsilon^{\frac 12})((\int_{A_l}|\psi_n|^4)^{\frac
14}+\epsilon^{\frac 14}+\epsilon^{\frac 14})+C\epsilon^{\frac 14}\\
&\leq &\Lambda (\int_{A_l}e^{2u_n})^{\frac
12}(\int_{A_l}|\psi_n|^4)^{\frac 14}+C(\epsilon^{\frac
14}+\epsilon^{\frac 12}+\epsilon^{\frac 34})\\
& \leq & \frac 12 (\int_{A_l}|\psi_n|^4)^{\frac
14}+C \epsilon^{\frac 14},
\end{eqnarray*}
which implies that
\begin{equation}\label{2.1} (\int_{A_l}|\psi_n|^4)^{\frac 14}\leq C \epsilon^{\frac 14}.
\end{equation}
Then, using Lemma \ref{main-lamm}, \eqref{e5}, (\ref{2.1}) and applying similar arguments, we have
\begin{equation}\label{2.2}
(\int_{A_l}|\nabla\psi_n|^{\frac 43})^{\frac 34}\leq
C\epsilon^{\frac 14}.
\end{equation}
Adding (\ref{2.1}) and (\ref{2.2}) on $A_l$, we conclude that
\begin{equation}\label{2.3}
\int_{A_{\delta,
R,n}}|\psi_n|^4+\int_{A_{\delta,R,n}}|\nabla\psi_n|^{\frac
43}=\sum_{l=1}^{N_0}\int_{A_l}|\psi_n|^4+|\nabla\psi_n|^{\frac
43}\leq C\epsilon^{\frac 13}.
\end{equation}
This proves (\ref{e3}) and finishes the proof of theorem in this case.

\

{\bf Case II.}  In this case, by applying the same arguments as in Case I., we can get (\ref{e3}). Here we omit the details of the proof.
\qed

\

\section{ Improved Brezis-Merle type concentration compactness}

\

In this section, by using the fact that the energies of the spinors on neck domains are converging to $0$, we can improve the Brezis-Merle type concentration compactness in Theorem \ref{BMTH}.

%\begin{lm}
% Let $(v,\varphi)$ satisfy in $ \R^2\setminus\{0\}$
%\begin{equation*}
%\left\{
%\begin{array}{rcl}
%-\triangle v &=& 2e^{v}-e^{\frac {v}2}|\varphi|^2 , \\
%\slashiii{D}\varphi &=& -e^{\frac {v_i}2}\varphi, \\
%\end{array}
%\right.
%\end{equation*}
%with finite energy
%$\int_{\R^2}e^{v}+|\varphi|^4< \infty$. Then there exist $s$ such that
%\begin{equation*}
%v(x) = -\frac{s}{2\pi}\ln|x|+h(x)\quad \text { near } 0. \\
%\end{equation*}
%Moreover, $h(x)$ is bounded near $0$ or near $\infty$, and
% \begin{equation*}
%|\nabla h(x)| \leq \frac {c}{|x|^{1-\delta}}\quad \text { near } 0 \\
%\end{equation*}for some small $\delta>0$.
%\end{lm}

\

 \noindent{\bf Proof of Theorem \ref{Pblowupb}:} In view of Theorem \ref{BMTH}, we shall argue by contradiction. Assume that the conclusion is false, i.e.  both $u_{1n}$ and $u_{2n}$ are uniformly bounded on any compact subset of $M\backslash (\Sigma_{u_{1n}}\cup  \Sigma_{u_{2n}})$. Then we know that $(u_{1n}, u_{2n}, \psi_{1n}, \psi_{2n})$ converges strongly on any compact subset of
$M\backslash   (\Sigma_{u_{1n}}\cup  \Sigma_{u_{2n}})$ to some limit solution $(u_1,u_2,\psi_1,\psi_2)$ of \eqref{equ-1} in $M\backslash   (\Sigma_{u_{1n}}\cup  \Sigma_{u_{2n}})$
with finite energy
$$\int_{M}(e^{2u_1}+e^{2u_2}+|\psi_1|^4+|\psi_2|^4)\leq C.$$

Now, let $x_0\in \Sigma_{u_{1n}}\cup \Sigma_{u_{2n}}$ and $R>0$ small so that $x_0$ is the only point of $\Sigma_{u_{1n}}\cup \Sigma_{u_{2n}}$ in $\bar{B}_R(x_0)$. We can assume that $u_{in}$ and $|\psi_{in}|$ are uniformly bounded in $L^{\infty}(\partial B_R(x_0))$ for $i=1,2$. Consequently, we have
$$
\left|\frac 23u_{1n}+\frac 13u_{2n}\right|_{L^{\infty}(\partial B_R(x_0))}\leq C.
$$
for some uniform constant $C>0$.

As  in the proof of Theorem \ref{engy-indt}, we rescale $(u_{1n},u_{2n},\psi_{1n},\psi_{2n})$ near $x_0$.
Without loss of generality, we choose $x_n\in B_{R}(x_0)$ such that
$$u_{1n}(x_n)=\max_{\bar{B}_{R}(x_0)}\max \{u_{1n}(x),u_{2n}(x)\}.$$
Then we have $x_n\rightarrow x_0$
and
$u_{1n}(x_n)\rightarrow +\infty$. Let $\lambda_n =e^{-u_{1n}(x_n)}\rightarrow 0$. Denote
\begin{equation*}
\left\{
\begin{array}{rcl}
\widetilde{u}_{1n}(x)&=&u_{1n}(\lambda_nx+x_n)+\ln {\lambda_n}\\
\widetilde{u}_{2n}(x)&=&u_{2n}(\lambda_nx+x_n)+\ln {\lambda_n}\\
\widetilde{\psi}_{1n}(x)&=&\lambda^{\frac 12}_n\psi_{1n}(\lambda_n x+x_n)\\
\widetilde{\psi}_{2n}(x)&=&\lambda^{\frac 12}_n\psi_{2n}(\lambda_nx+x_n)
\end{array}
\right.
\end{equation*}
for any $x\in B_{\frac {R}{2\lambda_n}}(0)$. Then, we consider the convergence of $(\widetilde{u}_{1n},\widetilde{u}_{2n},\widetilde{\psi}_{1n},\widetilde{\psi}_{2n})$.

\

\noindent {\bf Case I:} By passing to a subsequence, $(\widetilde{u}_{1n},\widetilde{u}_{2n},\widetilde{\psi}_{1n},\widetilde{\psi}_{2n})$ converges in  $C^2_{loc}(\R^2)\times C^2_{loc}(\R^2)\times  C^2_{loc}(\Gamma(\Sigma \R^2)) \times  C^2_{loc}(\Gamma(\Sigma \R^2))$ to some $(\widetilde
u_1,\widetilde u_2, \widetilde \psi_1,\widetilde \psi_2)$ with
\begin{eqnarray}\label{int-value-st}
\int_{\R^2}(e^{2\widetilde{u}_i}-\frac 12 e^{\widetilde {u}_i}|\widetilde{\psi}_i|^2)>2\pi,
\end{eqnarray}
for $i=1,2$, and $(\widetilde u_1,\widetilde u_2, \widetilde \psi_1,\widetilde \psi_2)$ is  a bubbling solution of \eqref{equ-1} on $\R^2$.

\
\

\noindent  {\bf Case II:}   By passing to a subsequence, $(\widetilde{u}_{1n},\widetilde{\psi}_{1n} , \widetilde{\psi}_{2n}) $ converges in  $C^2_{loc}(\R^2)\times C^2_{loc}(\Gamma(\Sigma \R^2))$ to some $(\widetilde u_1, \widetilde \psi_1,\widetilde \psi_2)$ and $\widetilde{u}_{2n}$ converges to $-\infty $ uniformly in any compact subset in $\R^2$. Moreover
\begin{eqnarray}\label{int-value-sl}
\int_{\R^2}(e^{2\widetilde{u}_1}-\frac 12 e^{\widetilde {u}_1}|\widetilde{\psi}_1|^2) =2\pi,
\end{eqnarray}
and $(\widetilde u_1,\widetilde \psi_1 )$ is  a bubbling solution of the super-Liouville equation \eqref{equ-sl} on $\R^2$.

\

By the super-Toda system  \eqref{equ-1}, we have
\begin{equation}
-\triangle  (\frac 23u_{1n}+\frac 13u_{2n} ) =   e^{2u_{1n}}-\frac{1}{2}e^{u_{1n}}|\psi_{1n}|^2-R_g,  \quad \text { in } B_R(x_0).
\end{equation}
Let $w_n$ satisfy
\begin{equation*}
\left\{
\begin{array}{rlll}
-\triangle w_n &=& e^{2u_{1n}}-\frac 12 e^{u_{1n}}|\psi_{1n}|^2-R_g, & ~~~\text { in } B_R(x_0),\\
w_n &=& -C, &~~~ \text { on } \partial B_R(x_0).
\end{array}
\right.
\end{equation*}
Then, by the maximum principle,  we have $w_n\leq \frac 23u_{1n}+\frac 13u_{2n}$ and
\begin{equation}\label{6.1}
\int_{B_R(x_0)}e^{w_{n}}\leq \int_{B_R(x_0)}e^{\frac 23u_{1n}+\frac 13u_{2n}}\leq C.
\end{equation}
On the other hand, by using similar arguments as in \cite{BM}, we have $w_n\rightarrow w$ uniformly on any compact subsets of $B_R(x_0)\backslash\{x_0\}$ and $w$ satisfies
\begin{equation*}
\left\{
\begin{array}{rlll}
-\triangle w &=& \mu, & ~~~\text { in } B_R(x_0),\\
w&=& -C, &~~~ \text { on } \partial B_R(x_0).
\end{array}
\right.
\end{equation*}

\

Then for $\delta\in(0, R)$ small enough, $L>0$ large enough  and $n$ large enough, we have
\begin{eqnarray}\label{almost2pi}
&& \int_{B_{\delta}(x_0)}(e^{2u_{1n}}-\frac 12 e^{u_{1n}}|\psi_{1n}|^2-R_g)   \\
&=&\int_{B_{\lambda_n L}(x_n)}(e^{2u_{1n}}-\frac 12 e^{u_{1n}}|\psi_{1n}|^2)+ \int_{B_\delta(x_0)\backslash B_{\lambda_nL}(x_n)}(e^{2u_{1n}}-\frac 12 e^{u_{1n}}|\psi_{1n}|^2) - \int_{B_{\delta}(x_0)}R_g \nonumber \\
&\geq & \int_{B_{L}(0)}(e^{2\widetilde{u}_{1n}}-\frac 12 e^{\widetilde{u}_{1n}}|\widetilde{\psi}_{1n}|^2)-\frac 12 \int_{B_\delta(x_0)\backslash B_{\lambda_nL}(x_n)}e^{u_{1n}}|\psi_{1n}|^2- \int_{B_{\delta}(x_0)}R_g \nonumber
\end{eqnarray}

Let $n\rightarrow \infty$ in (\ref{almost2pi}), By \eqref{int-value-st} and  \eqref{int-value-sl}, we can apply the fact from Theorem \ref{engy-indt} that the neck energy of the spinor field $\psi_{1n}$ is converging to zero (when multiple bubbles occur, we need to decompose $B_\delta(x_0)\backslash B_{\lambda_nL}$ further into bubble domains and neck domains and use the fact that  the integral quantities defined in \eqref{entire-energy-sl} and \eqref{entire-energy-st} for bubbling solutions of \eqref{equ-1} and \eqref{equ-sl}  on $\R^2\setminus \{0\}$ are all nonnegative, see Theorem \ref{nonnegative-st} and  Theorem \ref{nonnegative-sl}) to obtain
$$
\lim_{n\rightarrow\infty}\int_{B_{\delta}(x_0)}(e^{u_{1n}}-\frac 12 e^{u_{1n}}|\psi_{1n}|^2-R_g)\geq 2\pi+o(1),
$$
where $ o(1)\rightarrow 0$ as $L\rightarrow \infty$ and $\delta\rightarrow 0$.  This implies that $\mu\{x_0\}\geq 2\pi$ and $\mu\geq 2\pi \delta_{x_0}$, in particular we have
$$
w(x)\geq \log\frac{1}{|x-x_0|}+O(1), ~~~\text { as } x\rightarrow x_0 .$$
Thus we obtain
$$
\int_{B_R(x_0)}e^{2w}\geq \int_{B_\delta(x_0)}\frac{C}{|x-x_0|^2}=\infty.
$$
However, from (\ref{6.1}) and by Fatou's Lemma we have $\int_{B_R(x_0)}e^{2w}<\infty$.
Thus we get a contradiction. By Theorem \ref{BMTH}, we know that at least one of  $u_{1n}$ and $u_{2n}$ tends to $-\infty$ uniformly in any compact subset of $M\backslash   (\Sigma_{u_{1n}}\cup  \Sigma_{u_{2n}})$.

By \eqref{almost2pi} and the subsequent arguments, it is easy to see that  $m_1(p) > 2\pi, m_2(p) > 2\pi$ by \eqref{int-value-st} in {\bf Case I} and $m_1(p)\geq 2\pi, m_2(p)\geq 0$ by \eqref{int-value-sl} in {\bf Case II}.

Similarly, if we rescale $(u_{1n},u_{2n},\psi_{1n},\psi_{2n})$ near $x_0$ and choose $x_n\in B_{R}(x_0)$ such that
$$u_{2n}(x_n)=\max_{\bar{B}_{R}(x_0)}\max \{u_{1n}(x),u_{2n}(x)\}.$$
Then, we will have $m_1(p) > 2\pi, m_2(p) > 2\pi$ or  $m_2(p)\geq 2\pi, m_1(p)\geq0$.  This completes the proof.
\qed

\

\end{document}